\newcommand{\ddiv}{{\mbox{div}}}
\newcommand{\R}{{\mathbb R}}
\newcommand{\sS}{{\mathbb S}}
\newcommand{\Z}{{\mathbb Z}}
\newcommand{\cM}{{\mathcal M}}
\newcommand{\cL}{{\mathcal L}}
\newcommand{\cS}{{\mathcal S}}
\newcommand{\cH}{{\mathcal H}}
\newcommand{\cN}{{\mathcal N}}
\newcommand{\e}{\varepsilon}
\newcommand{\dist}{\operatorname{dist}}
\newcommand{\p}{\partial}
\newcommand{\ra}{\rightarrow}
\newcommand{\norm}[1]{\left\Arrowvert {#1}\right\Arrowvert}
\theoremstyle{plain}
\newtheorem{theorem}{Theorem}[section]
\newtheorem{corollary}[theorem]{Corollary}
\newtheorem{lemma}[theorem]{Lemma}
\theoremstyle{definition}
\newtheorem{definition}{Definition}
\theoremstyle{remark}
\newtheorem*{case*}{Case}
\newtheorem*{claim*}{Claim}
\newtheorem{notation}{Notation}
\newtheorem{remark}{Remark}
\numberwithin{equation}{section}
\title [Nodal Sets for ``Broke'' PDEs]{Nodal Sets for ``Broken'' Quasilinear PDEs}
\author{Sunghan Kim}
\address{Department of Mathematical Sciences, Seoul National University, Seoul 08826, Korea}
\email{sunghan290@snu.ac.kr}
\author{Ki-Ahm Lee}
\address{Department of Mathematical Sciences, Seoul National University, Seoul 08826, Korea
\& Korea Institute for Advanced Study, Seoul 02455, Korea}
\email{kiahm@snu.ac.kr}
\author{Henrik Shahgholian}
\address{Department of Mathematics, Royal Institute of Technology,
  100~44  Stockholm, Sweden}
\email{henriksh@kth.se}
\thanks{S. Kim has been supported by National Research Foundation of Korea (NRF) grant funded by the Korean government (NRF-2014-Fostering Core Leaders of the Future Basic Science Program). K.-A. Lee has been supported by NRF grant funded by the Korean government (MSIP) (No. NRF-2015R1A4A1041675). K.-A. Lee also holds a joint appointment with the Research Institute of Mathematics of Seoul National University. H. Shahgholian has been supported in part by Swedish Research Council.
}
\begin{document}

\begin{abstract}
We study the local  behavior of the nodal sets of the solutions to elliptic quasilinear equations with nonlinear conductivity part,
\begin{equation*}
\ddiv(A_s(x,u)\nabla u)=\ddiv\vv{f}(x),
\end{equation*} 
where $A_s(x,u)$ has ``broken'' derivatives of order $s\geq 0$, such as 
\begin{equation*}
A_s(x,u) = a(x) + b(x)(u^+)^s,
\end{equation*}
with $(u^+)^0$ being understood as the characteristic function on $\{u>0\}$. The vector $\vv{f}(x)$ is assumed to be $C^\alpha$ in case $s=0$, and $C^{1,\alpha}$ (or higher) in  case $s>0$.

Using geometric methods, we prove almost complete  results (in analogy with  standard PDEs) concerning the behavior of the nodal sets. More exactly, we show that the nodal sets, where solutions have (linear) nondegeneracy, are locally smooth graphs.  Degenerate points are shown to have  structures that follow the lines of arguments as that of  the nodal sets for harmonic functions, and general PDEs.
\end{abstract}

\maketitle

\tableofcontents

%
%

\section{Introduction}

The  regularity issue of nodal sets of solutions to ``broken'' elliptic quasilinear equations is the subject matter of this paper. The model equation of concern has the form
\begin{equation*}
\ddiv(A_s(x,u)\nabla u)=\ddiv\vv{f}(x),
\end{equation*} 
where $A_s(x,u)$ will  have a structure of type 
\begin{equation*}
A_s(x,u) =a (x) +b(x) (u^+)^s,
\end{equation*}
with $s\geq 0$.  The particular case, $s=0$, is understood as $(u^+)^0 = H(u)$ with $H$ being the Heaviside function. Since our interest is in the local behavior of solutions, we shall in general not specify any domains or boundary values, unless it becomes necessary. Nevertheless, the readers may consider this equation in the unit ball  (or their favorite domain) with reasonable boundary values. 


\subsection{Background}

The study of nodal sets for elliptic equations has been of great interest to the fields of geometry and analysis. The study mainly involves local finiteness of nodal set in $(n-1)$-dimensional Hausdorff measure, the structure theorem of singular set $\{u=|\nabla u|=0\}$ and its local finiteness in $(n-2)$-dimensional Hausdorff measure. Recent development in this framework can be found in \cite{CF}, \cite{L}, \cite{H}, \cite{HS}, \cite{HHN}, \cite{HHHN}, \cite{B}, \cite{A}, \cite{CNV} and the references therein.

Let us remark that Lipschitz or higher regularity of the coefficients was essential in the existing literature, which is largely because of the necessity of a rigid control over vanishing order. Indeed, it is shown in \cite{GL} that vanishing order is bounded locally uniformly in general when coefficients are Lipschitz; also see \cite{M} and \cite{P} for counterexamples in this regard when coefficients are only H\"{o}lder continuous.

This paper, however, provides a structure theorem for the nodal sets of the solutions to ``broken'' partial differential equations (PDEs in the sequel), and analyze their lower dimensional Hausdorff measure, where the coefficient $A_s(x,u)$ experience discontinuous ($s=0$) or H\"{o}lder continuous ($0<s<1$) conductivity jump; see Theorem \ref{theorem:main-deg} and Theorem \ref{theorem:hauss-h}. In any case, the associated regularity is strictly weaker than Lipschitz continuity. To the best of the authors' knowledge, this is the first work concerning non-Lipschitz coefficients. 

It is worthwhile to mention that the ``break'' in conductivity appears in several applications, related to composite materials.  The peculiarity of our model is that it deals with materials that undergo phase transitions, due to certain thresholds of physical/chemical quantity, that in turn implies jump in conductivity. We refer to our previous work \cite{KLS} in this regard. This contrasts to the artificiality of the counterexamples constructed in \cite{M} and \cite{P} for non-Lipschitz coefficients, asking for a more sophisticated analysis on nodal sets arising from discontinuous (or less regular) phenomena in the real world. 


\subsection{Heuristic Arguments}

In this paper, we prove the optimal regularity of solutions and the (higher) regularity of the nodal set around nondegenerate points. More exactly, we obtain the optimal $C^{0,1}$ regularity when $x\mapsto A_s(x,u(x))$ is only bounded (the case $s=0$), and the optimal $C^{m,1}$ regularity when $x\mapsto A_s(x,u(x))$ is $C^{m-1,1}$ (the case $s=m$ for some integer $m\geq 1$); see Theorem \ref{theorem:main} (i) and respectively Theorem \ref{theorem:reg-h} (ii). To the best of our knowledge, such a regularity theory (for solutions) is considered only in our previous work \cite{KLS}, where the authors treated the same problem with zero right hand side. Though the techniques in the authors' previous paper can be generalized to the case of bounded right hand side, they fail to hold in the framwork of the current paper. 
 
At technical level, our approach is based on the idea of the Schauder theory, that is, the approximation of solutions with polynomials. Indeed, by simple transformation of the equation, at a given point $z \in \partial \{u>0\}$, through  $v_z(x)=\phi_z (u (x) ) $ for some $\phi_z$\footnote{The exact form of this function is $\phi_z(u)(x)=a_+(z)u^+(x)-a_-(z)u^-(x)$. See Definition \ref{definition:phi} for more details.} that is naturally given by the problem we arrive at $\Delta v_z(x) = g_z(x)$, where $g_z(x) $ now is smooth enough at the point $z$, but not (necessarily) at other free boundary points.  This in turn gives us that $v_z(x)$ is  $C^{1,\beta}$ for $\beta < \alpha$ at the point $z$, with uniform norms. Since the  norms are uniform with respect to any   free boundary point, we can conclude the Lipschitz regularity of the solution $u$ to our problem. A further, and straightforward conclusion is that the nondegenerate points of the  free boundary is squeezed between $C^{1,\beta}$ graphs. This implies the regularity of the free boundary at nondegenerate points (see Theorem \ref{theorem:main}).  

When a free boundary point is degenerate, that is, when the solution vanishes with order greater than $1$, we use a bootstrap argument to prove that the vanishing order is indeed an integer (Lemma \ref{lemma:u-d}). This result extends the integral vanishing order studied in \cite{B} and \cite{H} into the regime of broken PDEs. Now that we have a control on the vanishing order of the set of degenerate points, we follow the idea of \cite{H} and \cite{HS} and derive the structure theorem and the Hausdorff dimension of the singular set.

It should be stressed that Almgren's frequency formula does not hold for the broken PDEs under our consideration. Roughly speaking, this is because the governing PDE cannot be derived by the variational approach. Such a drawback prevents us from achieving the local finiteness of the $(n-1)$-dimensional Hausdorff measure of the entire nodal set, $\{u=0\}$, unless the coefficients $a_+$ and $a_-$ are $C^{1,1}$. In the case that $a_+$ and $a_-$ are $C^{1,1}$, however, we observe that the solution $u$ can be transformed to a function that solves an elliptic equation with bounded lower order terms with the nodal set being the same as that of $u$. Hence, the classical theory can be applied to the transformed one, and the desired result (Theorem \ref{theorem:main-deg} (ii)) follows.

Let us address some interesting features on the case $s<0$, which is not considered here. Even the existence of solutions seems to be a difficult problem, when the coefficients $a$ and $b$ are not constants. To the best of the authors' knowledge, this 
problem has not been studied in the existing literature. We invite the readers to have a look at  Remark \ref{remark:s<0} for further discussions. 



\subsection{Outline}

The paper is organized as follows. In Section \ref{section:zero}, we study the broken PDEs when the order of the ``break'' is $0$; that is, when the operator $A_s(x,u)$ is given with $s=0$. The main results concerning these equations are listed in Subsection \ref{subsection:elliptic}. Subsection \ref{subsection:optimal reg} and Subsection \ref{subsection:C1a-fb} are devoted to studying the nondegenerate part of the nodal set and proving Theorem \ref{theorem:main}. Subsection \ref{subsection:deg} and Subsection \ref{subsection:hausdorff} concern the structure of the singular part of the nodal set, and we prove Theorem \ref{theorem:main-deg}. 

In Section \ref{section:higher}, we study the broken PDEs when the order of the ``break'' is positive. Subsection \ref{subsection:reg-h} is devoted to studying the optimal regularity of solutions and proving Theorem \ref{theorem:reg-h}. In Subsection \ref{subsection:hausdorff-h} we are concerned with the local finiteness of the $(n-1)$-dimensional Hausdorff measure of the nodal set and prove Theorem \ref{theorem:hauss-h}.

In Appendix, we state and prove some variants of the Schauder theory and 
Almgren's frequency formula, which is used for the exceptional case $a_+,a_-\in C^{1,1}$.
 We contain this part for the reader's convenience.

%
%

\section{Broken PDEs with Order $s=0$}\label{section:zero}


\subsection{Problem Setting and Main Results}\label{subsection:elliptic}

Let $u$ be a weak solution of 
\begin{equation}\tag{$L$}\label{eq:main}
\ddiv(A(x,u)\nabla u)=\ddiv\vv{f}(x)\quad\text{in }B_1,
\end{equation}
where $\vv{f}$ is a vector-valued function and 
\begin{equation}\label{eq:simple}
A(x,u)=a_+(x)H(u)+a_-(x)(1-H(u)),
\end{equation}
with $H$ being the Heaviside function and $a_+$ and $a_-$ being real-valued functions satisfying 
\begin{equation}\label{eq:ellip}
\lambda\leq a_\pm(x)\leq \frac{1}{\lambda}\quad\text{and}\quad \left|\vv{f}(x)\right|\leq\frac{1}{\lambda}\quad (x\in \bar{B}_1),
\end{equation}
with a constant $0<\lambda<1$ and 
\begin{equation}\label{eq:cont}
\max\left\{|a_\pm(x)-a_\pm(y)|,\left|\vv{f}(x)-\vv{f}(y)\right|\right\}\leq \omega_0|x-y|^\alpha\quad (x,y\in \bar{B}_1),
\end{equation}
for some $\omega_0>0$ and $0<\alpha<1$.

Let us first introduce some notations in regards to vanishing order of solutions at free boundary points.
\begin{definition}\label{definition:fb} Let $v$ be a continuous function. 

\begin{enumerate}[(i)]
\item By $\Gamma(v)$ we denote $\{x\in B_1:v(x)=0\}$. By $\Gamma_*(v)$ denote the subset of $\Gamma(v)$ consisting of vanishing points with finite orders, i.e.,
\begin{equation*}
\Gamma_*(v)=\left\{z\in\Gamma(v):\limsup_{x\ra z}\frac{|v(x)|}{|x-z|^d}=\infty\text{ for some $d>0$}\right\}.
\end{equation*}
\item $\cN(v)$ is defined by the class of vanishing points with order 1, namely, 
\begin{equation*}
\cN(v)=\left\{z\in\Gamma(v):\limsup_{x\ra z}\frac{|v(x)|}{|x-z|}>0\right\}.
\end{equation*}
The points in $\cN(v)$ will be called {\it nondegenerate} points (of $v$). In particular, whenever $z\in\cN(v)$, we will call the associated positive number, $\limsup_{x\ra z}|x-z|^{-1}|v(x)|$, the nondegeneracy constant of $v$ at $z$.

\item By $\cS(v)$ we denote $\Gamma_*(v)\setminus\cN(v)$, which contains all vanishing points with finite orders strictly larger than 1. Elements in $\cS(v)$ will be called  (polynomially) degenerate points. 
\end{enumerate}
\end{definition}

It is not hard to check that problem \eqref{eq:main} admits a weak solution, given a reasonable boundary data. For a detailed proof, see \cite{KLS}. 

\begin{definition}\label{definition:phi} Given $z\in\Gamma(u)$, define a function $\phi_z(u)$ by
\begin{equation*}
\phi_z(u)(x)=a_+(z)u^+(x)-a_-(z)u^-(x).
\end{equation*}
We will simply denote this function by $\phi(u)$ when $z=0$. 
\end{definition}

\begin{remark}\label{remark:phi}
Observe that $\phi_z(u)$ is a weak solution to 
\begin{equation}\label{eq:phi}
\Delta \phi_z(u)=\ddiv(\vv{f}(x)+\sigma_z(x)\nabla u)\quad\text{in }B_1,
\end{equation}
where 
\begin{equation}\label{eq:sigma}
\sigma_z(x)=(a_+(z)-a_+(x))H(u)+(a_-(z)-a_-(x))(1-H(u)).
\end{equation}
It is important to see that 
\begin{equation}\label{eq:sigma-Ca}
|\sigma_z(x)|\leq\omega_0|x-z|^\alpha \quad (x\in \bar{B}_1).
\end{equation} 
\end{remark}

The expert readers may already have noticed that the definition of $\phi_z(u)$, suggests the approach of  freezing method as in classical PDEs, and pointwise  Schauder theory.
Hence $\phi_z(u)$ plays a central role in proving our main theorems, together with the pointwise regularity for elliptic PDEs presented in Appendix \ref{appendix:ptwise}. 

The first result of our paper is the Lipschitz regularity of solutions and the $C^{1,\alpha}$ regularity of free boundaries near nondegenerate points. 

\begin{theorem}\label{theorem:main} Let $u$ be a nontrivial weak solution to \eqref{eq:main} with $|u|\leq 1$ in $B_1$. Then the following hold.
\begin{enumerate}[(i)]
\item $u\in C^{0,1}(B_{1/2})$ with uniform norm
\begin{equation*}
\norm{\nabla u}_{L^\infty(B_{1/2})}\leq C,
\end{equation*}
where $C$ is a positive constant depending only on $n$, $\lambda$, $\omega_0$ and $\alpha$. 
\item $\cN(u)$ is locally a $C^{1,\alpha}$ graph in the sense that for any $z\in\cN(u)$, there exist $r_0>0$ and $C_0>0$, which depend only on $n$, $\lambda$, $\omega_0$, $\alpha$ and the nondegeneracy constant of $u$ at $z$ such that
\begin{equation*}
|\nu(x)-\nu(y)|\leq C_0|x-y|^\alpha\quad (x,y\in B_{r_0}(z)\cap\Gamma(u)),
\end{equation*}
where $\nu$ is the unit normal of $\Gamma(u)$ pointing inward $\{u>0\}$.
\end{enumerate}
\end{theorem}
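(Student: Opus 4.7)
The plan is to exploit the transformation $v_z := \phi_z(u)$ of Definition \ref{definition:phi}, which by Remark \ref{remark:phi} satisfies
\begin{equation*}
\Delta v_z = \ddiv(\vv{F}_z), \qquad \vv{F}_z := \vv{f} + \sigma_z\,\nabla u,
\end{equation*}
with $\sigma_z(z)=0$ and $|\sigma_z(x)|\le \omega_0|x-z|^\alpha$. Since $\lambda|u(x)|\le |v_z(x)|\le \lambda^{-1}|u(x)|$, any pointwise growth or affine information on $v_z$ at $z$ transfers directly to $u$. The analytic engine is the pointwise Schauder theory of Appendix \ref{appendix:ptwise}: if $\vv{F}_z$ is pointwise H\"older at $z$ with a uniform seminorm, then $v_z$ admits an affine approximation at $z$ with error of order $|x-z|^{1+\alpha}$, with constants depending only on the pointwise H\"older seminorm of the data.

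For part (i), I would first record a baseline $u\in C^{0,\beta_0}$ from the De Giorgi-Nash-Moser theorem applied to \eqref{eq:main}, and then run a finite bootstrap on the growth exponent at $\Gamma(u)$. Suppose $|u(x)|\le M|x-z|^\gamma$ uniformly in $z\in\Gamma(u)$ for some $\gamma\in(0,1)$. Classical interior Schauder in the ball $B_{\dist(x,\Gamma(u))}(x)$ --- where $A(\cdot,u)\equiv a_\pm(\cdot)\in C^\alpha$ has a fixed sign --- gives $|\nabla u(x)|\le CM\,\dist(x,\Gamma(u))^{\gamma-1}$. Feeding this into $|\sigma_z(x)\nabla u(x)|\le \omega_0|x-z|^\alpha|\nabla u(x)|$ produces a pointwise H\"older bound $|\vv{F}_z(x)-\vv{f}(z)|\le C|x-z|^{\alpha+\gamma-1}$ (supplemented by a Morrey/Campanato step in the regime $\alpha+\gamma\le 1$), and the Appendix-\ref{appendix:ptwise} pointwise Schauder improves the growth exponent of $v_z$, and hence of $u$, to $\min(1,\alpha+\gamma)$. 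A finite number of iterations reaches $\gamma=1$ with $z$-independent constants; combining this linear growth on $\Gamma(u)$ with interior Schauder away from $\Gamma(u)$ then yields $\|\nabla u\|_{L^\infty(B_{1/2})}\le C$.

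For part (ii), once $\nabla u$ is bounded we have $|\vv{F}_z(x)-\vv{f}(z)|\le C|x-z|^\alpha$ at every $z\in\Gamma(u)$ with uniform $C$. Pointwise Schauder applied to $v_z$ at $z$ then produces an affine expansion
\begin{equation*}
|v_z(x) - p_z\cdot(x-z)| \le C|x-z|^{1+\alpha}, \qquad |p_z|\le C.
\end{equation*}
For $z\in\cN(u)$ the nondegeneracy hypothesis forces $|p_z|\ge c>0$ (comparable to the nondegeneracy constant, up to the factors $a_\pm(z)\in[\lambda,\lambda^{-1}]$), so near $z$ the nodal set $\Gamma(u)=\Gamma(v_z)$ is squeezed into the cusp $\{|(x-z)\cdot\nu(z)|\le C'|x-z|^{1+\alpha}\}$ about the hyperplane orthogonal to $\nu(z):=p_z/|p_z|$; in particular $\cN(u)$ is open in $\Gamma(u)$ near $z$. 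The H\"older estimate on $\nu$ follows by the standard comparison: evaluating the expansion of $v_{z_1}$ at $z_2\in\Gamma(u)\cap B_{r_0}(z_1)$ (and vice versa) gives $|p_{z_1}\cdot(z_2-z_1)|\le C|z_1-z_2|^{1+\alpha}$, whence $|\nu(z_1)-\nu(z_2)|\le C_0|z_1-z_2|^\alpha$.

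The hard part will be the bootstrap in (i). The pointwise Schauder input demands H\"older control of $\vv{F}_z$ at $z$, which itself requires a bound on $\nabla u$ --- precisely what we are trying to prove. Breaking this circularity by iterating the growth exponent while keeping all constants uniform both in the free-boundary point $z$ and in the iteration index is the delicate step on which both conclusions rest; part (ii) is then essentially a clean application of the pointwise expansion once $\nabla u$ has been tamed.
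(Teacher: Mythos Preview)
Your strategy for part (ii) is essentially what the paper does, but your bootstrap for part (i) has a real gap. From $|u(x)|\le M|x-z|^\gamma$ uniformly in $z\in\Gamma(u)$ (equivalently $|u(x)|\le M d(x)^\gamma$ with $d(x)=\dist(x,\Gamma(u))$), interior Schauder in the one-phase ball indeed gives $|\nabla u(x)|\le C M\,d(x)^{\gamma-1}$. But your next claim, $|\sigma_z(x)\nabla u(x)|\le C|x-z|^{\alpha+\gamma-1}$, does not follow: since $\gamma-1<0$ and $d(x)$ can be arbitrarily smaller than $|x-z|$ (whenever $x$ lies close to a \emph{different} point of $\Gamma(u)$), the product $|x-z|^\alpha d(x)^{\gamma-1}$ is not controlled by $|x-z|^{\alpha+\gamma-1}$. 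No ``Morrey/Campanato step'' repairs the pointwise estimate, and without a priori information on the geometry or measure of $\Gamma(u)$ you cannot integrate $d(x)^{p(\gamma-1)}$ either.

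The paper avoids this in two ways. First, it does not bootstrap: a single blow-up/compactness argument (Lemma \ref{lemma:Ca}) yields $|u(x)|\le C_\beta|x-z|^\beta$ for \emph{every} $\beta<1$ at once. Second, it never uses pointwise gradient bounds near $\Gamma(u)$; instead Caccioppoli gives $\bigl(\fint_{B_r(z)}|\nabla u|^2\bigr)^{1/2}\le C r^{-\delta}$ (Corollary \ref{corollary:more-W12}), this is upgraded to $L^p$ for $p>n$ via an $L^2$-BMO estimate and John--Nirenberg (Lemmas \ref{lemma:L2-BMO} and \ref{lemma:more-W1p}), and only then is the \emph{integral} bound $\bigl(\fint_{B_r(z)}|\sigma_z\nabla u|^p\bigr)^{1/p}\le C r^{\alpha-\delta}$ fed into the pointwise Schauder Lemma \ref{lemma:C1a}. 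Your bootstrap can be salvaged by replacing the interior-Schauder pointwise bound with Caccioppoli at each step, but you would still need the $L^2\to L^p$ upgrade before invoking Lemma \ref{lemma:C1a}, which requires $p>n$.

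A smaller point in (ii): from $|p_{z_1}\cdot(z_2-z_1)|\le C|z_1-z_2|^{1+\alpha}$ and its symmetric counterpart you only learn that both $\nu(z_1)$ and $\nu(z_2)$ are nearly orthogonal to $z_2-z_1$; in dimension $n\ge 3$ this does not by itself force $|\nu(z_1)-\nu(z_2)|$ to be small (the normals could rotate about the axis $z_2-z_1$). The paper closes this by selecting $n-1$ nearby points of $\Gamma(u)$ spanning the tangent directions; see the proof of Theorem \ref{theorem:main} (ii).
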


The second result concerns a structure theorem for degenerate free boundary points. Here we denote by $\dim_\cH(S)$ the Hausdorff dimension of a set $S$, and by $\cH^r(S)$ the $r$-dimensional Hausdorff measure of $S$.

\begin{theorem}\label{theorem:main-deg} Suppose that $u$ is a nontrivial weak solution to \eqref{eq:main} in $B_1$, when $\vv{f}\equiv 0$.  
\begin{enumerate}[(i)]
\item Suppose that the vanishing order of $u$ is finite in $B_1$. Then
\begin{equation*}
\cS(u)=\displaystyle\bigcup_{j=0}^{n-2}\cM^j(u),
\end{equation*} 
where $\cM^j(u)$ is contained in a countable union of $j$-dimensional $C^1$ manifolds, for each $j=0,1,\cdots,n-2$. In particular, 
\begin{equation*}
\dim_\cH (\cS(u)) \leq n-2.
\end{equation*}
\item Suppose that $a_+,a_-\in C^{1,1}(\bar{B}_1)$. Then there is a positive constant $C$ depending only on $n$, $\lambda$ and $\norm{a_\pm}_{C^{1,1}(B_1)}$ such that 
\begin{equation*}
\cH^{n-1}(\Gamma(u)\cap B_{1/2})\leq CN,
\end{equation*}
where \footnote{Due to the maximum principle, a nontrivial solution $u$ to \eqref{eq:main} cannot vanish entirely on $\p B_1$, which assures that $N$ is always finite under the circumstances of Theorem \ref{theorem:main-deg}.} 
\begin{equation*}
N = \frac{\int_{B_1}|\nabla u|^2}{\int_{\p B_1}u^2}.
\end{equation*}
\end{enumerate}
\end{theorem}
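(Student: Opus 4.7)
Fix $z \in \cS(u)$ and work with the auxiliary function $\phi_z(u)$, which by Remark~\ref{remark:phi} satisfies
\begin{equation*}
\Delta \phi_z(u) = \ddiv(\sigma_z(x)\nabla u), \qquad |\sigma_z(x)| \leq \omega_0|x-z|^\alpha.
\end{equation*}
The plan has two stages. First, since this right-hand side is of order $|x-z|^\alpha$ in divergence form at $z$, a pointwise Schauder-type bootstrap (using Appendix~\ref{appendix:ptwise}) produces at each step a harmonic polynomial approximating $\phi_z(u)$ at $z$. Iterating, and invoking finiteness of the vanishing order $d(z)$, the process forces $d(z)$ to be a positive integer $\geq 2$; this is Lemma~\ref{lemma:u-d}, extending analogous results of \cite{B} and \cite{H}. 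Second, with integer vanishing orders in hand I blow up: $u_r(y) := u(z+ry)/r^{d(z)}$. The divergence right-hand side of \eqref{eq:phi} rescales to $O(r^\alpha)$, so any blow-up limit $P_z$ has $\phi_z(P_z)$ harmonic and $d(z)$-homogeneous; since $\phi_z$ is a positive multiple of the identity on each of $\{P_z>0\}$ and $\{P_z<0\}$, $P_z$ itself is a homogeneous harmonic polynomial of degree $d(z) \geq 2$.

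With these blow-ups at hand I follow the Han--Hardt--Simon stratification. To each $z\in \cS(u)$ assign the dimension $j(z)$ of the spine (the largest subspace of translation invariance) of $P_z$; since $P_z$ is a nontrivial harmonic polynomial of degree $\geq 2$, $j(z)\leq n-2$. Setting $\cM^j(u) := \{z\in\cS(u) : j(z)=j\}$, the uniform polynomial approximation of $u$ near each $z$ coming from the Schauder estimate on $\phi_z(u)$, combined with Whitney's extension theorem and Federer's projection argument as in \cite{H}, \cite{HS}, realizes $\cM^j(u)$ as a countable union of $j$-dimensional $C^1$ manifolds, and hence $\dim_\cH \cS(u) \leq n-2$.

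\textbf{Part (ii).} Under the stronger hypothesis $a_\pm \in C^{1,1}$, I transform the equation. Set $v(x) := a_+(x) u^+(x) - a_-(x) u^-(x)$; note $\Gamma(v) = \Gamma(u)$ since $a_\pm > 0$. A direct computation on each side of $\{u=0\}$ yields
\begin{equation*}
A(x,u)\nabla u = \nabla v - v\, \vv{b}(x), \qquad \vv{b}(x) := \frac{\nabla a_+(x)}{a_+(x)} H(u) + \frac{\nabla a_-(x)}{a_-(x)} (1-H(u)),
\end{equation*}
where the product $v\vv{b}$ is globally well-defined since $v=0$ on the jump locus $\{u=0\}$. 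Hence $\ddiv(A\nabla u)=0$ becomes $\Delta v = \ddiv(v\vv{b})$ in $B_1$, with $\vv{b}\in L^\infty$ and bounded weak divergence under $a_\pm\in C^{1,1}$. Applying the variant of Almgren's frequency formula from the Appendix to $v$, the frequency $N_v(r) := r\int_{B_r}|\nabla v|^2/\int_{\p B_r}v^2$ is quasi-monotone, and $N_v(1)\leq CN$ by the ellipticity bounds on $a_\pm$. Quasi-monotonicity delivers a doubling estimate for $v$ on all scales $r<\tfrac12$, and the Hardt--Simon--Lin nodal-set argument converts this into $\cH^{n-1}(\Gamma(v)\cap B_{1/2})\leq CN$, which equals $\cH^{n-1}(\Gamma(u)\cap B_{1/2})$.

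The principal difficulty is Lemma~\ref{lemma:u-d}, the bootstrap to integer vanishing order. The freezing procedure at $z$ only yields $C^\alpha$ control on the remainder $\sigma_z$, and there is no uniform Lipschitz bound on $A(x,u)$; unlike the Garofalo--Lin setting, no variational frequency formula is available to short-circuit the iteration, so each step of the bootstrap must carefully track how the $C^\alpha$ error interacts with the harmonic polynomial being subtracted using only the pointwise Schauder estimates near $z$.
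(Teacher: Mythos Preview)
Your overall strategy for both parts matches the paper's, but each part contains a gap worth naming.

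\textbf{Part (i).} Your claim that the blow-up of $u$ at $z$ is itself a homogeneous harmonic polynomial is false when $a_+(z)\neq a_-(z)$. What the bootstrap gives is that $\phi_z(u)$ has a homogeneous harmonic polynomial approximant $P_z$; the blow-up of $u$ is then $\phi_z^{-1}(P_z)=\tfrac{1}{a_+(z)}P_z^+-\tfrac{1}{a_-(z)}P_z^-$, which is not a polynomial. This is not merely cosmetic: the Whitney--Federer machinery you invoke from \cite{H} is built on having a single function with polynomial Taylor data at each singular point, whereas here the object carrying the polynomial approximation is $\phi_z(u)$, and $\phi_z$ \emph{depends on $z$}. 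The paper handles exactly this by proving a continuity result (Lemma~\ref{lemma:main-deg-C1}): if $z^k\to 0$ in $\cS_d(u)$ then $P_{z^k}\to P_0$ in $C^d_{loc}$, despite $\phi_{z^k}\neq\phi_0$. This, together with the tangent-space lemma (Lemma~\ref{lemma:main-deg-tangent}), is what substitutes for the direct Whitney argument. You need to either prove this continuity or explain how Whitney extension applies to the $z$-dependent family $\{\phi_z(u)\}$.

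\textbf{Part (ii).} Your transformation and the resulting equation for $v$ are correct and coincide with the paper's. The gap is the one-line assertion ``$N_v(1)\leq CN$ by the ellipticity bounds on $a_\pm$.'' Ellipticity alone gives $\int_{\p B_1}v^2\geq\lambda^2\int_{\p B_1}u^2$, but for the numerator one has $|\nabla v|^2\leq C(|\nabla u|^2+u^2)$, so you must bound $\int_{B_1}u^2$ by $\int_{B_1}|\nabla u|^2$. This is a Poincar\'e inequality for $u$ itself (not $u$ minus its mean), which requires knowing that $\{u>0\}$ and $\{u<0\}$ each have Lebesgue measure bounded below; the paper establishes this as a separate compactness lemma (Lemma~\ref{lemma:leb}) using only the finiteness of $N$. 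Without it the frequency comparison does not close.
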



\subsection{Optimal Regularity of Solutions}\label{subsection:optimal reg}

We begin with interior H\"{o}lder regularity of solutions to \eqref{eq:main} across their free boundaries. One may find the same proof in \cite{KLS}, though we provide it for the sake of completeness.

\begin{lemma}\label{lemma:Ca} For each $0<\beta<1$, there exist $C_\beta,r_\beta>0$, depending only on $n$, $\lambda$, $\omega_0$, $\alpha$ and $\beta$, such that
\begin{equation}\label{eq:Ca}
|u(x)|\leq C_\beta|x-z|^\beta,
\end{equation} 
for any $x\in B_{r_\beta}(z)$ and any $z\in\Gamma(u)\cap B_{1/2}$.
\end{lemma}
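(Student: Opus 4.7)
The plan is to establish \eqref{eq:Ca} by a compactness--iteration (blow-up) argument built around the auxiliary function $\phi_z(u)$ from Definition~\ref{definition:phi} and its near-harmonic equation \eqref{eq:phi}. I first reduce the result to a one-step decay: for each $\beta\in(0,1)$, the aim is to produce $\theta=\theta(n,\lambda,\omega_0,\alpha,\beta)\in(0,1)$ and $r_\beta>0$ such that whenever $u$ solves \eqref{eq:main} with $\|u\|_{L^\infty(B_1)}\le 1$, $z\in\Gamma(u)\cap B_{1/2}$, and $r\in(0,r_\beta]$,
\begin{equation}\label{eq:one-step-proposal}
\sup_{B_{\theta r}(z)}|u|\le \theta^\beta \max\!\left\{\sup_{B_r(z)}|u|,\; r^\beta\right\}.
\end{equation}
A standard dyadic iteration of \eqref{eq:one-step-proposal} at scales $r_k=\theta^k r_\beta$ gives $\sup_{B_{r_k}(z)}|u|\le C_\beta r_k^\beta$, from which \eqref{eq:Ca} follows by interpolation across dyadic annuli.

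I prove \eqref{eq:one-step-proposal} by contradiction and compactness. Fix $\beta<1$ and choose $\theta$ so small that $C(n)\lambda^{-2}\theta<\tfrac12\theta^\beta$ (possible since $\beta<1$). If no admissible $r_\beta$ works, I extract sequences of data satisfying \eqref{eq:ellip}--\eqref{eq:cont}, points $z_k\in\Gamma(u_k)\cap B_{1/2}$, scales $r_k\downarrow 0$, and quantities $m_k:=\max\{\sup_{B_{r_k}(z_k)}|u_k|,\,r_k^\beta\}\le 1$ with $\sup_{B_{\theta r_k}(z_k)}|u_k|>\theta^\beta m_k$. Define the blow-ups $\tilde u_k(x):=u_k(z_k+r_k x)/m_k$, so that $\|\tilde u_k\|_{L^\infty(B_1)}\le 1$, $\tilde u_k(0)=0$, and $\sup_{B_\theta}|\tilde u_k|>\theta^\beta$. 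Rescaling \eqref{eq:phi} shows that
\[
\tilde v_k:=a_+(z_k)\tilde u_k^+-a_-(z_k)\tilde u_k^-,\qquad \Delta\tilde v_k=\ddiv\!\bigl(\tilde{\vv f}_k+\tilde\sigma_k\,\nabla\tilde u_k\bigr)\ \text{in}\ B_1,
\]
with $|\tilde{\vv f}_k|\le\lambda^{-1}r_k/m_k\le\lambda^{-1}r_k^{1-\beta}\to 0$ (using $m_k\ge r_k^\beta$) and $\|\tilde\sigma_k\|_{L^\infty(B_1)}\le\omega_0 r_k^\alpha\to 0$ by \eqref{eq:sigma-Ca}. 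Uniform De~Giorgi--Nash--Moser and Caccioppoli estimates supply a uniform $C^{0,\beta_0}\cap H^1$ bound on $\tilde u_k$, so along a subsequence $\tilde u_k\to\tilde u_\infty$ uniformly on $B_{3/4}$ and $\nabla\tilde u_k\rightharpoonup\nabla\tilde u_\infty$ weakly in $L^2_{\mathrm{loc}}(B_1)$. Passing $a_\pm(z_k)\to a_\pm^\infty\in[\lambda,\lambda^{-1}]$ along a further subsequence, the $L^\infty$-continuity of $u\mapsto \phi_z(u)$ gives $\tilde v_k\to \tilde v_\infty:=a_+^\infty\tilde u_\infty^+-a_-^\infty\tilde u_\infty^-$ uniformly; since the right-hand side vanishes in $H^{-1}_{\mathrm{loc}}$, the limit $\tilde v_\infty$ is harmonic in $B_1$ with $\tilde v_\infty(0)=0$ and $\|\tilde v_\infty\|_{L^\infty(B_1)}\le\lambda^{-1}$.

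Interior gradient estimates for harmonic functions produce $|\tilde v_\infty(x)|\le C(n)\lambda^{-1}|x|$ on $B_{1/2}$, and the lower bound $a_\pm^\infty\ge\lambda$ implies $|\tilde u_\infty|\le\lambda^{-1}|\tilde v_\infty|$; together they force $\sup_{B_\theta}|\tilde u_\infty|\le C(n)\lambda^{-2}\theta<\tfrac12\theta^\beta$, contradicting $\sup_{B_\theta}|\tilde u_\infty|\ge\theta^\beta$ inherited from $\sup_{B_\theta}|\tilde u_k|>\theta^\beta$ via uniform convergence. The hardest step is passing to the limit in the divergence-form equation in the face of the nonlinear dependence of $A(x,u)$ on $H(u)$: without the $\phi_z$-formulation, identifying the weak limit of $A(x,u_k)\nabla\tilde u_k$ would demand quantitative information on $\meas(\{\tilde u_\infty=0\})$, which is not available. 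Routing everything through $\phi_z(u)$ absorbs the discontinuous coefficient into a pointwise-continuous function of $u$ and recasts the governing PDE as an inhomogeneous Poisson equation whose source decays verifiably by \eqref{eq:sigma-Ca} and the lower bound $m_k\ge r_k^\beta$.
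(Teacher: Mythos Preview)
Your proof is correct. Both arguments are compactness proofs built on the $\phi_z$ transformation, but they are organized differently and this difference is worth noting.

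The paper's proof performs a \emph{global} blow-up: it rescales so that the normalized solutions $\tilde u_j$ are defined on balls expanding to all of $\R^n$ with sublinear growth $|\tilde u_j|\le R^\beta$, passes to the limit directly in the nonlinear equation $\ddiv(A_j(x,\tilde u_j)\nabla\tilde u_j)=\ddiv\vv{f_j}$, and only then applies the transformation $w_0=a_+(z^0)\tilde u_0^+-a_-(z^0)\tilde u_0^-$ to obtain a harmonic function on $\R^n$; the contradiction comes from the Liouville theorem. Your proof instead proves a \emph{one-step decay} \eqref{eq:one-step-proposal} by a local blow-up on $B_1$, applies $\phi_{z_k}$ \emph{before} passing to the limit (so the limit equation is simply $\Delta\tilde v_\infty=0$ in $B_1$), and reaches a contradiction via the interior gradient estimate for harmonic functions; the full decay then follows by iteration. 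Your route avoids the limit passage in the quasilinear operator (which in the paper relies on $H(u_j)\nabla u_j=\nabla u_j^+$ and weak convergence of $\nabla\tilde u_j^\pm$) and does not invoke Liouville, at the cost of an extra iteration layer. The paper's route is more streamlined once one accepts the nonlinear limit, and it produces the growth estimate in one shot without iteration.
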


\begin{proof} Suppose towards a contradiction that there are a sequence $\{u_j\}_{j=1}^\infty$ of solutions to \eqref{eq:main} with $|u_j|\leq 1$ in $B_1$, a sequence $\{z^j\}_{j=1}^\infty\subset\Gamma(u)\cap B_{1/2}$ and a sequence $\{r_j\}_{j=1}^\infty$ of radii which decrease to zero as $j\ra\infty$ such that $\sup_{B_{r_j}(z^j)}|u_j|=j$ and $\sup_{B_r(z^j)}|u_j|\leq jr^\beta$ for any $r_j\leq r\leq 1$. For each $j=1,2,\cdots$, define a function $\tilde{u}_j$ on $B_{1/r_j}$ by $\tilde{u}_j(x) =\frac{u_j(r_jx+z^j)}{jr^\beta}$. Then 
\begin{equation}\label{eq:false-assump}
\sup_{B_1}|\tilde{u}_j|=1\quad\text{and}\quad \sup_{B_R}|\tilde{u}_j|\leq R^\beta\quad\text{for any }1\leq R\leq \frac{1}{r_j}.
\end{equation}

As $\{z^j\}_{j=1}^\infty$ being a bounded sequence, there exists a limit point $z^0\in\bar{B}_{1/2}$ to which $\{z^j\}_{j=1}^\infty$ converges possibly along a subsequence; we are going to denote this subsequence simply by $\{z^j\}_{j=1}^\infty$ for notational convenience. 

Observe that $\{\tilde{u}_j\}_{j=1}^\infty$ is locally uniformly bounded in $\R^n$, and that each $\tilde{u}_j$ is a weak solution to 
\begin{equation}\label{eq:tuj}
\ddiv(A_j(x,\tilde{u}_j)\nabla \tilde{u}_j)=\ddiv(\vv{f_j}(x))\quad\text{in }B_{1/{2r_j}},
\end{equation}
where $A_j(x,s)=A(r_jx+z^j,s)$ and $\vv{f_j}(x)=\vv{f}(r_jx+z^j)$; notice that $A_j$ and $f_j$ satisfy \eqref{eq:ellip} with constant $\lambda$ uniformly for $j=1,2,\cdots$. Hence, the standard interior H\"{o}lder regularity theory and the interior energy estimate apply to $\{\tilde{u}_j\}_{j=1}^\infty$, resulting a function $\tilde{u}_0\in C_{loc}^\gamma(\R^n)\cap W_{loc}^{1,2}(\R^n)$, with $\gamma$ being universal, such that $\tilde{u}_j\ra \tilde{u}_0$ uniformly in $C_{loc}^{\gamma'}(\R^n)$ for any $\gamma'<\gamma$ as well as that $\nabla\tilde{u}_j\ra \nabla\tilde{u}_0$ weakly in $L^2(\R^n)$. In addition, it is not hard to see that $A_j(x,\tilde{u}_j)\nabla\tilde{u}_j\ra A_0(\tilde{u}_0)\nabla\tilde{u}_0$ weakly in $L^2(\R^n)$ as well, where $A_0(\tilde{u}_0)=a_+(z^0)H(\tilde{u}_j)+a_-(z^0)(1-H(\tilde{u}_0))$. Moreover, the continuity of $\vv{f}$ implies that $\vv{f_j}\ra \vv{f}(0)$ locally uniformly in $\R^n$ and $\ddiv\vv{f_j}\ra 0$ weakly star in $L_{loc}^\infty(\R^n)$. Passing to the limit in \eqref{eq:false-assump} and \eqref{eq:tuj}, we obtain 
\begin{equation*}
\sup_{B_1}|\tilde{u}_0|=1\quad\text{and}\quad \sup_{B_R}|\tilde{u}_0|\leq R^\beta\quad\text{for any }R\geq 1,
\end{equation*}
as well as that $\tilde{u}_0$ solves the following PDE, in the sense of distribution, 
\begin{equation*}
\ddiv(A_0(\tilde{u}_0)\nabla\tilde{u}_0)=0\quad\text{in }\R^n.
\end{equation*}

Define a function $w_0$ on $\R^n$ by $w_0(x)= a_+(z^0)\tilde{u}_0^+(x)-a_-(z^0)\tilde{u}_0^-(x)$. Then $w_0\in C_{loc}^\gamma(\R^n)\cap W_{loc}^{1,2}(\R^n)$, and due to the convergence results observed right above, $w_0$ becomes a harmonic function in the entire space, satisfying $\sup_{B_1}|w_0|\geq\lambda$ and $|w_0(x)|\leq \lambda^{-1}|x|^\beta$ for $|x|\geq 1$. According to the Liouville theorem, such a harmonic function must be constant. However, since $w_0(0)=0$ as $\tilde{u}_0(0)=0$, $w_0$ must vanish in the whole space, a contradicting the fact that  $\sup_{B_1}|\tilde{u}_0|=1$.
\end{proof}

A straightforward  consequences  to the preceding lemma  is
 a rough estimate of the growth rate of the gradient near a free boundary point,  which in turn implies 
the  $L^2$-BMO type estimate for the gradient.
 The proofs are standard but included for the sake of completeness.

\begin{corollary}\label{corollary:more-W12} For each $0<\delta<\alpha$, with $\alpha$ chosen from \eqref{eq:cont}, there exist $C_\delta, \rho_\delta>0$ depending only on $n$, $\lambda$, $\omega_0$, $\alpha$ and $\delta$, such that 
\begin{equation}\label{eq:control-grad u}
r^\delta\left(\fint_{B_r(z)}|\nabla u|^2\right)^{1/2}\leq C_{\delta},
\end{equation}
for any $z\in\Gamma(u)\cap B_1$ and any $0<r\leq \rho_\delta$.,
\end{corollary}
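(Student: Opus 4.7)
The plan is to combine the Caccioppoli inequality for the equation \eqref{eq:main} with the Hölder decay furnished by Lemma \ref{lemma:Ca}. Since $A(x,u)$ is bounded and uniformly elliptic by \eqref{eq:ellip}, and $\vv{f}$ is bounded, the usual energy estimate will give $\int_{B_r(z)}|\nabla u|^2$ in terms of $r^{-2}\int_{B_{2r}(z)}u^2$ and $\int_{B_{2r}(z)}|\vv{f}|^2$; the Hölder bound \eqref{eq:Ca} is precisely what turns the first integral into a power of $r$ slightly better than $r^n$, which is what the desired $L^2$--$r^\delta$ estimate demands.

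First I would fix $\beta\in(1-\delta,1)$, say $\beta = 1-\delta/2$, and invoke Lemma \ref{lemma:Ca} to obtain constants $C_\beta,r_\beta>0$ such that $|u(x)|\leq C_\beta|x-z|^\beta$ on $B_{r_\beta}(z)$ for every $z\in\Gamma(u)\cap B_{1/2}$ (one reduces to this interior situation by a harmless rescaling to cover all $z\in\Gamma(u)\cap B_1$ after shrinking $\rho_\delta$). Next I would apply the standard Caccioppoli argument to \eqref{eq:main}: test the weak formulation against $\varphi=\eta^2 u$ with a cutoff $\eta\in C_c^\infty(B_{2r}(z))$ satisfying $\eta\equiv 1$ on $B_r(z)$ and $|\nabla\eta|\leq C/r$, use the bilateral bound on $A$ and Young's inequality with a small parameter absorbing the $\int\eta^2|\nabla u|^2$ term on the right. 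This yields the standard inequality
\begin{equation*}
\int_{B_r(z)}|\nabla u|^2 \leq \frac{C}{r^2}\int_{B_{2r}(z)} u^2 + C\int_{B_{2r}(z)}|\vv{f}|^2,
\end{equation*}
with $C=C(n,\lambda)$.

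Inserting the Hölder bound, the first term is controlled by $C C_\beta^2 r^{n+2\beta-2}$, while the second is controlled by $C\lambda^{-2}r^n$. For $r\leq 1$ and $\beta<1$, the first term dominates, so
\begin{equation*}
\fint_{B_r(z)}|\nabla u|^2 \leq C(n,\lambda)\bigl(C_\beta^2 r^{2\beta-2}+\lambda^{-2}\bigr)\leq C_\beta' r^{2\beta-2}.
\end{equation*}
Multiplying by $r^{2\delta}$ and taking square roots gives
\begin{equation*}
r^\delta\left(\fint_{B_r(z)}|\nabla u|^2\right)^{1/2}\leq (C_\beta')^{1/2} r^{\delta+\beta-1} = (C_\beta')^{1/2} r^{\delta/2},
\end{equation*}
by our choice $\beta=1-\delta/2$. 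Provided $r\leq\rho_\delta:=\min\{1,r_\beta/2\}$, the right-hand side is bounded by a constant $C_\delta$ depending only on $n$, $\lambda$, $\omega_0$, $\alpha$, and $\delta$, which is the claimed estimate.

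There is no genuine obstacle here; the only point worth verifying carefully is that the choice $\beta=1-\delta/2$ strictly lies in $(1-\delta,1)$ so that Lemma \ref{lemma:Ca} is applicable and the exponent $\delta+\beta-1$ in the final display is positive (yielding a constant bound as $r\downarrow 0$). The constraint $\delta<\alpha$ in the statement is not actually used in this estimate itself; it is compatible with the range in which later Schauder-type arguments will absorb the Hölder modulus of the coefficients.
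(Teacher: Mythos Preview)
Your proof is correct and follows essentially the same approach as the paper: combine the Caccioppoli (local energy) estimate for \eqref{eq:main} with the H\"older decay of Lemma~\ref{lemma:Ca}. The paper phrases this via the rescaling $u_{z,r}(x)=u(rx+z)/r^{1-\delta}$ with $\beta=1-\delta$ and then applies the energy estimate on the unit ball, whereas you apply Caccioppoli directly at scale $r$ with $\beta=1-\delta/2$; both are equivalent computations, and your remark that the hypothesis $\delta<\alpha$ is not actually used at this stage is accurate.
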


\begin{proof} Let $z\in\Gamma(u)\cap B_{1/2}$ and choose $\rho_\delta$ by $r_\beta$ in Lemma \ref{lemma:Ca} with $\beta=1-\delta$. Set $u_{z,r}$ as a function on $B_1$ defined by $u_{z,r}(x)=\frac{u(rx+z)}{r^{1-\delta}}$. Applying \eqref{eq:Ca} with $\beta=1-\delta$, we obtain $|u_{z,r}|\leq C_\delta$ in $B_{3/4}$, for some positive constant $C_\delta$, depending only on $n$, $\lambda$, $\omega_0$, $\alpha$ and $\delta$. As $u_{z,r}$ satisfies a PDE whose coefficient also satisfies the ellipticity condition in \eqref{eq:ellip}, we deduce from the local energy estimate that 
\begin{equation*}
r^{2\delta}\fint_{B_r(z)}|\nabla u|^2=\fint_{B_{1/2}}|\nabla u_{z,r}|^2\leq C\fint_{B_{3/4}}u_{z,r}^2\leq CC_\delta^2
\end{equation*}
where $C>0$ depends only on $n$ and $\lambda$.
\end{proof}

Next we prove $L^2$-BMO type estimates for the gradient of $u$ at its free boundary points. 

\begin{notation}\label{notation:avg} In what follows, we shall use $(v)_{z,r}$ to denote $(v)_{z,r} = \fint_{B_r(z)}v$, namely the average of $v$ over $B_r(z)$, where $z\in\R^n$, $r>0$ and $v\in L^1(B_r(z))$. 
\end{notation}

\begin{lemma}\label{lemma:L2-BMO} There exist $C_0,r_0>0$, depending only on $n$, $\lambda$, $\omega_0$ and $\alpha$, such that 
\begin{equation}\label{eq:L2-BMO}
\fint_{B_r(z)}|\nabla u-(\nabla u)_{z,r}|^2\leq C_0,
\end{equation}
for any $0<r\leq r_0$ and any $z\in\Gamma(u)\cap B_{1/2}$. 
\end{lemma}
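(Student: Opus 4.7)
The plan is to derive \eqref{eq:L2-BMO} by transforming the problem via $v := \phi_z(u)$ (Definition~\ref{definition:phi}), which reduces the quasilinear equation to a Poisson equation with small right-hand side near $z$, and then invoking the pointwise Schauder estimate from Appendix~\ref{appendix:ptwise}. By Remark~\ref{remark:phi}, $v$ solves $\Delta v = \ddiv \vv{g}$ with $\vv{g} := \vv{f} + \sigma_z \nabla u$, where $|\sigma_z(x)| \leq \omega_0|x-z|^\alpha$.

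First I would establish the Morrey-type bound $\int_{B_r(z)} |\vv{g}|^2 \leq C r^n$ for $0 < r \leq \rho_\delta$ with some fixed $\delta \in (0, \alpha)$. Using \eqref{eq:ellip}, \eqref{eq:sigma-Ca} together with Corollary~\ref{corollary:more-W12},
\begin{equation*}
  \int_{B_r(z)} |\vv{g}|^2 \leq 2 \int_{B_r(z)} |\vv{f}|^2 + 2 \omega_0^2 r^{2\alpha} \int_{B_r(z)} |\nabla u|^2 \leq C r^n + C r^{n + 2(\alpha - \delta)} \leq C r^n,
\end{equation*}
so $\vv{g}$ lies uniformly in $\cL^{2,n}$ at $z$, with a bound depending only on $n, \lambda, \omega_0, \alpha$.

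Next I would apply the pointwise Schauder estimate from Appendix~\ref{appendix:ptwise} to $\Delta v = \ddiv \vv{g}$ with this Morrey data. The estimate supplies a vector $\vv{P}_z \in \R^n$ (independent of $r$) and constants $C_1, r_0 > 0$, depending only on $n, \lambda, \omega_0, \alpha$, such that
\begin{equation*}
  \fint_{B_r(z)} |\nabla v - \vv{P}_z|^2 \leq C_1, \qquad |\vv{P}_z| \leq C_1 \qquad (0 < r \leq r_0).
\end{equation*}
The crucial feature is that $\vv{P}_z$ is a single reference vector valid at every scale --- that is, $\nabla v$ admits an $L^2$-trace at $z$ --- which is strictly stronger than a standard BMO bound.

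Finally I would transfer the estimate to $\nabla u$. Since $\nabla v = \bar{a}(x) \nabla u$ with $\bar{a}(x) \in \{a_+(z), a_-(z)\} \subset [\lambda, \lambda^{-1}]$, we have $|\nabla u| \leq \lambda^{-1} |\nabla v|$ a.e., and the independence of $\vv{P}_z$ on $r$ yields $\fint_{B_r(z)} |\nabla v|^2 \leq 2 |\vv{P}_z|^2 + 2 \fint_{B_r(z)} |\nabla v - \vv{P}_z|^2 \leq C$. Hence
\begin{equation*}
  \fint_{B_r(z)} |\nabla u - (\nabla u)_{z,r}|^2 \leq \fint_{B_r(z)} |\nabla u|^2 \leq \lambda^{-2} \fint_{B_r(z)} |\nabla v|^2 \leq C_0,
\end{equation*}
establishing \eqref{eq:L2-BMO}. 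The main obstacle will be the pointwise Schauder step --- namely, obtaining a \emph{fixed} trace vector $\vv{P}_z$ (not merely an oscillation bound) for $\nabla v$ at $z$. A classical BMO estimate would only give $\fint_{B_r(z)} |\nabla v - (\nabla v)_{z,r}|^2 \leq C$, allowing $|(\nabla v)_{z,r}|$ to grow logarithmically in $r$ and thereby blocking the clean transfer to $\nabla u$; the weight $\bar{a}$ relating the two gradients jumps across $\{u=0\}$ with a definite size $|a_+(z)-a_-(z)|$, so any $r$-dependence in the reference vector would not be absorbed. The Morrey bound on $\vv{g}$ from the first step places it in exactly the correct scaling class for the pointwise Schauder estimate of the Appendix to furnish this sharper pointwise statement.
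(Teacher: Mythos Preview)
Your overall strategy---pass to $v=\phi_z(u)$, control the source $\vv g=\vv f+\sigma_z\nabla u$ via Corollary~\ref{corollary:more-W12}, and invoke an appendix estimate---is the same as the paper's. The gap is in the step you yourself flag as ``the main obstacle''. You assert that the Morrey bound $\fint_{B_r(z)}|\vv g|^2\le C$ (i.e.\ $\vv g\in\cL^{2,n}$ at $z$) places the data ``in exactly the correct scaling class for the pointwise Schauder estimate of the Appendix to furnish'' a \emph{fixed} vector $\vv P_z$ with $|\vv P_z|\le C_1$. That is false: $\cL^{2,n}$ on the source is precisely the borderline that yields BMO for $\nabla v$, not a bounded pointwise trace. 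Lemma~\ref{lemma:bmo} in the appendix gives only $\fint_{B_r}|\nabla v-(\nabla v)_r|^2\le C$ with the $r$-dependent average (and, moreover, requires \emph{smallness} $\fint|\vv g|^2\le\eta^2$, not merely boundedness). The other appendix estimate, Lemma~\ref{lemma:C1a}, does produce a fixed limiting gradient, but it requires $L^p$ control for some $p>n$ together with genuine H\"older decay $\bigl(\fint_{B_r}|\vv g|^p\bigr)^{1/p}\le\eta r^{\alpha}$---neither of which your $\cL^{2,n}$ bound provides. So the very obstruction you articulate (logarithmic drift of the reference vector blocking the transfer through the jumping weight $\bar a$) is not actually overcome by your argument.

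What is missing is the observation that after subtracting the constant $\vv f(z)$ (which does not alter $\ddiv\vv g$), the H\"older continuity of $\vv f$ together with \eqref{eq:sigma-Ca} and Corollary~\ref{corollary:more-W12} (say with $\delta=\alpha/2$) give the genuinely decaying bound
\[
\fint_{B_r(z)}|\vv g-\vv f(z)|^2 \le C r^{\alpha},
\]
which tends to zero as $r\to 0$. This is the input the paper actually uses: it then chooses $r_0$ so that $Cr_0^{\alpha}\le\eta^2$ and applies Lemma~\ref{lemma:bmo} to obtain the BMO bound on $\nabla v$; the passage to $\nabla u$ is handled by the relation $u=\tfrac{1}{a_+(z)}v^+-\tfrac{1}{a_-(z)}v^-$ rather than by seeking a fixed $\vv P_z$. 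If you wish to keep your route and obtain a bounded trace vector, you would need the H\"older decay above \emph{and} an $L^2$-analogue of Lemma~\ref{lemma:C1a}; as written, the appendix does not supply that.
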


\begin{proof} Fix $z\in\Gamma(u)\cap B_{1/2}$. Since $u=\frac{1}{a_+(z)}(\phi_z(u))^+-\frac{1}{a_-(z)}(\phi_z(u))^-$, it is enough to prove \eqref{eq:L2-BMO} with $\phi_z(u)$ instead of $u$. 

Choose $\delta=\alpha/2$ in Corollary \ref{corollary:more-W12}. Then for any $0<r<r_1$,
\begin{equation*}
\fint_{B_r(z)}|\sigma_z\nabla u|^2 \leq \omega_0r^{2\alpha}\fint_{B_r(z)}|\nabla u|^2\leq C_1r^\alpha,
\end{equation*}
where $\sigma_z$ is the function defined by \eqref{eq:sigma} and $r_1$ and $C_1$ are positive constants depending only on $n$, $\lambda$, $\omega_0$ and $\alpha$. Now we choose $0<r_0\leq r_1$ such that $C_1r_0^\alpha\leq \eta^2$, where $\eta$ is the constant chosen from Lemma \ref{lemma:bmo}. Since $\phi_z(u)$ solves \eqref{eq:phi}, the rest of the proof is a direct consequence of Lemma \ref{lemma:bmo} with $\vv{f}$ replaced by $\vv{f}+\sigma_z\nabla u$ and a canonical scaling argument. 
\end{proof}

By means of the John-Nirenberg inequality, we may now improve Corollary \ref{corollary:more-W12} for $L^p$ norms with $1\leq p<\infty$.

\begin{lemma}\label{lemma:more-W1p} Let $1\leq p<\infty$. Then there exists $C_p>0$, depending only on $n$, $\lambda$, $\omega_0$, $\alpha$ and $p$ such that $u\in W^{1,p}(B_{r_0}(z))$ for any $z\in\Gamma(u)\cap B_{1/2}$ and 
\begin{equation}\label{eq:W1p}
\norm{u}_{W^{1,p}(B_{r_0})}\leq C_p,
\end{equation}
where $r_0$ is the constant chosen from Lemma \ref{lemma:L2-BMO}. Moreover, for each $0<\delta<\alpha$, there exist $C_\delta, r_\delta>0$, depending only on $n$, $\lambda$, $\omega_0$, $\alpha$ and $\delta$, such that 
\begin{equation}\label{eq:control-grad u-W1p}
r^\delta\left(\fint_{B_r(z)}|\nabla u|^p\right)^{1/p}\leq C_\delta,
\end{equation}
for any $z\in\Gamma(u)\cap B_1$, any $0<r\leq r_\delta$.
\end{lemma}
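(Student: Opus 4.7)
The plan is to use the $L^2$-BMO bound of Lemma~\ref{lemma:L2-BMO} together with the John--Nirenberg inequality to upgrade the $L^2$ gradient estimate of Corollary~\ref{corollary:more-W12} to an $L^p$ estimate. The key step will be to establish, for every $z\in\Gamma(u)\cap B_{1/2}$, every $0<r\leq r_0$, and every $p\in[1,\infty)$, the $L^p$-oscillation bound
\begin{equation*}
\left(\fint_{B_r(z)}|\nabla u-(\nabla u)_{z,r}|^p\right)^{1/p}\leq C_p.
\end{equation*}
This is where the John--Nirenberg inequality enters: the requisite BMO seminorm of $\nabla u$ over sub-balls of $B_{r_0}(z)$ is bounded by combining Lemma~\ref{lemma:L2-BMO} for sub-balls meeting $\Gamma(u)$ with interior Schauder theory in the open sets $\{u>0\}$ and $\{u<0\}$, where the equation reduces to the linear divergence-form equation $\ddiv(a_\pm\nabla u)=\ddiv\vv{f}$ with $C^\alpha$ data.

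Once the $L^p$-oscillation bound is in hand, \eqref{eq:control-grad u-W1p} follows from the triangle inequality. Writing
\begin{equation*}
\left(\fint_{B_r(z)}|\nabla u|^p\right)^{1/p}\leq\left(\fint_{B_r(z)}|\nabla u-(\nabla u)_{z,r}|^p\right)^{1/p}+|(\nabla u)_{z,r}|,
\end{equation*}
the first term on the right is bounded by $C_p$ from the previous step, while Jensen's inequality and Corollary~\ref{corollary:more-W12} give
\begin{equation*}
|(\nabla u)_{z,r}|\leq\left(\fint_{B_r(z)}|\nabla u|^2\right)^{1/2}\leq C_\delta r^{-\delta}.
\end{equation*}
Multiplying through by $r^\delta$ yields \eqref{eq:control-grad u-W1p} with $r_\delta:=\min\{r_0,\rho_\delta\}$. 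The $W^{1,p}$ estimate \eqref{eq:W1p} is then obtained by taking $r=r_0$ in \eqref{eq:control-grad u-W1p} with a small fixed $\delta$ (say $\delta=\alpha/2$) and combining with the trivial $L^p$ bound on $u$ coming from $|u|\leq 1$.

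I expect the main obstacle to lie in the BMO verification inside the first step, since Lemma~\ref{lemma:L2-BMO} only controls oscillations on sub-balls centered on $\Gamma(u)$. Sub-balls $B_{r'}(y)$ well-inside $\{u>0\}$ or $\{u<0\}$ must be handled via a rescaled Schauder estimate for the one-sided linear equation, and because the natural rescaling involves both the radius $r'$ and the distance $\dist(y,\Gamma(u))$, care is needed to arrange these scalings so that the resulting oscillation bound remains bounded uniformly down to the free boundary.
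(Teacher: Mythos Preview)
Your approach matches the paper's: apply John--Nirenberg to the $L^2$-BMO bound of Lemma~\ref{lemma:L2-BMO}, then combine with Corollary~\ref{corollary:more-W12} via the triangle inequality exactly as you outline. You are in fact more careful than the paper, which simply invokes John--Nirenberg directly from \eqref{eq:L2-BMO} without addressing sub-balls whose centers lie off $\Gamma(u)$; your plan to cover those via interior Schauder for the one-sided linear equation $\ddiv(a_\pm\nabla u)=\ddiv\vv{f}$ is the right fix.

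The scaling concern you anticipate is genuine and worth spelling out. A naive application using only $|u|\le C_\beta d^\beta$ from Lemma~\ref{lemma:Ca} on $B_d(y)$ (with $d=\dist(y,\Gamma(u))$) yields $[\nabla u]_{C^\alpha(B_{d/2}(y))}$ of order $d^{\beta-1-\alpha}$, so the resulting oscillation bound on $B_{r'}(y)$ is of order $d^{\beta-1}$ and blows up as $d\to 0$. One clean way to close this is to first subtract the affine function with slope $(\nabla u)_{z',2d}$, where $z'\in\Gamma(u)$ is nearest to $y$: by \eqref{eq:L2-BMO} at $z'$ the remainder $u_0$ satisfies $\fint_{B_d(y)}|\nabla u_0|^2\le C$, and it solves $\ddiv(a_\pm\nabla u_0)=\ddiv\bigl(\vv{f}-a_\pm(\nabla u)_{z',2d}\bigr)$ on $B_d(y)$. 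After rescaling to the unit ball the $C^\alpha$ seminorm of the new divergence data is of order $d^{\alpha-\delta}$ (using $|(\nabla u)_{z',2d}|\le C_\delta d^{-\delta}$ from Corollary~\ref{corollary:more-W12}), which is bounded for $\delta<\alpha$; Schauder applied to $u_0$ then gives the uniform BMO bound on the off-center balls, and your argument closes.
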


\begin{proof} We shall prove the second part of this lemma, since the first part uses a similar argument. Let $\delta\in(0,\alpha)$ be given. Notice that \eqref{eq:control-grad u} and the H\"{o}lder inequality imply
\begin{equation*}
|(\nabla u)_{z,r}|\leq \left(\fint_{B_r(z)}|\nabla u|^2\right)^{\frac{1}{2}}\leq C_\delta r^{-\delta},
\end{equation*}
for any $0<r<\rho_\delta$, where $C_\delta$ and $\rho_\delta$ are chosen as in \eqref{eq:control-grad u}. On the other hand, the John-Nirenberg inequality together with \eqref{eq:L2-BMO} yields that 
\begin{equation*}
\norm{\nabla u - (\nabla u)_{z,r}}_{L^p(B_r(z))}\leq C_0|B_r|^{\frac{1}{p}},
\end{equation*}
whenever $0<r\leq r_0$ with $C_0$ and $r_0$ chosen from Lemma \ref{lemma:L2-BMO}. Putting these two inequalities together, we derive for any $0<r<\min\{\rho_\delta,r_0\}$,
\begin{equation*}
\begin{split}
\norm{\nabla u}_{L^p(B_r(z))}&\leq \left(\norm{\nabla u - (\nabla u)_{z,r}}_{L^p(B_r(z))}+\norm{(\nabla u)_{z,r}}_{L^p(B_r(z))}\right)\leq \tilde{C}_\delta r^{-\delta}|B_r|^{\frac{1}{p}},
\end{split}
\end{equation*}
with $\tilde{C}_\delta = C_0+C_\delta$. This finishes the proof.
\end{proof}

Now we are ready to prove the first assertion of Theorem \ref{theorem:main}.

\begin{proof}[Proof of Theorem \ref{theorem:main} (i)] For the sake of simplicity, let us assume $u(0)=0$. Recall the transformation $\phi$ from Definition \ref{definition:phi} and $v=\phi(u)$. Also recall from Remark \ref{remark:phi} that $v$ is a weak solution to   
\begin{equation*}
\Delta v = \ddiv(\vv{f}(x)+\sigma(x)\nabla u),
\end{equation*}
where $\sigma$ is given by \eqref{eq:sigma}, and more importantly, satisfies \eqref{eq:sigma-Ca}.
By \eqref{eq:cont} and \eqref{eq:control-grad u-W1p}, we can choose a positive $r_0$, depending only on $n$, $\lambda$, $\omega_0$ and $\alpha$, such that 
\begin{equation*}
\left(\fint_{B_r}|\sigma\nabla u|^{n+1}\right)^{\frac{1}{n+1}}\leq\eta r^{\frac{\alpha}{2}},
\end{equation*}
for any $0<r\leq r_0$, where $\eta$ is the constant chosen as in Lemma \ref{lemma:C1a}. From this observation, we conclude that $v$ is $C^{1,\frac{\alpha}{2}}$ at the origin.\footnote{Indeed $v$ is $C^{1,\alpha'}$ at the origin for any $\alpha'<\alpha$. Here we chose the exponent to be $\frac{\alpha}{2}$ just to make sure that our estimate is universal, since otherwise the estimate would depend on $\alpha-\alpha'$.} In particular, we have $\sup_{B_r}|v|\leq C_0r$ for any $0<r\leq r_1$, where $r_1$ and $C_0$ are positive constants depending only on $n$, $\lambda$, $\omega_0$ and $\alpha$. Hence, $\sup_{B_r}|u|\leq C_1r$ with $C_1=C_0/\lambda$, for any $0<r\leq r_1$. 

The argument above can be applied for any $z\in\Gamma(u)\cap B_{1/2}$, from which we obtain $\sup_{B_r(z)}|u|\leq C_1r$ for $0<r\leq r_1$, which amounts to the fact that $u$ is Lipschitz continuous across the free boundary. The rest of the proof follows easily from the standard regularity theory, since $u$ solves smooth PDEs on both $\{x\in B_1:u(x)>0\}$ and $\{x\in B_1:u(x)<0\}$. To be more specific, let $x^0\in B_{1/2}\cap \Gamma(u)$ and $r=\dist(x^0,\Gamma(u))$, and define $u_r(y) = u(ry+x^0)/r$. Then $|u_r|\leq C_1$ in $B_1$ and $\ddiv_y(a_r(y)\nabla_y \tilde{u}) = \ddiv_y(\vv{f}_r(y))$ in $B_1$, where $a_r(y) = a_\pm (ry+x^0)$ (depending on whether $u(x^0)$ is positive or negative) and $\vv{f}_r(y) = \vv{f}(ry+x^0)$. Since $a_r$ and $\vv{f}_r$ still satisfy the structure condition \eqref{eq:cont}, we have $u_r\in C^{1,\alpha}(\bar{B}_{1/2})$ with $\norm{u_r}_{C^{1,\alpha}(\bar{B}_{1/2})}\leq C_2$, where $C_2>0$ depends only on $n$, $\lambda$, $\omega_0$ and $\alpha$. In particular, $|\nabla_x u(x^0)| = |\nabla_y u_r (0)|\leq C_2$, which finishes the proof. 
\end{proof}


\subsection{Analysis on Nondegenerate Points}\label{subsection:C1a-fb}

\begin{lemma}\label{lemma:v-C1a} Suppose that $z\in\Gamma(u)$. Then $\phi_z(u)$ is $C^{1,\alpha}$ at $z$; that is, there is a unique polynomial $P_z$ of degree $1$ such that 
\begin{equation}\label{eq:v-C1a}
|\phi_z(u)(x)-P_z(x)|\leq M_1|x-z|^{1+\alpha},
\end{equation}
for any $x\in B_{r_0}(z)$, and 
\begin{equation}\label{eq:v-C1a-grad}
|\nabla P_z| \leq N_1,
\end{equation}
where $r_0$, $M_1$ and $N_1$ are positive constants depending only on $n$, $\lambda$, $\omega_0$ and $\alpha$. 
\end{lemma}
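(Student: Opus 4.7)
The plan is to re-run the argument used in the proof of Theorem \ref{theorem:main}(i), now equipped with the uniform Lipschitz bound on $\nabla u$ just obtained. That extra input upgrades the earlier pointwise $C^{1,\alpha/2}$ conclusion at free boundary points to the full pointwise $C^{1,\alpha}$ claimed in \eqref{eq:v-C1a}. Concretely, by Remark \ref{remark:phi} the function $v := \phi_z(u)$ satisfies
\begin{equation*}
\Delta v = \ddiv \vv{g}, \qquad \vv{g}(x) := \vv{f}(x) + \sigma_z(x)\nabla u(x),
\end{equation*}
weakly in $B_1$, and I aim to apply the pointwise Schauder estimate, Lemma \ref{lemma:C1a} in Appendix \ref{appendix:ptwise}, to $v$ at $z$.

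After translating so that $z = 0$, I first check that $\vv{g}$ is pointwise $C^\alpha$ at the origin with a universal constant. The hypothesis \eqref{eq:cont} gives $|\vv{f}(x) - \vv{f}(0)| \leq \omega_0 |x|^\alpha$ directly, while Theorem \ref{theorem:main}(i), just established, yields $\norm{\nabla u}_{L^\infty(B_{1/2})} \leq C$ with $C = C(n,\lambda,\omega_0,\alpha)$. Combining this with $\sigma_z(z) = 0$, which is read off from \eqref{eq:sigma}, and with \eqref{eq:sigma-Ca}, I obtain $|\sigma_z(x)\nabla u(x)| \leq C\omega_0 |x|^\alpha$ throughout $B_{1/2}$. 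Hence
\begin{equation*}
\left(\fint_{B_r} |\vv{g} - \vv{f}(0)|^{n+1}\right)^{\frac{1}{n+1}} \leq C_1 r^\alpha, \qquad 0 < r \leq \tfrac{1}{2},
\end{equation*}
for some universal $C_1$. Since $\ddiv$ of the constant vector $\vv{f}(0)$ vanishes, the right-hand side of the PDE for $v$ is driven only by this oscillation, so Lemma \ref{lemma:C1a} applies and outputs a unique linear polynomial $P_z$ with $|v(x) - P_z(x)| \leq M_1|x|^{1+\alpha}$ for $|x|\leq r_0$, where $r_0, M_1$ depend on $n, \lambda, \omega_0, \alpha$ and $\norm{v}_{L^\infty(B_{1/2})}$. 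Because $|u| \leq 1$ and $a_\pm \leq 1/\lambda$ give $|v| \leq 1/\lambda$, these constants are in fact universal, producing \eqref{eq:v-C1a}.

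The bound \eqref{eq:v-C1a-grad} on $|\nabla P_z|$ then follows by evaluating \eqref{eq:v-C1a} at two well-separated points in $B_{r_0}(z)$ and invoking the $L^\infty$ bound $\norm{v}_{L^\infty(B_{r_0}(z))} \leq 1/\lambda$ together with the triangle inequality. I do not foresee a substantive obstacle; the only mildly delicate item is to make sure that the appendix lemma is stated in the form \textquotedblleft the divergence source is pointwise $C^\alpha$ at a single point (after subtracting a constant)\textquotedblright{} rather than requiring a global Hölder modulus, which is exactly the natural setting for pointwise Schauder theory and is presumably how Lemma \ref{lemma:C1a} is formulated, given its use in the proof of Theorem \ref{theorem:main}(i).
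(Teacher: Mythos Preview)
Your proposal is correct and follows essentially the same route as the paper: both reduce to $z=0$, use the Lipschitz bound on $\nabla u$ from Theorem \ref{theorem:main}(i) together with \eqref{eq:sigma-Ca} to verify that $\vv{f}+\sigma_z\nabla u$ satisfies the pointwise $C^\alpha$ hypothesis of Lemma \ref{lemma:C1a}, and then invoke that lemma. The only cosmetic difference is that the paper reads off the bound \eqref{eq:v-C1a-grad} directly from the conclusion \eqref{eq:v-P-Db} of Lemma \ref{lemma:C1a}, whereas you re-derive it from \eqref{eq:v-C1a} and the $L^\infty$ bound on $v$; both are fine.
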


\begin{proof}  It suffices to prove the existence of $P_z$ satisfying the above properties, since then 
  the first inequality in \eqref{eq:v-C1a}  implies the uniqueness.

For the sake of simplicity, let us assume that $z=0$. As we have already observed in Remark \ref{remark:phi}, $\phi(u)$ is a weak solution of \eqref{eq:phi}. From Theorem \ref{theorem:main} (i), we know that $\norm{\nabla u}_{L^\infty(B_1)}\leq L$, which in turn implies the existence of some  $r_0<1$ such that  
\begin{equation*}
\norm{\vv{f}-\vv{f}(0)}_{L^\infty(B_{r_0})}+\norm{\sigma\nabla u}_{L^\infty(B_{r_0})}\leq \eta,
\end{equation*}
where $\eta$ is chosen as in Lemma \ref{lemma:C1a}. Applying Lemma \ref{lemma:C1a} with $\vv{f}$ replaced by $\vv{f}+\sigma\nabla u$, we arrive at the desired conclusion. 
\end{proof}

\begin{notation}\label{notation:pz} Throughout this subsection, we will denote by $P_z$ ($z\in\Gamma(u)$) the approximating polynomial of $\phi_z(u)$ chosen as in Lemma \ref{lemma:v-C1a}, unless stated otherwise.
\end{notation}

Recall from Definition \ref{definition:fb} (ii) that $\cN(u)$ is the nondegenerate part of the nodal set $\Gamma(u)$. It is easy to see that $|\nabla P_z|>0$ for any $z\in\cN(u)$, which enables us to represent unit normals on $\cN(u)$ in terms of $\nabla P_z$ as follows.

\begin{definition}\label{definition:normal} Define a mapping $\nu:\cN(u)\ra \sS^{n-1}$ by 
\begin{equation*}
\nu(z) = \frac{\nabla P_z}{|\nabla P_z|}.
\end{equation*}
\end{definition}

The following lemma justifies why $\nu$ can be understood as the normal mapping of $\Gamma(u)$ towards $\{u>0\}$.

\begin{lemma}\label{lemma:trap-C1a} Suppose that $z\in\cN(u)$. Then $\Gamma(u)$ is locally trapped in between two $C^{1,\alpha}$ graphs touching at $z$. More exactly, one has
\begin{equation}\label{eq:trap-C1a}
B_{r_0}(z)\cap\Gamma(u)\subset\left\{x:|(x-z)\cdot\nu(z)|\leq \frac{C}{\delta}|x-z|^{1+\alpha}\right\},
\end{equation}
where $r_0$ is the same constant chosen in \eqref{eq:v-C1a}, $C$ is a positive constant depending only on $n$, $\lambda$, $\omega_0$ and $\alpha$, and 
\begin{equation*}
\delta = \limsup_{x\ra z} \frac{|u(x)|}{|x-z|}.
\end{equation*}
\end{lemma}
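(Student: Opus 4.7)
The plan is to use Lemma~\ref{lemma:v-C1a} to convert the nondegenerate vanishing of $u$ at $z$ into a concrete lower bound on $|\nabla P_z|$, and then simply read off the trapping inclusion from the pointwise $C^{1,\alpha}$-bound on $\phi_z(u)$.

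For the first step, I would recall that since $u(z)=0$, the definition of $\phi_z(u)$ gives $\phi_z(u)(z) = a_+(z)u^+(z) - a_-(z)u^-(z) = 0$, so the degree-$1$ approximating polynomial from Lemma~\ref{lemma:v-C1a} takes the form
\begin{equation*}
P_z(x) = \nabla P_z \cdot (x-z) = |\nabla P_z|\,\nu(z)\cdot(x-z).
\end{equation*}
The ellipticity bound \eqref{eq:ellip} yields $\lambda|u(x)| \leq |\phi_z(u)(x)| \leq \lambda^{-1}|u(x)|$ for every $x$, so combining this with \eqref{eq:v-C1a} and dividing by $|x-z|$ gives
\begin{equation*}
\lambda\,\frac{|u(x)|}{|x-z|} \leq \frac{|\phi_z(u)(x)|}{|x-z|} \leq |\nabla P_z| + M_1|x-z|^{\alpha}
\end{equation*}
for $x\in B_{r_0}(z)$. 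Passing to $\limsup$ as $x\to z$ and using the definition of $\delta$, this yields the key inequality $|\nabla P_z| \geq \lambda\delta$.

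For the second step, suppose $x \in B_{r_0}(z)\cap \Gamma(u)$, so that $\phi_z(u)(x) = 0$. Then \eqref{eq:v-C1a} becomes $|P_z(x)| \leq M_1|x-z|^{1+\alpha}$, which in view of the explicit form of $P_z$ reads
\begin{equation*}
|\nabla P_z|\,|\nu(z)\cdot(x-z)| \leq M_1|x-z|^{1+\alpha}.
\end{equation*}
Dividing by $|\nabla P_z|$ and applying the lower bound $|\nabla P_z|\geq \lambda\delta$ produces
\begin{equation*}
|\nu(z)\cdot(x-z)| \leq \frac{M_1}{\lambda\delta}\,|x-z|^{1+\alpha},
\end{equation*}
which is precisely \eqref{eq:trap-C1a} with $C = M_1/\lambda$, depending only on $n,\lambda,\omega_0,\alpha$ as required.

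I do not expect any serious obstacle. The only subtle point is the lower bound $|\nabla P_z| \gtrsim \delta$; once one has the two-sided comparison between $|u|$ and $|\phi_z(u)|$ coming from ellipticity, this follows straight from the Taylor expansion of $\phi_z(u)$ at $z$. Everything else is algebra from \eqref{eq:v-C1a} together with the fact that free boundary points are zeros of $\phi_z(u)$.
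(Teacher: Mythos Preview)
Your proposal is correct and follows exactly the same route as the paper's proof, which merely records the two key facts $\Gamma(u)=\Gamma(\phi_z(u))$ and $|\nabla P_z|\ge\lambda\delta$ and then says the conclusion follows from \eqref{eq:v-C1a}. You have simply supplied the details the paper omits: the derivation of $|\nabla P_z|\ge\lambda\delta$ from the two-sided comparison $\lambda|u|\le|\phi_z(u)|\le\lambda^{-1}|u|$ together with the Taylor bound, and the division step yielding $C=M_1/\lambda$.
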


\begin{proof} It is important to notice that $\Gamma(u) = \Gamma(\phi_z(u))$. Due to the choice of $z$, we have $|\nabla P_z|\geq \lambda\delta$. The rest of the proof follows easily from \eqref{eq:v-C1a}. 
\end{proof}

It is noteworthy that the choice of the radius $r_0$ in both \eqref{eq:v-C1a} and \eqref{eq:trap-C1a} has been made uniform over free boundary points, at least in $B_{1/2}$. An immediate consequence is that $\cN(u)$ is relatively open in $\Gamma(u)$, as stated in the following lemma.

\begin{lemma}\label{lemma:rel open} Suppose that $z\in\cN(u)$ and 
\begin{equation*}
\delta = \limsup_{x\ra z} \frac{|u(x)|}{|x-z|}.
\end{equation*}
Then there are positive constants $c_1$ and $c_2$ depending only on $n$, $\lambda$, $\omega_0$ and $\alpha$ such that $\Gamma(u)\cap B_{r_z}(z)\subset\cN(u)$ and that for any $\xi\in\Gamma(u)\cap B_{r_z}(z)$, 
\begin{equation*}
\limsup_{x\ra \xi} \frac{|u(x)|}{|x-\xi|} \geq c_2\delta, 
\end{equation*}
where $r_z>0$ depends only on $n$, $\lambda$, $\omega_0$, $\alpha$ and $z$. 
\end{lemma}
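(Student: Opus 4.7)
The plan is to show that the first-order approximations $P_z$ and $P_\xi$ supplied by Lemma \ref{lemma:v-C1a} at two nearby points of $\Gamma(u)$ have almost identical gradients; since $|\nabla P_z|$ quantifies the nondegeneracy at $z$, this propagates nondegeneracy to all neighbors in $\Gamma(u)$. Throughout the argument I write $L$ for the universal Lipschitz constant of $u$ from Theorem \ref{theorem:main}(i).

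\textit{Step 1: equivalence of $\delta$ and $|\nabla P_z|$.} Since $u=\phi_z(u)/a_\pm(z)$ on $\{\pm u>0\}$ and $\lambda\le a_\pm(z)\le 1/\lambda$, the expansion \eqref{eq:v-C1a} together with the radial test direction $x=z+t\,\nabla P_z/|\nabla P_z|$ as $t\downarrow 0$ gives the two-sided bound $\lambda\delta\le|\nabla P_z|\le\delta/\lambda$. Exactly the same calculation at $\xi$ (once we know $\xi\in\Gamma(u)$ and control $|\nabla P_\xi|$ from below) will produce the final nondegeneracy estimate there.

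\textit{Step 2: comparison of $\phi_\xi(u)$ and $\phi_z(u)$.} By Definition \ref{definition:phi}, the H\"older bound \eqref{eq:cont}, the Lipschitz estimate $|u(x)|\le L|x-\xi|$, and $u(\xi)=0$,
\begin{equation*}
|\phi_\xi(u)(x)-\phi_z(u)(x)|\le \omega_0 L\,|\xi-z|^\alpha|x-\xi|.
\end{equation*}
Combining this with \eqref{eq:v-C1a} at $z$ and noting that $\phi_z(u)(\xi)=0$ forces $|\nabla P_z\cdot(\xi-z)|\le M_1|\xi-z|^{1+\alpha}$, one obtains the representation
\begin{equation*}
\phi_\xi(u)(x)=\nabla P_z\cdot(x-\xi)+E(x),\qquad |E(x)|\le C\bigl(|x-z|^{1+\alpha}+|\xi-z|^{1+\alpha}+|\xi-z|^\alpha|x-\xi|\bigr),
\end{equation*}
valid for $x\in B_{r_0}(\xi)\cap B_{r_0}(z)$ with $C$ universal.

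\textit{Step 3: closeness of $\nabla P_\xi$ and $\nabla P_z$.} Lemma \ref{lemma:v-C1a} applied at $\xi$ gives $\phi_\xi(u)(x)=\nabla P_\xi\cdot(x-\xi)+R_\xi(x)$ with $|R_\xi(x)|\le M_1|x-\xi|^{1+\alpha}$. Subtracting the two representations and evaluating at $x=\xi+s\,e$, where $e$ is the unit vector in the direction of $\nabla P_\xi-\nabla P_z$ and $s=|\xi-z|$, the three heterogeneous error contributions in $E$ all collapse to $\le C'|\xi-z|^{1+\alpha}$, yielding
\begin{equation*}
|\nabla P_\xi-\nabla P_z|\le C_\ast|\xi-z|^\alpha
\end{equation*}
for some universal $C_\ast$. \textit{Step 4: conclusion.} Choose $r_z$ so that $C_\ast r_z^\alpha\le\lambda\delta/2$; an admissible choice is $r_z=c_1\delta^{1/\alpha}$ with $c_1=(\lambda/(2C_\ast))^{1/\alpha}$, truncated by $r_0$ if necessary. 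Then for every $\xi\in\Gamma(u)\cap B_{r_z}(z)$ we have $|\nabla P_\xi|\ge|\nabla P_z|-\lambda\delta/2\ge\lambda\delta/2$, so Step~1 applied at $\xi$ gives $\limsup_{x\to\xi}|u(x)|/|x-\xi|\ge\lambda|\nabla P_\xi|\ge(\lambda^2/2)\delta$, which is the claim with $c_2=\lambda^2/2$.

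The principal technical care is in Step~3: one must pick the test scale $s\sim|\xi-z|$ so that the three inhomogeneous error pieces in $E$ collapse to a single $\lesssim|\xi-z|^{1+\alpha}$ bound and the desired order $|\xi-z|^\alpha$ in $\nabla P_\xi-\nabla P_z$ emerges cleanly. The universality of the radius $r_0$ in Lemma \ref{lemma:v-C1a}, which guarantees that both pointwise expansions are valid on the common ball $B_{r_0}(\xi)\cap B_{r_0}(z)$, is what makes this uniform comparison possible.
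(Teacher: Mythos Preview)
Your proof is correct and takes a genuinely different route from the paper's. The paper argues by growth: from the nondegeneracy $\sup_{B_\rho}u\ge \tfrac12\delta\rho$ at the base point it locates, for each nearby $\xi\in\Gamma(u)$ with $\rho=|\xi-z|$, an explicit point $x^0\in B_{2\rho}(\xi)$ where $u(x^0)\ge\delta\rho$, and then reads off a lower bound for $|\nabla P_\xi|$ from the single inequality $|P_\xi(x^0)|\le 2\rho|\nabla P_\xi|$ combined with \eqref{eq:v-C1a} at $\xi$. Your argument instead compares the two transformations $\phi_z(u)$ and $\phi_\xi(u)$ directly and extracts the stronger statement that $\xi\mapsto\nabla P_\xi$ is $C^\alpha$ along $\Gamma(u)$, from which the nondegeneracy propagation is immediate. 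The paper's approach is slightly more elementary (no need to track the $\omega_0|\xi-z|^\alpha|u|$ cross term), while yours is more informative: the H\"older continuity of $\nabla P_\xi$ that you establish in Step~3 is essentially the content of the proof of Theorem~\ref{theorem:main}\,(ii), so your route nearly subsumes the subsequent free-boundary regularity argument.
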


\begin{proof} Translating the solution to the origin if necessary, it suffices to prove the statement for $z=0$. 
The nondegeneracy constant $\delta$ implies that $\sup_{B_r}u\geq 2^{-1}\delta r$ for all $0<r<\rho_0$, for some $\rho_0>0$. Fix such a radius $r$ and choose any $z\in (B_r\setminus\{0\})\cap \Gamma(u)$. Put $\rho=|z|$. Then $\sup_{B_{2\rho}(z)}u\geq \sup_{B_\rho}u\geq \delta \rho$, from which we are able to select $x^0\in B_{2\rho}(z)$ such that $u(x^0)\geq \delta \rho$. 
By definition, 
\begin{equation*}
\phi_z(u)(x^0) = a_+(z)u(x^0) \geq \lambda \delta \rho.
\end{equation*}
By means of \eqref{eq:v-C1a} and $|x^0-z|\leq 2\rho$, we deduce that
\begin{equation*}
|P_z(x^0)| \geq |\phi_z(u)(x^0)| - |\phi_z(u)(x^0) - P_z(x^0)| \geq \rho(\lambda \delta - 2^{1+\alpha}C_1\rho^\alpha),
\end{equation*}
while 
\begin{equation*}
|P_z(x^0)| = |\nabla P_z\cdot(x^0-z)| \leq 2\rho|\nabla P_z|.
\end{equation*}
Combining the above inequalities, we arrive at 
\begin{equation*}
|\nabla P_z| \geq \frac{\lambda\delta}{4},
\end{equation*}
provided that $\rho$ is restricted by $4C_1(2\rho)^\alpha \leq \lambda\delta$. 
Write $\nu_z= \nabla P_z/|\nabla P_z|$ and $x_r=z + r\nu_z$ for $r>0$. We deduce from \eqref{eq:v-C1a} that
\begin{equation*}
\phi_z(u)(x_r) \geq |\nabla P_z|r - C_1r^{1+\alpha} \geq \left(\frac{\lambda\delta}{4}-C_1r^\alpha\right)r.
\end{equation*}
Therefore, for $r^\alpha \leq \frac{\lambda\delta}{8 C_1}$  there holds
\begin{equation*}
\sup_{B_r(z)} u \geq u(x_r) \geq \lambda \phi_z(u)(x_r) \geq \frac{\lambda^2\delta}{8}r .
\end{equation*}
 Thus, the lemma is proved with $c_1\leq\frac{\lambda}{2^{2+\alpha}C_1}$ and $c_2\leq \frac{\lambda^2}{8}$.
\end{proof}

From the previous lemmas, the  $C^{1,\alpha}$ regularity of the free boundary near a nondegenerate point follows in a standard way. For the reader's convenience  we will prove this 
in detail.

\begin{proof}[Proof of Theorem \ref{theorem:main} (ii)] For brevity, let us assume $z=0\in\Gamma(u)$. Since 
\begin{equation*}
\limsup_{x\ra 0}\frac{|u(x)|}{|x|} >0,
\end{equation*}
we may suppose without losing any generality that 
\begin{equation*}
\limsup_{x\ra 0}\frac{|u(x)|}{|x|} = 1.
\end{equation*}
The general case will follow from a normalization argument. 

Choose any $z\in B_{r_0}\cap \Gamma(u)$ other than the origin, where $r_0$ is selected as in Lemma \ref{lemma:rel open}. Write $\rho=|z|$ and consider any $x\in\p B_\rho(z)\cap \Gamma(u)$. Writing $x=z+\rho e$ with $e$ a unit vector, we may apply Lemma \ref{lemma:trap-C1a} to the origin from which we obtain  
\begin{equation}\label{eq:e-nu0}
\rho|e\cdot\nu(0)|\leq |(\rho e + z)\cdot\nu(0)|+|z\cdot\nu(0)|\leq 3C\rho^{1+\alpha},
\end{equation}
since $|\rho e +z|\leq 2\rho$. 

Now let us choose $e^i$ for $1\leq i\leq n-1$ such that $z+\rho e^i\in\Gamma(u)$ and that $\{e^1,\cdots,e^{n-1},\nu(z)\}$ becomes a basis for $\R^n$. Such a basis always exists due to the uniform nondegeneracy (Lemma \ref{lemma:rel open}) and $C^{1,\alpha}$ approximation result (Lemma \ref{lemma:v-C1a}). Since
\begin{equation*}
\nu(0) = \sum_{i=1}^{n-1}(\nu(0)\cdot e^i)e^i + (\nu(0)\cdot \nu(z))\nu(z),
\end{equation*}
taking the inner product with $\nu(0)$ on both sides, we observe that
\begin{equation*}
1 = \sum_{i=1}^{n-1}(\nu(0)\cdot e^i)^2 + (\nu(0)\cdot \nu(z))^2,
\end{equation*}
from which it follows that
\begin{equation*}
|\nu(z)-\nu(0)|^2 = 2\left\{1-\left(1-\sum_{i=1}^{n-1}(\nu(0)\cdot e^i)^2\right)^{\frac{1}{2}}\right\}\leq 2\sum_{i=1}^{n-1}(\nu(0)\cdot e^i)^2.
\end{equation*}
By \eqref{eq:e-nu0}, we conclude that 
\begin{equation*}
|\nu(z)-\nu(0)|\leq \bar{C}\rho^\alpha,
\end{equation*}
where $\bar{C}\geq 3C\sqrt{2(n-1)}$, finishing the proof.
\end{proof}

\begin{corollary} Let the coefficient of \eqref{eq:main} be given by 
\begin{equation}\label{eq:general}
A(x,u)=a_+(x,u)H(s)+a_-(x,u)(1-H(u)),
\end{equation}
where $a_+$ and $a_-$ satisfy \eqref{eq:ellip} and \eqref{eq:cont} in both $x$ and $u$ variables. Then the same conclusion of Theorem \ref{theorem:main} holds.
\end{corollary}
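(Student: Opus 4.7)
The plan is to reduce the corollary to Theorem \ref{theorem:main} by evaluating the $u$-dependence of the coefficient along the solution itself. Set
\[
\tilde{a}_\pm(x):=a_\pm(x,u(x)),\qquad \tilde{A}(x,s):=\tilde{a}_+(x)H(s)+\tilde{a}_-(x)(1-H(s)).
\]
Then $u$ solves \eqref{eq:main} with coefficient $\tilde{A}$ in place of $A$. The uniform ellipticity bound \eqref{eq:ellip} for $\tilde{a}_\pm$ is inherited trivially from that of $a_\pm$. Thus the entire task is to verify the H\"older condition \eqref{eq:cont} for $\tilde{a}_\pm$ (the assumption on $\vv{f}$ is unchanged), and once this is done the conclusions of Theorem \ref{theorem:main} applied to the frozen problem give precisely the conclusions of the corollary.

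To establish H\"older continuity of $\tilde{a}_\pm$ a short bootstrap is needed. As $u$ is a bounded weak solution of a uniformly elliptic divergence form equation with merely measurable coefficient $A(x,u(x))$, the De Giorgi--Nash--Moser theorem yields $u\in C^{0,\gamma}_{\mathrm{loc}}(B_1)$ for some universal $\gamma=\gamma(n,\lambda)\in(0,1)$. Combined with the joint H\"older continuity assumption on $a_\pm(x,u)$,
\[
|\tilde{a}_\pm(x)-\tilde{a}_\pm(y)|\leq \omega_0\bigl(|x-y|^\alpha+|u(x)-u(y)|^\alpha\bigr)\leq C_0|x-y|^{\alpha\gamma},
\]
with $C_0=C_0(n,\lambda,\omega_0,\alpha)$. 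In particular $\tilde{a}_\pm$ satisfies \eqref{eq:cont} with the reduced exponent $\alpha':=\alpha\gamma$. Applying Theorem \ref{theorem:main} (i) to the frozen problem yields the uniform Lipschitz estimate $\|\nabla u\|_{L^\infty(B_{1/2})}\leq C$, which is conclusion (i). Now iterate: the Lipschitz regularity of $u$ upgrades the estimate to
\[
|\tilde{a}_\pm(x)-\tilde{a}_\pm(y)|\leq \omega_0\bigl(1+\|\nabla u\|_{L^\infty(B_{1/2})}^\alpha\bigr)|x-y|^\alpha,
\]
so $\tilde{a}_\pm$ in fact enjoys the original H\"older exponent $\alpha$. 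Re-invoking Theorem \ref{theorem:main} (ii) with this improved regularity yields the $C^{1,\alpha}$ structure of $\cN(u)$, completing conclusion (ii).

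The main point requiring care is bookkeeping rather than any conceptual novelty: one has to confirm that the constants $C$, $r_0$, $C_0$ produced by Theorem \ref{theorem:main} when applied with the reduced exponent $\alpha'=\alpha\gamma$ remain universal, i.e.\ depend only on $n$, $\lambda$, $\omega_0$, $\alpha$ (and, for the free boundary statement, on the nondegeneracy constant at the base point). This is automatic because $\gamma$ itself is universal. A minor technical point is that the transformation $\phi_z(u)=a_+(z,0)u^+-a_-(z,0)u^-$ underlying the proof of Theorem \ref{theorem:main} is unaffected, since every $z\in\Gamma(u)$ satisfies $u(z)=0$, so the freezing in the $u$-variable costs nothing at free boundary points.
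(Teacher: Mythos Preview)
Your proposal is correct and follows essentially the same approach as the paper: freeze the $u$-dependence along the solution to obtain coefficients $\tilde a_\pm(x)=a_\pm(x,u(x))$, use De Giorgi--Nash--Moser to get $C^\gamma$ regularity of $u$ and hence $C^{\alpha\gamma}$ regularity of $\tilde a_\pm$, apply Theorem \ref{theorem:main} (i) for the Lipschitz bound, then bootstrap $\tilde a_\pm$ to $C^\alpha$ and invoke Theorem \ref{theorem:main} (ii). Your remark about $\phi_z$ freezing at $u(z)=0$ is a nice clarification the paper omits.
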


\begin{proof} Let $u$ be a weak solution of \eqref{eq:main} with $|u|\leq 1$ in $B_1$, where the coefficient, $A(x,u)$, is given by \eqref{eq:general}. By the interior H\"{o}lder regularity theory, we have $u\in C^\gamma(\bar{B}_{7/8})$, for some $0<\gamma<1$ depending only on $n$ and $\lambda$. Setting $b_\pm(x)=a_\pm(x,u(x))$, we observe that $u$ solves the broken PDE, $\ddiv(B(x,u)\nabla u)=0$ in $B_{7/8}$, with $B(x,u)=b_+(x)H(u)+b_-(x)(1-H(u))$. However, $b_\pm\in C^{\alpha\gamma}(\bar{B}_{7/8})$ and $b_\pm$ also satisfies the ellipticity condition \eqref{eq:ellip} of $a_\pm$. Hence, we may apply Theorem \ref{theorem:main} (i) to $u$ and deduce that $u\in C^{0,1}(\bar{B}_{3/4})$. Therefore, the coefficient $b_+$ and $b_-$ become $\alpha$-H\"{o}lder continuous in $\bar{B}_{3/4}$, so now from Theorem \ref{theorem:main} (ii) we get $C^{1,\alpha}$ regularity of the free boundary of $u$ near a nondegenerate point. We leave out the details to the reader. 
\end{proof}


\subsection{Structure Theorem for Degenerate Points}\label{subsection:deg}

From now on, we assume that $\vv{f}\equiv 0$ in \eqref{eq:main} and begin with classification of vanishing orders of degenerate points, namely the points contained in $\cS(u)$. Notice that $\cS(u)$ consists of all vanishing points with finite order which is greater than $1$. A question was raised in \cite{KLS}, asking whether finitely vanishing points are always of integer orders. Here we give an affirmative answer to this question.

\begin{lemma}\label{lemma:u-d} If $z\in\Gamma(u)$ satisfies 
\begin{equation*}
\limsup_{x\ra z}\frac{|u(x)|}{|x-z|^{d-1}}=0,
\end{equation*}
for some integer $d\geq 2$, then there exist positive constants $C_d$ and $r_d$, depending only on $n$, $\lambda$, $\omega_0$, $\alpha$ and $d$, such that
\begin{equation}\label{eq:u-d}
|u(x)|\leq C_d|x-z|^d,
\end{equation}
for any $x\in B_{r_d}(z)$.
\end{lemma}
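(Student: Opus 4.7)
The plan is to set $z=0$ without loss of generality and to work with $v=\phi(u)$, which by Remark \ref{remark:phi} satisfies $\Delta v=\ddiv(\sigma\nabla u)$ (since $\vv{f}\equiv 0$) with $|\sigma(x)|\leq \omega_0|x|^\alpha$. Because $\lambda|u|\leq|v|\leq\lambda^{-1}|u|$, the hypothesis transfers verbatim to $v$, and it suffices to produce $|v(x)|\leq C|x|^d$ on a small $B_{r_d}$. The strategy will be a bootstrap in which the vanishing rate of $u$ (hence of $\sigma\nabla u$, hence of $v$) is improved in increments of $\alpha$ via a higher-order pointwise Schauder estimate taken from the Appendix.

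The inductive step is as follows. Assume $|u(x)|\leq M_k|x|^{\gamma_k}$ on $B_{\rho_k}$ for some $\gamma_k\in[d-1,d)$. Applying the Caccioppoli inequality on dyadic subballs gives
\begin{equation*}
\left(\fint_{B_r}|\nabla u|^2\right)^{1/2}\leq CM_kr^{\gamma_k-1},
\end{equation*}
and combining with the bound on $\sigma$ yields $(\fint_{B_r}|\sigma\nabla u|^2)^{1/2}\leq C\omega_0 M_k r^{\gamma_k-1+\alpha}$. A pointwise Schauder estimate in $L^2$-averaged form, applied to $\Delta v=\ddiv(\sigma\nabla u)$, should then produce a harmonic polynomial $P_k$ of degree at most $\lfloor\gamma_k+\alpha\rfloor$ with $|v(x)-P_k(x)|\leq C|x|^{\gamma_k+\alpha}$ on a smaller ball. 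The hypothesis $v(x)=o(|x|^{d-1})$ combined with this error bound forces $P_k(x)=o(|x|^{d-1})$, so the homogeneous components of $P_k$ of degree at most $d-1$ must all vanish; hence $|P_k(x)|\leq C|x|^d$ on $B_1$ and
\begin{equation*}
|v(x)|\leq C\bigl(|x|^d+|x|^{\gamma_k+\alpha}\bigr).
\end{equation*}
If $\gamma_k+\alpha\geq d$ the argument terminates; otherwise set $\gamma_{k+1}=\gamma_k+\alpha$ and repeat.

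To start the induction at $\gamma_0=d-1$ I would use the qualitative hypothesis to fix some $\eta>0$ and $\rho>0$ with $|u(x)|\leq\eta|x|^{d-1}$ on $B_\rho$, providing the initial constant $M_0=\eta$. In this base case $P_0$ has degree at most $d-1$, so the killing step forces $P_0\equiv 0$ and we gain strictly, reaching $\gamma_1=d-1+\alpha$. Since $\alpha>0$ is fixed, at most $\lceil 1/\alpha\rceil$ further iterations are required to reach $\gamma_k+\alpha\geq d$, after which $|v(x)|\leq C_d|x|^d$ and the conclusion follows from $|u|\leq\lambda^{-1}|v|$.

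The main obstacle is the availability and precise form of a higher-order pointwise Schauder estimate with $L^2$-averaged right-hand side that permits polynomial approximation up to degree $d$; Lemma \ref{lemma:C1a} handles the degree-one case and the Appendix presumably extends it to arbitrary order. A secondary technical point is the borderline case in which $\gamma_k+\alpha$ is an integer and the degree of $P_k$ coincides with the error exponent, where the killing step becomes delicate; this can be absorbed by running the iteration with an exponent perturbed down by a tiny $\epsilon>0$. Finally, care must be taken to verify that the constants $M_k$ remain controlled by a quantity depending only on $n,\lambda,\omega_0,\alpha,d$ across the (bounded) number of iterations, with the dependence on $\eta$ from the base step absorbed once into the final constant $C_d$.
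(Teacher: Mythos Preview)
Your bootstrap strategy is essentially the paper's: work with $v=\phi(u)$, exploit $|\sigma(x)|\leq\omega_0|x|^\alpha$ to improve the vanishing exponent by $\alpha$ at each pass via a pointwise Schauder estimate, and invoke the hypothesis to annihilate low-degree polynomial pieces. Your handling of the integer-exponent borderline (perturb $\alpha$ to an irrational value) also matches what the paper does. There is, however, one genuine gap and one technical mismatch.

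The gap is in your base step. You start from $|u(x)|\leq\eta|x|^{d-1}$ on $B_\rho$, with $\eta$ and $\rho$ extracted from the qualitative hypothesis and therefore depending on the particular solution $u$; you then propose to ``absorb $\eta$ once into the final constant $C_d$.'' But the lemma asserts that \emph{both} $C_d$ and $r_d$ depend only on $n,\lambda,\omega_0,\alpha,d$, not on $u$. Even if the Schauder output constant is universal once its smallness hypothesis is verified, the radius on which your estimate holds is governed by $\rho$, and this $u$-dependence propagates through every subsequent iteration; you end with $|u(x)|\leq C_d|x|^d$ on a ball whose size depends on $u$. The paper avoids this entirely: for $d=2$ it does \emph{not} start from the hypothesis but from the already-universal Lemma~\ref{lemma:v-C1a}, which gives $|v(x)-P(x)|\leq M_1|x|^{1+\alpha}$ on $B_{r_0}$ with $M_1,r_0$ universal and $P$ linear. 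The qualitative hypothesis is invoked only to force $P\equiv 0$, after which $|v(x)|\leq M_1|x|^{1+\alpha}$ holds with universal constant on a universal ball, and every subsequent iterate stays universal in both constant and radius. For $d>2$ one argues by induction on $d$. You should replace your base step with this device: use the hypothesis solely as a qualitative trigger that kills the approximating polynomial, never as a source of quantitative bounds.

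The technical mismatch you already flagged is real: Lemma~\ref{lemma:C1a} in the Appendix requires $L^p$ control of the right-hand side with $p>n$, not $L^2$. The paper supplies this not via Caccioppoli but via the interior $W^{1,p}$ estimate applied to the rescaled equation $\Delta v_r=\ddiv(\sigma_r\nabla u_r)$, whose right-hand side is bounded thanks to Theorem~\ref{theorem:main}(i) and $|\sigma|\leq\omega_0|x|^\alpha$; see \eqref{eq:vr}--\eqref{eq:vr-Linf}. This is a routine upgrade of your Caccioppoli step.
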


\begin{proof} Without loss of generality we may assume $z=0$. We shall only prove this for the case  $d=2$. The proof for  $d> 2$ follows by a similar reasoning, and is left to the reader. 

Write $v=\phi(u)$. By the assumption, we have $\nabla P=0$, where $P$ is the approximating linear  polynomial of $v$ chosen from Lemma \ref{lemma:v-C1a}. Since $P(0) = 0$, we conclude that $P$ is identically zero. Hence, the inequality \eqref{eq:v-C1a} is reduced as
\begin{equation}\label{eq:v-C1a-deg}
|v(x)|\leq M_1|x|^{1+\alpha},
\end{equation}
for any $x\in B_{r_0}$, with $M_1$ being the constant chosen from Lemma \ref{lemma:v-C1a}.

Let $v_r(x)=\frac{1}{r^{1+\alpha}}v(rx)$. In view of \eqref{eq:phi}, one may easily verify that
\begin{equation}\label{eq:vr}
\Delta v_r=\ddiv(\sigma_r(x)\nabla u_r),
\end{equation}
where $\sigma_r(x)=\frac{1}{r^\alpha}\sigma(rx)$ and $u_r(x)=\frac{u(rx)}{r}$. Using Theorem \ref{theorem:main} (i) and the structure condition \eqref{eq:cont}, we obtain
\begin{equation}\label{eq:sigmar}
\norm{\sigma_r\nabla u_r}_{L^\infty(B_1)}\leq C_1,
\end{equation}
for any $0<r\leq r_0$, where $C_1$ is a constant depending only on $n$, $\lambda$, $\omega_0$ and $\alpha$. Also it follows from \eqref{eq:v-C1a-deg} that 
\begin{equation}\label{eq:vr-Linf}
\norm{v_r}_{L^\infty(B_1)}\leq M_1,
\end{equation}
for any $0<r\leq r_0$. Applying the interior $W^{1,p}$ estimate ($p>n$) on \eqref{eq:vr} and utilizing the estimates \eqref{eq:sigmar} and \eqref{eq:vr-Linf}, we obtain
\begin{equation*}
\left(\fint_{B_{1/2}}|\nabla v_r|^p\right)^{1/p}\leq C_p(M_1+C_1),
\end{equation*}
where $C_p$ is another positive constant depending only on $n$, $\lambda$, $\omega_0$, $\alpha$ and $p$. 
Scaling back, we will have for any $r\leq\frac{r_0}{2}$, 
\begin{equation*}
\left(\fint_{B_r}|\sigma\nabla u|^p\right)^{1/p}\leq\omega_0r^\alpha\left(\fint_{B_r}|\nabla u|^p\right)^{1/p}\leq\frac{\omega_0r^{2\alpha}}{\lambda}\left(\fint_{B_{1/2}}|\nabla v_r|^p\right)^{1/p}\leq\frac{\omega_0\tilde{C}_p}{\lambda}r^{2\alpha},
\end{equation*} 
with $\tilde{C}_p = C_p(M_1+C_1)$. 

Replacing $\alpha$ by an irrational number $\alpha'<\alpha$ if necessary, we may always assume that $\alpha$ is irrational. Now if $2\alpha<1$, we may apply Lemma \ref{lemma:C1a} with $d=0$ and $p=n+1$ such that
\begin{equation*}
|v(x)|\leq K_1|x|^{1+2\alpha}\quad\text{in }B_{r_1},
\end{equation*}
where $r_1=r_0/2$ and $K_1$ is a positive constant depending only on $n$, $\lambda$, $\omega_0$ and $\alpha$. In comparison with \eqref{eq:v-C1a-deg}, we have achieved an improvement of the decay rate of $v$ around the origin. 

Notice that we are able to iterate the above argument so as to obtain
\begin{equation*}
|v(x)|\leq K_{m-1}|x|^{1+m\alpha}\quad\text{in }B_{r_{m-1}},
\end{equation*}
where $m$ is the largest integer so that $m\alpha<1$ and $K_m$ a constant depending only on $n$, $\lambda$, $\omega_0$, $\alpha$ and $m$. Now as $\alpha$ being irrational, we have $(m+1)\alpha>1$. Set $\alpha_m=(m+1)\alpha-1$ and apply the argument above once again. This leads us to
\begin{equation*}
\left(\fint_{B_r}|\sigma\nabla u|^p\right)^{1/p}\leq L_mr^{(m+1)\alpha}=L_mr^{1+\alpha_m},
\end{equation*}
with $L_m$ being a positive constant depending only on $n$, $\lambda$, $\omega_0$, $\alpha$ and $m$. Then we can apply the same lemma with $d=1$, from which we obtain a homogeneous harmonic polynomial $P$ of degree $2$ such that 
\begin{equation}\label{eq:v-C2am-deg}
|v(x)-P(x)|\leq K_m|x|^{2+\alpha_m},
\end{equation}
for any $x\in B_{r_m}$, and 
\begin{equation}\label{eq:v-C2am-deg-hess}
\norm{D^2 P} \leq \tilde{K}_m,
\end{equation}
where $r_m$, $K_m$ and $\tilde{K}_m$ are positive constants depending only on $n$, $\lambda$, $\omega_0$, $\alpha$ and $m$. Notice that $m$ is a parameter determined only by $\alpha$. Our conclusion \eqref{eq:u-d} is now an easy consequence of \eqref{eq:v-C2am-deg} and the triangle inequality, which makes our proof complete.
\end{proof}

\begin{definition}\label{definition:Sd} Given a continuous function $v$ and a positive integer $d\geq 2$, define 
\begin{equation*}
\cS_d(v) = \left\{z\in\Gamma(v): 0<\limsup_{x\ra z}\frac{|v(x)|}{|x-z|^d}<\infty\right\}.
\end{equation*}
\end{definition}

It is an easy consequence from Lemma \ref{lemma:v-Cda-deg} that 
\begin{equation*}
\cS(u) = \bigcup_{d\geq 2}\cS_d(u).
\end{equation*}
Due to Lemma \ref{lemma:u-d}, we are able to proceed with a higher order approximation of $\phi_z(u)$ at any degenerate point $z$ as in the previous subsection. As a result, we derive the following  two lemmas, whose proofs are omitted and left to the reader. 

\begin{lemma}\label{lemma:v-Cda-deg} 
Suppose that 
\begin{equation*}
\limsup_{x\ra z}\frac{|u(x)|}{|x-z|^{d-1}}=0
\end{equation*}
at a point $z\in\cS(u)$ for some integer $d\geq 2$. Then $\phi_z(u)$ is $C^{d,\alpha}$ at $z$; that is, there exists a unique harmonic polynomial $P_z$ such that $x\mapsto P_z(x+z)$ is homogeneous of degree greater than or equal to $d$ and 
\begin{equation}\label{eq:v-Cda-deg}
|\phi_z(u)(x)-P_z(x)|\leq M_d|x-z|^{d+\alpha},
\end{equation}
for any $x\in B_{r_d}(z)$, and 
\begin{equation}\label{eq:v-Cda-deg-Db}
\sum_{|\beta|=d}|D^\beta P_z|\leq N_d,
\end{equation}
where $r_d$, $M_d$ and $N_d$ are constants depending only on $n$, $\lambda$, $\omega_0$, $\alpha$ and $d$. 
\end{lemma}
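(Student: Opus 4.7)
The plan is to take $z=0$ without loss of generality and invoke Lemma \ref{lemma:u-d} to get $|u(x)|\leq C_d|x|^d$ on some ball $B_{r_d}$, and hence $|v(x)|\leq \lambda^{-1}C_d|x|^d$ for $v:=\phi(u)$. The idea is then to feed this growth, together with $|\sigma(x)|\leq \omega_0|x|^\alpha$, into the pointwise Schauder machinery from the appendix (the same tool used with parameters $d=0,1$ in the proof of Lemma \ref{lemma:u-d}, now invoked at order $d$), applied to the equation $\Delta v=\ddiv(\sigma\nabla u)$ from Remark \ref{remark:phi} (recall $\vv{f}\equiv 0$ in this subsection).

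To quantify the required $L^p$ decay of $\sigma\nabla u$ at the origin, I would rescale: $u_r(x):=u(rx)/r^d$ is uniformly bounded by $C_d$ on $B_1$ and satisfies a broken divergence-form PDE with zero right-hand side and coefficients still controlled between $\lambda$ and $\lambda^{-1}$. For any fixed $p<\infty$, the interior $W^{1,p}$ estimate yields $(\fint_{B_{1/2}}|\nabla u_r|^p)^{1/p}\leq C$ uniformly in $r$. Scaling back and using the H\"older bound on $\sigma$ gives
\[
\left(\fint_{B_r}|\sigma\nabla u|^p\right)^{1/p}\leq C_p\, r^{d-1+\alpha},\qquad 0<r\leq r_d/2.
\]

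Feeding this decay into the appendix's pointwise Schauder estimate at order $d$ then produces a harmonic polynomial $Q$ of degree at most $d$ with
\[
|v(x)-Q(x)|\leq M_d|x|^{d+\alpha}\quad\text{in } B_{r_d},\qquad \sum_{|\beta|\leq d}\frac{|D^\beta Q(0)|}{\beta!}\leq N_d.
\]
To strip the low-degree terms from $Q$, I would combine this with $|v(x)|\leq \lambda^{-1}C_d|x|^d$ to deduce $|Q(x)|\leq C'|x|^d$ near the origin; since $Q$ is a polynomial of degree $\leq d$, this forces every monomial of degree strictly less than $d$ to vanish, leaving $Q$ either homogeneous of degree exactly $d$ or identically zero. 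Setting $P_z:=Q$ then yields \eqref{eq:v-Cda-deg} and \eqref{eq:v-Cda-deg-Db}, while uniqueness follows from \eqref{eq:v-Cda-deg} exactly as in Lemma \ref{lemma:v-C1a}.

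The main technical obstacle is confirming that the pointwise Schauder tool from the appendix extends to this $d$th-order form with constants depending only on $n$, $\lambda$, $\omega_0$, $\alpha$ and $d$, i.e.\ that sharp $L^p$ decay of the divergence-form right-hand side at rate $r^{d-1+\alpha}$ translates into a Taylor-type approximation by a harmonic polynomial of degree $\leq d$ with error of order $|x|^{d+\alpha}$. This is the natural higher-order analogue of Lemma \ref{lemma:C1a} (used there at $d=0,1$), and once it is in place the rest of the argument is a straightforward repetition of the scheme used for Lemmas \ref{lemma:v-C1a} and \ref{lemma:u-d}.
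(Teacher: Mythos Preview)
Your approach is the one the paper intends (the paper omits the proof and says to proceed ``as in the previous subsection'' using Lemma~\ref{lemma:u-d}): obtain $|u|\le C_d|x|^d$ from Lemma~\ref{lemma:u-d}, convert to the decay $(\fint_{B_r}|\sigma\nabla u|^p)^{1/p}\le Cr^{d-1+\alpha}$, and feed this into the pointwise Schauder lemma. Note that Lemma~\ref{lemma:C1a} in the appendix is already stated for any integer $d\ge 1$, so the ``main technical obstacle'' you identify is not actually present; the higher-order tool is available as written.

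One point to tighten: the interior $W^{1,p}$ bound on $u_r:=u(r\cdot)/r^d$ does \emph{not} follow from the ellipticity $\lambda\le A\le\lambda^{-1}$ alone, as you seem to suggest; for merely bounded measurable coefficients only a Meyers-type $p>2$ close to $2$ is available in general. The fix is immediate with the paper's own machinery: $u_r/C_d$ again solves an equation of the form \eqref{eq:main} with $\vv f\equiv 0$ and rescaled coefficients $a_\pm(r\cdot)$ that still satisfy \eqref{eq:ellip}--\eqref{eq:cont} uniformly in $r$, so Lemma~\ref{lemma:more-W1p} (proved via the $\phi$-transformation, the $L^2$--BMO estimate of Lemma~\ref{lemma:L2-BMO}, and John--Nirenberg) applies verbatim to give $(\fint_{B_{1/2}}|\nabla u_r|^p)^{1/p}\le C_p$. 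With this amendment your argument is complete and matches the paper's route.
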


\begin{lemma}\label{lemma:rel open-deg} Suppose $z\in\cS_d(u)$ and 
\begin{equation*}
0 < \delta =\limsup_{x\ra z}\frac{|u(x)|}{|x-z|^d}.
\end{equation*} 
Then there are positive constants $c_1$ and $c_2$, depending only on $n$, $\lambda$, $\omega_0$, $\alpha$ and $d$, such that $B_{r_z}(z)\cap(\Gamma(u)\setminus\bigcup_{k\leq d-1}\cS_k(u))\subset \cS_d(u)$ for some $0<r_z<(c_1\delta)^{1/\alpha}$ and that 
\begin{equation*}
\limsup_{x\ra\xi}\frac{|u(x)|}{|x-\xi|^d}\geq c_2\delta,
\end{equation*}
for any $\xi\in B_{r_z}(z)\cap (\Gamma(u)\setminus\bigcup_{k\leq d-1}\cS_k(u))$.
\end{lemma}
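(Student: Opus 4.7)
The plan is to imitate the argument for Lemma \ref{lemma:rel open}, one order up: transport the nondegeneracy from $z$ through a carefully chosen point near $\xi$, use the $C^{d,\alpha}$ polynomial approximation of Lemma \ref{lemma:v-Cda-deg} at $\xi$ to convert it into a lower bound on the approximating polynomial $P_\xi$, and then reverse the approximation to conclude both that $\xi\in\cS_d(u)$ and that its nondegeneracy constant is at least $c_2\delta$. Translate so that $z=0$, fix $\xi\in B_{r_z}\cap(\Gamma(u)\setminus\bigcup_{k\leq d-1}\cS_k(u))$, and write $\rho=|\xi|$.

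The first preparatory step is to verify that Lemma \ref{lemma:v-Cda-deg} is applicable at $\xi$, that is, $\limsup_{x\to\xi}|u(x)|/|x-\xi|^{d-1}=0$. Starting from the Lipschitz bound of Theorem \ref{theorem:main}(i) (which gives $\limsup_{x\to\xi}|u(x)|/|x-\xi|^0<\infty$) and using the hypothesis that $\xi\notin\cS_k(u)$ for $k=1,2,\ldots,d-1$ (with the convention $\cS_1(u)=\cN(u)$), a bootstrap through integer orders $k=2,3,\ldots,d$ via Lemma \ref{lemma:u-d} pushes the vanishing order at $\xi$ up to $d$. Once this is in hand, Lemma \ref{lemma:v-Cda-deg} at $\xi$ delivers a homogeneous harmonic polynomial $P_\xi$ of degree $\geq d$ with $|\phi_\xi(u)(x)-P_\xi(x-\xi)|\leq M_d|x-\xi|^{d+\alpha}$ near $\xi$, satisfying the universal coefficient bound \eqref{eq:v-Cda-deg-Db}.

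The next step is to transplant nondegeneracy from $0$ to $\xi$. From $\delta=\limsup_{x\to 0}|u(x)|/|x|^d$, for small $\rho$ one has $\sup_{B_\rho(0)}|u|\geq(\delta/2)\rho^d$; since $B_\rho(0)\subset B_{2\rho}(\xi)$, there is $x^0\in\overline{B_{2\rho}(\xi)}$ with $|u(x^0)|\geq(\delta/2)\rho^d$ and hence $|\phi_\xi(u)(x^0)|\geq(\lambda\delta/2)\rho^d$. Plugging this into the approximation yields
\[|P_\xi(x^0-\xi)|\geq\tfrac{\lambda\delta}{2}\rho^d-M_d(2\rho)^{d+\alpha},\]
which exceeds $c\delta\rho^d$ as long as $\rho^\alpha\leq c_1\delta$ for a suitable universal $c_1$ --- precisely the restriction $r_z<(c_1\delta)^{1/\alpha}$ announced in the statement. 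To exclude $\deg P_\xi>d$ I would invoke Lemma \ref{lemma:u-d} once more: if the vanishing order at $\xi$ were strictly greater than $d$, that lemma applied at the integer $d+1$ would give $|u(x)|\leq C_{d+1}|x-\xi|^{d+1}$ near $\xi$, which at $x=x^0$ (where $|x^0-\xi|\leq 2\rho$) contradicts $|u(x^0)|\geq(\delta/2)\rho^d$ whenever $\rho\lesssim\delta$. Thus $P_\xi$ is of exact degree $d$, and combining the lower bound $|P_\xi(x^0-\xi)|\geq c\delta\rho^d$ with $|x^0-\xi|\leq 2\rho$ gives $\sup_{|y|=1}|P_\xi(y)|\geq c'\delta$, so by homogeneity $\sup_{|y|=r}|P_\xi(y)|\geq c'\delta\,r^d$ for every $r>0$. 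Reinserting into the approximation of $\phi_\xi(u)$ on $B_r(\xi)$ and absorbing the $r^{d+\alpha}$ error for $r^\alpha\lesssim\delta$ then produces $\sup_{B_r(\xi)}|u|\geq c_2\delta\,r^d$, from which $\limsup_{x\to\xi}|u(x)|/|x-\xi|^d\geq c_2\delta$; finiteness of this $\limsup$, the remaining requirement for $\xi\in\cS_d(u)$, is automatic from the $C^{d,\alpha}$ control already established at $\xi$.

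The main obstacle is purely bookkeeping: a handful of smallness conditions on $\rho$ --- from each step of the bootstrap through Lemma \ref{lemma:u-d}, from error absorption in the $C^{d,\alpha}$-approximation at $\xi$, from the contradiction argument excluding $\deg P_\xi>d$, and from the final error absorption when converting the lower bound on $P_\xi$ into a lower bound on $\sup_{B_r(\xi)}|u|$ --- must all collapse into the single radius $r_z<(c_1\delta)^{1/\alpha}$. A secondary subtlety is that the union $\bigcup_{k\leq d-1}\cS_k(u)$ must be read as including the nondegenerate stratum $\cN(u)=\cS_1(u)$; otherwise, nearby linearly-nondegenerate points would lie in the left-hand side of the announced inclusion and need a separate exclusion argument, presumably via the relative openness of $\cN(u)$ already established in Lemma \ref{lemma:rel open}.
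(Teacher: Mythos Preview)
The paper omits the proof of this lemma, stating only that it follows by the same reasoning as Lemma~\ref{lemma:rel open}; your proposal is exactly that adaptation and is correct. Two remarks on details: First, the assertion $\sup_{B_\rho(0)}|u|\geq(\delta/2)\rho^d$ does not follow directly from the definition of $\limsup$ (which only gives large values of $|u(x)|/|x|^d$, not of $|u(x)|/\rho^d$); you should instead invoke the $C^{d,\alpha}$ approximation at $z=0$ to get $\sup_{|e|=1}|P_0(e)|\geq\lambda\delta$ and then transfer this to $\sup_{B_\rho}|u|\geq c\delta\rho^d$ for $\rho^\alpha\lesssim\delta$---this is one more constraint of the same shape you already collect. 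Second, your observation that the linear constraint $\rho\lesssim\delta$ (from excluding $\deg P_\xi>d$) must be absorbed into $r_z<(c_1\delta)^{1/\alpha}$ is correct and works because $\delta\leq C_d$ by Lemma~\ref{lemma:u-d} applied at $z$, so $(c_1\delta)^{1/\alpha}\leq c'\delta$ once $c_1$ is small enough; this is the one place where the announced form of $r_z$ is not automatic.
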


In view of  Lemma \ref{lemma:v-Cda-deg}, we can  update Notation \ref{notation:pz} as follows. 

\begin{notation}\label{notation:pz-deg} From now on to the end of this section, $P_z$ ($z\in\cS(u)$) will stand for the approximating polynomial of $\phi_z(u)$ chosen from Lemma \ref{lemma:v-Cda-deg}.
\end{notation}

The following lemma is proved in \cite{H} but we provide the proof for the sake of completeness. 

\begin{lemma}\label{lemma:hom poly} Let $P$ be nontrivial homogeneous polynomial of degree $d\geq 1$. Then 
\begin{equation}\label{eq:hom poly-1}
\cS_d(P) = \{z\in \R^n: D^\nu P(z)=0\text{ for any }|\nu|\leq d-1\}.
\end{equation}
Moreover, 
\begin{equation}\label{eq:hom poly-2}
P(x+z) = P(x)\quad (x\in\R^n,z\in\cS_d(P)),
\end{equation}
and furthermore, $\cS_d(P)$ is a linear subspace in $\R^n$. In particular, if we further assume that $P$ is harmonic with degree $d\geq 2$, then $\dim(\cS_d(P))\leq n-2$. 
\end{lemma}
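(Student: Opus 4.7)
The plan is to first derive the characterization \eqref{eq:hom poly-1}, then read off the translation invariance \eqref{eq:hom poly-2} as a direct consequence, and finally use both together to get the subspace structure and the dimension bound.

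First I would Taylor-expand $P$ around an arbitrary $z \in \R^n$. Because $P$ is a homogeneous polynomial of degree $d$, its top-degree coefficients $D^\nu P$ ($|\nu|=d$) are constants, and the degree-$d$ part of the expansion in $(x-z)$ is simply $P(x-z)$. Hence
\begin{equation*}
P(x) \;=\; P(x-z) \;+\; \sum_{|\nu|\leq d-1} \frac{D^\nu P(z)}{\nu!}\,(x-z)^\nu.
\end{equation*}
If all derivatives $D^\nu P(z)$ vanish for $|\nu|\leq d-1$, the remainder on the right disappears, so $|P(x)|/|x-z|^d$ stays bounded and (since $P$ is nontrivial and homogeneous) has positive $\limsup$ equal to $\max_{|\omega|=1}|P(\omega)|$; this puts $z \in \cS_d(P)$. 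Conversely, if some $D^\nu P(z) \neq 0$ with $|\nu|=k\leq d-1$ while all lower derivatives vanish, then $P(x)-P(x-z)$ has a nonzero degree-$k$ leading term in $(x-z)$, forcing $|P(x)|/|x-z|^d$ to blow up along a suitable direction and thus excluding $z$ from $\cS_d(P)$. This gives \eqref{eq:hom poly-1}, and the Taylor identity immediately specializes to $P(x) = P(x-z)$, i.e.\ \eqref{eq:hom poly-2}.

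Next I would verify that $\cS_d(P)$ is a linear subspace. Closure under scaling is built into the characterization: each $D^\nu P$ with $|\nu|\leq d-1$ is itself a homogeneous polynomial of degree $d-|\nu|\geq 1$, so $D^\nu P(z) = 0$ implies $D^\nu P(\lambda z) = \lambda^{d-|\nu|}D^\nu P(z) = 0$ for every $\lambda$. Closure under addition is cleanest via \eqref{eq:hom poly-2}: given $z_1,z_2 \in \cS_d(P)$, iterate $P(x+z_1+z_2)=P(x+z_2)=P(x)$; differentiating in $x$ and evaluating at $x=0$ yields $D^\nu P(z_1+z_2) = D^\nu P(0) = 0$ for all $|\nu|\leq d-1$, so $z_1+z_2 \in \cS_d(P)$ by the characterization.

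For the final assertion, assume $P$ is harmonic with $d\geq 2$ and suppose for contradiction that $\dim\cS_d(P)\geq n-1$. Pick an orthonormal frame with $\cS_d(P)\supset \operatorname{span}\{e_1,\dots,e_{n-1}\}$; then \eqref{eq:hom poly-2} forces $P$ to be independent of $x_1,\dots,x_{n-1}$, so $P(x) = c\,x_n^d$ by homogeneity, and $\Delta P = c\,d(d-1)\,x_n^{d-2}$. Harmonicity combined with $d\geq 2$ then compels $c=0$, contradicting nontriviality. The main (minor) obstacle is just keeping track of the Taylor expansion carefully enough to justify that the degree-$d$ piece is exactly $P(x-z)$; after that, everything else is bookkeeping with homogeneity and a one-variable reduction.
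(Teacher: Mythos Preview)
Your proof is correct and follows essentially the same route as the paper's: both arguments rest on the Taylor expansion of $P$ about $z$, the observation that the top-degree part is $P(\cdot - z)$ by homogeneity, and the one-variable reduction for the harmonic dimension bound. The only cosmetic differences are that the paper dismisses \eqref{eq:hom poly-1} in one line and derives scalar closure via $P(x+rz)=r^dP(r^{-1}x+z)=r^dP(r^{-1}x)=P(x)$, whereas you give the explicit Taylor computation for \eqref{eq:hom poly-1} and argue scalar closure directly from the homogeneity of $D^\nu P$; neither choice changes the substance.
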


\begin{proof} Identity \eqref{eq:hom poly-1} follows immediately from the homogeneity of $P$. The proof of \eqref{eq:hom poly-2} is also elementary. If $z\in\cS_d(P)$, then $D^\beta P(z) = 0$ for any $|\beta|\leq d-1$, whence for any $x\in\R^n$, 
\begin{equation*}
P(x+z) = \sum_{|\beta|\leq d}\frac{D^\beta P(z)}{\beta!}x^\nu = \sum_{|\beta|=d}\frac{D^\beta P(z)}{\beta!}x^\beta = \sum_{|\beta|=d}\frac{D^\beta P(0)}{\beta!}x^\beta = P(x),
\end{equation*}
which proves \eqref{eq:hom poly-2}.

Now since $P$ is homogenous of degree $d$, for any nonzero $r\in\R$, 
\begin{equation*}
P(x+rz) = r^d P(r^{-1}x+z) = r^d P(r^{-1}x) = P(x)\quad (x\in\R^n),
\end{equation*}
where we have used \eqref{eq:hom poly-2} in the second equality. Again by the Taylor expansion, 
\begin{equation*}
\sum_{|\beta|\leq d-1}\frac{D^\beta P(rz)}{\beta!}x^\beta = 0\quad (x\in\R^n),
\end{equation*}
which implies that 
\begin{equation*}
D^\beta P(rz) = 0\quad\text{for any }|\beta|\leq d-1.
\end{equation*}
Thus, $rz\in \cS_d(P)$ for all $r\in\R$ and $z\in\cS_d(P)$. In a similar way, we can prove that $y+z\in\cS_d(P)$, if $y\in\cS_d(P)$ and $z\in\cS_d(P)$, which proves that $\cS_d(P)$ is a linear subspace of $\R^n$. 

For the last statement of this lemma, we observe that $P$ is a polynomial of $n-\dim(\cS_d(P))$ variables. Now if we assume that $\dim(\cS_d(P))=n-1$, then $P$ becomes a nontrivial monomial of degree $d\geq 2$, which cannot be a harmonic function. Thus, $\dim(\cS_d(P))\leq n-2$, which proves the last part of the lemma.
\end{proof}

We are now ready to prove our second main theorem. Fix an integer $d\geq 2$. We are going to prove the statement for $\cS_d(u)$ instead of $\cS(u)$. Then the proof follows from the fact that $\cS(u)$ is a countable union of $\cS_d(u)$ for all $d\geq 2$. 

As considered in \cite{H}, let us further decompose $\cS_d(u)$ by 
\begin{equation*}
\cS_d^j(u) = \{z\in\cS_d(u): \dim(\cS_d(P_z)) = j\},\quad j=0,1,\cdots,n-2,
\end{equation*}
where $P_z$ is the approximating harmonic polynomial (of $\phi_z(u)$) chosen in Lemma \ref{lemma:v-Cda-deg}. By Lemma \ref{lemma:hom poly}, we know that
\begin{equation*}
\cS_d(u) = \displaystyle\bigcup_{j=0}^{n-2}\cS_d^j(u).
\end{equation*}

In the subsequent two lemmas, we show that for each $z\in\cS_d^j(u)$, there exists $r_z>0$ such that $B_{r_z}(z)\cap \cS_d^j(u)$ is a $j$-dimensional $C^1$ graph. The essential idea of these lemmas follows from \cite[Lemma 2.2]{H}. However, we need to be very careful when constructing blowup limits, since here our transformation $\phi_z(u)$ is $C^{d,\alpha}$ only at $z\in\cS_d^j(u)$ but not necessarily at other points in $\cS_d^j(u)$. 

First we prove that there is a lower-dimensional tangent subspace at each point in $\cS_d^j(u)$. 

\begin{lemma}\label{lemma:main-deg-tangent} If $0\in\cS_d^j(u)$, then $\cS_d(P_0)$ is a tangent linear subspace to $\cS_d^j(u)$ at $0$, where $P_0$ is the approximating harmonic polynomial (of $\phi(u)$) chosen in Lemma \ref{lemma:v-Cda-deg}.
\end{lemma}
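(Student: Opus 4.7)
The plan is to show that for every sequence $\{z_k\} \subset \cS_d^j(u)$ with $z_k \to 0$, any subsequential limit $\xi$ of $\xi_k := z_k/|z_k|$ belongs to $\cS_d(P_0)$. Since $\cS_d(P_0)$ is a linear subspace by Lemma \ref{lemma:hom poly}, this readily yields $\dist(z_k, \cS_d(P_0)) = o(|z_k|)$, which is the desired tangency. The strategy is to compare the two $C^{d,\alpha}$ polynomial approximations provided by Lemma \ref{lemma:v-Cda-deg}: the polynomial $P_0$ approximating $\phi_0(u)$ at the origin, and the polynomial $P_{z_k}$ approximating $\phi_{z_k}(u)$ at $z_k$, where $y\mapsto P_{z_k}(z_k+y)$ is homogeneous of degree exactly $d$ (the degree is not larger than $d$, because otherwise $\phi_{z_k}(u)$ would vanish strictly faster than $|y|^d$ at $z_k$, contradicting $z_k \in \cS_d(u)$).

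The key technical step is to control $P_{z_k} - P_0$ on $B_{|z_k|}(z_k)$. Writing $\phi_{z_k}(u) - \phi_0(u) = (a_+(z_k)-a_+(0))u^+ - (a_-(z_k)-a_-(0))u^-$ and combining the H\"older continuity of $a_\pm$ with the crucial bound $|u(x)| \leq C|x|^d$ from Lemma \ref{lemma:u-d} (which applies because $0\in\cS_d(u)$), I obtain $|\phi_{z_k}(u)-\phi_0(u)|(x) \leq C|z_k|^\alpha|x|^d \leq C|z_k|^{d+\alpha}$ on $B_{2|z_k|}$. Adding the two $C^{d,\alpha}$-error terms from Lemma \ref{lemma:v-Cda-deg}, this gives
\[
\sup_{B_{|z_k|}(z_k)} |P_{z_k}-P_0| \leq C|z_k|^{d+\alpha}.
\]
Since $P_{z_k}-P_0$ is a polynomial of degree at most $d$, a standard Markov-type derivative estimate on $B_{|z_k|}(z_k)$ yields $|D^\beta(P_{z_k}-P_0)(z_k)| \leq C|z_k|^{d+\alpha-|\beta|}$ for every $|\beta|\leq d$.

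Because $P_{z_k}(z_k + y)$ is homogeneous of degree $d$ in $y$, all derivatives $D^\beta P_{z_k}(z_k)$ vanish for $|\beta| \leq d-1$, so the previous estimate reduces to $|D^\beta P_0(z_k)| \leq C|z_k|^{d+\alpha-|\beta|}$. Exploiting the homogeneity of $P_0$ (degree $d$), I rewrite $D^\beta P_0(z_k) = |z_k|^{d-|\beta|} D^\beta P_0(\xi_k)$, obtaining $|D^\beta P_0(\xi_k)| \leq C|z_k|^\alpha \to 0$ for every $|\beta| \leq d-1$. Passing to a subsequence with $\xi_k \to \xi$ on the unit sphere, continuity gives $D^\beta P_0(\xi) = 0$ for all $|\beta|\leq d-1$, i.e., $\xi \in \cS_d(P_0)$ by \eqref{eq:hom poly-1}. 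The main obstacle I anticipate is producing the sharp error estimate above: using only the Lipschitz regularity of $u$ and H\"older continuity of $a_\pm$ naively gives $|\phi_{z_k}(u)-\phi_0(u)| = O(|z_k|^{1+\alpha})$, which is too weak to control the derivatives of $P_0$ through order $d-1$; recovering the extra factor $|x|^{d-1}$ via the degeneracy at $0$ is exactly what makes the Markov estimate strong enough.
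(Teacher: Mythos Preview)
Your argument is correct, but the route differs from the paper's. The paper rescales $v_0=\phi_0(u)$ by setting $v_k(x)=\rho_k^{-d}v_0(\rho_k x)$ with $\rho_k=|z_k|$, so that $|v_k-P_0|\le M_d\rho_k^\alpha|x|^{d+\alpha}$ on $B_2$, and then uses only the uniform growth bound $|u(y)|\le C_d|y-z_k|^d$ from Lemma~\ref{lemma:u-d} (applied at $z_k$) to get $|v_k(x)|\le C_d|x-\xi_k|^d$; a direct triangle inequality then gives $|P_0(x)|\le \bar C_d|x-\xi^0|^d$, which is precisely the criterion \eqref{eq:hom poly-1}. In contrast, you invoke the full $C^{d,\alpha}$ approximation $P_{z_k}$ at $z_k$ (Lemma~\ref{lemma:v-Cda-deg}), compare $P_{z_k}$ to $P_0$ on $B_{|z_k|}(z_k)$, and pass through a Markov-type inverse estimate and the homogeneity of $P_0$ to force $D^\beta P_0(\xi)=0$ for $|\beta|\le d-1$. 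Your approach uses slightly more machinery (the polynomial $P_{z_k}$ rather than just the growth bound at $z_k$) but is arguably more transparent about \emph{which} derivatives vanish, and it is essentially the comparison the paper itself carries out in the proof of the subsequent Lemma~\ref{lemma:main-deg-C1}. The paper's version is a bit more economical here; yours has the virtue of unifying the two lemmas under a single polynomial-comparison scheme.
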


\begin{proof}
Let $z^k\in\cS_d^j(u)$ such that $z^k\ra 0$. Put $\rho_k=|z^k|$ and $\xi^k=\frac{z^k}{\rho_k}\in\p B_1$. The claim will be proved if one shows that any limit of $\{\xi^k\}_{k=1}^\infty$ belongs to $\cS_d(P_0)$.

Take a limit $\xi^0$ of $\{\xi^k\}_{k=1}^\infty$ along a possible subsequence and consider the scaled version of $v_0=\phi_0(u)$,
\begin{equation*}
v_k(x)=\frac{v_0(\rho_k x)}{\rho_k^d}.
\end{equation*}
It follows from \eqref{eq:v-Cda-deg} that 
\begin{equation}\label{eq:vk-deg}
|v_k(x)-P_0(x)|\leq M_d|x|^{d+\alpha}\rho_k^\alpha\quad\text{in }B_2,
\end{equation}  
provided that $\rho_k\leq\frac{r_d}{2}$ for all $k$.
On the other hand, $z^k\in\cS_d(u)$ implies $\xi^k\in\cS_d(v_k)$, since
\begin{equation}\label{eq:xik}
\limsup_{x\ra\xi^k}\frac{|v_k(x)|}{|x-\xi^k|^d}=\limsup_{z\ra z^k}\frac{|v_0(z)|}{|z-z^k|^d}\leq\frac{1}{\lambda}\limsup_{z\ra z^k}\frac{|u(z)|}{|z-z^k|^d}\leq C_d.
\end{equation}
Due to \eqref{eq:u-d}, the constant, $C_d$, on the rightmost side of \eqref{eq:xik}, can be chosen independently of $z^k$.

Now let $x\in B_2$ such that $|x-\xi^0|>0$. Since $\xi^k\ra \xi^0$, we can always find $k$ satisfying $\rho_k\leq |x-\xi^0|^{d/\alpha}$ and $|x-\xi^k|\leq\frac{3}{2}|x-\xi^0|$. Then it follows from \eqref{eq:vk-deg} and \eqref{eq:xik} that
\begin{equation*}
\frac{|P_0(x)|}{|x-\xi^0|^d}\leq\frac{|v_k(x)-P(x)|}{|x-\xi^0|^d}+\frac{|v_k(x)|}{|x-\xi^k|^d}\frac{|x-\xi^k|^d}{|x-\xi^0|^d}\leq 2^{d+\alpha}M_d+\frac{3}{2}C_d\leq \bar{C}_d,
\end{equation*}
where $\bar{C}_d>0$ depends only on $n$, $\lambda$, $\omega_0$, $\alpha$ and $d$. It shows that $\xi^0\in\cS_d(P_0)$, which proves the claim.
\end{proof}

Next we prove that the tangent linear subspace in Lemma \ref{lemma:main-deg-tangent} can be chosen uniformly in a neighborhood of a point in $\cS_d^j(u)$, proving the continuous differentiability of this set.

\begin{lemma}\label{lemma:main-deg-C1} If $0\in \cS_d(u)$ and there is a sequence $\{z^k\}_{k=1}^\infty\subset\cS_d(u)$ converging to $0$, then $P_{z^k}\ra P_0$ in $C_{loc}^d(\R^n)$. 
\end{lemma}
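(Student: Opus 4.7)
The plan is a compactness plus uniqueness argument. Since each $P_{z^k}$ is a harmonic polynomial of degree $d$ with uniform bounds on its top-order derivatives coming from \eqref{eq:v-Cda-deg-Db}, one extracts a convergent subsequence and identifies the limit with $P_0$ via the uniqueness clause of Lemma \ref{lemma:v-Cda-deg}.

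First I would establish uniform boundedness on compacta. By Lemma \ref{lemma:v-Cda-deg}, each $P_{z^k}$ is a harmonic polynomial, the shifted polynomial $Q_k(y) := P_{z^k}(y + z^k)$ is homogeneous of degree $d$, and $\sum_{|\beta|=d} |D^\beta P_{z^k}| \leq N_d$. Since $D^\beta Q_k = D^\beta P_{z^k}$, the top-order coefficients of $Q_k$ are uniformly bounded; as $Q_k$ has no lower-order terms, the $Q_k$ lie in a bounded subset of the finite-dimensional space of homogeneous harmonic polynomials of degree $d$. Because $z^k \to 0$, the coefficients of $P_{z^k} = Q_k(\cdot - z^k)$ are also uniformly bounded on any compact set. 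By finite-dimensional compactness, along a subsequence $P_{z^k} \to P^\ast$ in $C^\infty_{loc}(\R^n)$ for some polynomial $P^\ast$ of degree at most $d$.

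Next I would verify that $P^\ast$ satisfies the three defining properties of the approximating polynomial at $0$ listed in Lemma \ref{lemma:v-Cda-deg}. Harmonicity passes to the limit. For the homogeneity condition, note $Q_k(0)=0$ and $D^\beta Q_k(0)=0$ for $|\beta| < d$, which translates to $D^\beta P_{z^k}(z^k) = 0$ for $|\beta| \leq d-1$; letting $k \to \infty$ and using $z^k \to 0$ yields $D^\beta P^\ast(0) = 0$ for $|\beta| \leq d-1$, so $P^\ast$ is homogeneous of degree $d$ centered at $0$. For the approximation property, I start from
\[
 |\phi_{z^k}(u)(x) - P_{z^k}(x)| \leq M_d |x - z^k|^{d+\alpha} \quad (x \in B_{r_d}(z^k)),
\]
and use that $\phi_{z^k}(u)(x) = a_+(z^k) u^+(x) - a_-(z^k) u^-(x)$ converges locally uniformly to $\phi_0(u)(x)$ by continuity of $a_\pm$. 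Passing to the limit yields $|\phi_0(u)(x) - P^\ast(x)| \leq M_d |x|^{d+\alpha}$ on $B_{r_d}$. By the uniqueness asserted in Lemma \ref{lemma:v-Cda-deg}, $P^\ast = P_0$.

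Since every subsequence of $\{P_{z^k}\}$ admits a further subsequence converging to the same limit $P_0$, the full sequence converges to $P_0$ in $C^\infty_{loc}$, and hence a fortiori in $C^d_{loc}(\R^n)$. The main point requiring care is the homogeneity transfer from the moving centers $z^k$ to the fixed origin $0$; it is handled by the vanishing of the low-order derivatives of $P_{z^k}$ at $z^k$, combined with the uniform control on $|z^k|$ and on the top-order coefficients from \eqref{eq:v-Cda-deg-Db}.
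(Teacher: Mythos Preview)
Your proposal is correct and takes a genuinely different route from the paper's own proof.

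The paper argues via a blowup: it rescales $v_{z^k}$ around $z^k$ at scale $\rho_k=|z^k|$, passes to a PDE limit to produce a harmonic function $w_0$, shows the rescaled polynomials $P_k$ (which equal $P_{z^k}$ by Lemma \ref{lemma:hom poly}) converge to the same $w_0$, and finally identifies $w_0=P_0$ using the triangle inequality $|P_{z^k}(x)-P_0(x)|\leq M_d(|x-z^k|^{d+\alpha}+|x|^{d+\alpha})+\omega_0\rho_k^\alpha|u(x)|$ together with homogeneity. Along the way it invokes Lemma \ref{lemma:rel open-deg} to keep the limit nontrivial and $W^{1,p}$ estimates for compactness of the $v_k$.

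Your argument bypasses the blowup entirely. You exploit that the $P_{z^k}$ live in a fixed finite-dimensional space (degree-$d$ harmonic polynomials after recentering) with uniformly bounded top coefficients from \eqref{eq:v-Cda-deg-Db}, so compactness is immediate and one can extract a limit $P^\ast$ directly. You then verify the three defining properties at the origin (harmonicity, homogeneity via $D^\beta P_{z^k}(z^k)=0$ for $|\beta|\leq d-1$, and the approximation inequality by passing to the limit in $|\phi_{z^k}(u)-P_{z^k}|\leq M_d|x-z^k|^{d+\alpha}$ using $a_\pm(z^k)\to a_\pm(0)$) and conclude $P^\ast=P_0$ by the uniqueness clause of Lemma \ref{lemma:v-Cda-deg}. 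The sub-subsequence trick then gives full convergence.

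What each approach buys: the paper's blowup is heavier but fits the overall narrative of the section and reuses the machinery already set up; your argument is more elementary, avoids the PDE compactness step and Lemma \ref{lemma:rel open-deg}, and makes explicit use of the uniqueness in Lemma \ref{lemma:v-Cda-deg} that the paper does not directly invoke here. Both are valid; yours is shorter.
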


\begin{proof} First let us observe that 
\begin{equation*}
\begin{split}
|v_{z^k}(x) - v_0(x)|&\leq |a_+(z^k)-a_+(0)|u^+(x) + |a_-(z^k)-a_-(0)|u^-(x)\\
&\leq \omega_0\rho_k^\alpha|u(x)|,
\end{split}
\end{equation*}
where $\rho_k=|z^k|$. Hence, it follows from Lemma \ref{lemma:v-Cda-deg} that
\begin{equation}\label{eq:pzk-p0}
\begin{split}
|P_{z^k}(x)-P_0(x)| &\leq |P_{z^k}(x) - v_{z^k}(x)| + |v_{z^k}(x) - v_0(x)| + |v_0(x) - P_0(x)|\\
&\leq M_d(|x-z^k|^{d+\alpha}+|x|^{d+\alpha})+\omega_0\rho_k^\alpha|u(x)|,
\end{split}
\end{equation}
provided that $x\in B_{r_d}\cap B_{r_d}(z^k)$. 

Set  $\xi^k=z^k/\rho_k\in \p B_1$ and let  $\xi^0$  be a limit of $\{\xi^k\}_{k=1}^\infty$ along a  subsequence. Define further 
\begin{equation*}
v_k(x)=\frac{v_{z^k}(z^k+\rho_kx)}{\rho_k^d},
\end{equation*}
where $\rho_k=|z^k|$, and 
\begin{equation*}
P_k(x)=\frac{P_{z^k}(z^k+\rho_kx)}{\rho_k^d}=P_{z^k}(\xi^k+x).
\end{equation*}
By Lemma \ref{lemma:hom poly}, we have  
\begin{equation}\label{eq:pk-pzk}
P_k = P_{z^k}\quad\text{in }\R^n.
\end{equation}

In view of the equation \eqref{eq:phi}, $v_k$ solves 
\begin{equation}\label{eq:vk-eq}
\Delta v_k=\ddiv(\sigma_k(x)\nabla u_k)\quad\text{in }B_{1/\rho_k},
\end{equation}
where
\begin{equation}\label{eq:sigmak-uk}
\norm{\sigma_k\nabla u_k}_{L^p(B_R)}\leq C_d\rho_k^\alpha R^{d-1+\alpha\frac{n}{p}},
\end{equation}
for any $0<\rho_k R<1$, with $C_d>0$ depending only on $n$, $\lambda$, $\omega_0$, $\alpha$ and $k$. On the other hand, Lemma \ref{lemma:rel open-deg} yields some positive constants $\delta_0$ and $r_0$, depending only on $n$, $\lambda$, $\omega_0$, $\alpha$ and $d$, such that for all sufficiently large $k$, there holds
\begin{equation}\label{eq:vk-delta0}
\sup_{B_r}|v_k|\geq\delta_0r^d,
\end{equation}
for any $0<\rho_k r<r_0$. Moreover we may deduce from \eqref{eq:v-Cda-deg} and \eqref{eq:v-Cda-deg-Db} that
\begin{equation}\label{eq:vk-Pk}
|v_k(x)-P_k(x)|\leq M_d\rho_k^\alpha|x|^{d+\alpha}\quad\text{and}\quad |v_k(x)|\leq C_d|x|^d,
\end{equation}
for any $x\in B_{1/\rho_k}$. Applying the interior $W^{1,p}$ estimate on \eqref{eq:vk-eq} and using \eqref{eq:sigmak-uk} together with \eqref{eq:v-Cda-deg} and \eqref{eq:v-Cda-deg-Db}, we obtain a locally uniform bound on $L^p$ norm of $\nabla v_k$. By compactness, there is a subsequence of $\{v_k\}_{k=1}^\infty$ which converges to a function $w_0$ uniformly in $C_{loc}^{1-p/n}(\R^n)$ and weakly in $W_{loc}^{1,p}(B_1)$. In particular, the fact that $v_k$ solves \eqref{eq:vk-eq} in weak sense implies that
\begin{equation*}
\Delta w_0 = 0 \quad\text{in }\R^n, 
\end{equation*}
while \eqref{eq:vk-delta0} and \eqref{eq:vk-Pk} ensure that
\begin{equation*}
\delta_0r^d\leq \sup_{B_r}|w_0|\leq C_dr^d\quad (r>0).
\end{equation*}
As $w_0$ being a harmonic function satisfying $d$-th order growth condition, $w$ must be a homogeneous harmonic polynomial of degree $d$. Moreover, the first inequality in \eqref{eq:vk-Pk} implies that $\{P_k\}_{k=1}^\infty$ also converges to $w_0$ locally uniformly in $\R^n$. Since $w_0$ and $P_k$ ($k=1,2,\cdots$) are all homogeneous polynomial of degree $d$, the uniform convergence of $\{P_k\}_{k=1}^\infty$ to $w_0$ on $\p B_1$ implies that the leading coefficients of $P_k$ converges to those of $w_0$. However, since the leading coefficients are their $d$-th order derivatives up to constant factors, we obtain that $P_k\ra w_0$ in $C_{loc}^d(\R^n)$ as $k\ra\infty$. 

In view of \eqref{eq:pk-pzk}, we are only left with proving that $w_0=P_0$. Passing limit $k\ra\infty$ in \eqref{eq:pzk-p0}, we obtain 
\begin{equation*}
|w_0(x) - P_0(x)| \leq 2M_d|x|^{d+\alpha}\quad\text{in }B_{r_d}.
\end{equation*}
If $w_0\neq P_0$, then by the homogeneity of these two functions, there must exist $x^0\in\p B_1$ such that $|w_0(x^0) - P_0(x^0)|>\e$ for some $\e>0$. However, taking $r>0$ small enough such that $4M_dr^\alpha\leq\e$ and $r\leq r_d$, we arrive at
\begin{equation*}
\e < |w_0(x^0) - P_0(x^0)| = r^{-d}|w_0(rx^0) - P_0(rx^0)| \leq 2M_dr^\alpha \leq \frac{\e}{2},
\end{equation*}
a contradiction. This observation implies  $w_0\equiv P_0$, finishing the proof of Lemma \ref{lemma:main-deg-C1}.
\end{proof}

We are ready to prove the structure theorem of the nodal set for broken PDEs. 

\begin{proof}[Proof of Theorem \ref{theorem:main-deg} (i)]

Lemma \ref{lemma:main-deg-tangent} implies that for each $z\in\cS_d^j(u)$ and $\e>0$, there exists $r_{z,\e}>0$ such that
\begin{equation*}
B_{r_{z,\e}}(z)\cap \cS_d^j(u)\subset \{x\in\R^n: \dist(x,\cS_d(P_z))<\e|x-z|\}.
\end{equation*}
By Lemma \ref{lemma:main-deg-C1}, the choice of $r_{z,\e}$ can be made uniform in a small neighborhood of $z$ such that for any $y\in B_{r_{z,\e}}(z)\cap \cS_d^j(u)$,
\begin{equation*}
B_{r_{z,\e}}(y)\cap \cS_d^j(u) \subset \{x\in\R^n:\dist(x,\cS_d(P_y))<\e|x-y|\}.
\end{equation*}
This amounts to the fact that for each $z\in\cS_d^j(u)$, there exists $r_z>0$ such that $B_{r_z}\cap \cS_d^j(u)$ is a $j$-dimensional $C^1$ graph. The rest of the proof follows from a simple covering argument, which we leave out to the reader.
\end{proof}


\subsection{Hausdorff Measure of Nodal Sets}\label{subsection:hausdorff}

We are now left with proving the second assertion of Theorem \ref{theorem:main-deg}. Let us first observe a general fact associated with the Lebesgue measure of positive and negative set of $u$, when we have a control on its frequency.\footnote{That is, a control on $\dfrac{\int_{B_1}|\nabla u|^2}{\int_{\p B_1} u^2}$.}

\begin{lemma}\label{lemma:leb} Let $u$ be a nontrivial weak solution to \eqref{eq:main} in $B_1$. Then there exist a positive constant $\mu$, which depends only on $n$, $\lambda$ and $N$, such that
\begin{equation}\label{eq:leb}
\min\{\cL^n(\{u>0\}\cap B_1) , \cL^n(\{u<0\}\cap B_1)\} \geq \mu,
\end{equation}
where 
\begin{equation*}
N := \frac{\int_{B_1}|\nabla u|^2}{\int_{\p B_1} u^2}.
\end{equation*}
\end{lemma}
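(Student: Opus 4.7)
My plan is to argue by contradiction via a compactness argument. Suppose no such $\mu=\mu(n,\lambda,N)>0$ exists. Then there is a sequence $(u_k)$ of nontrivial weak solutions of \eqref{eq:main} (with $\vv{f}\equiv 0$ and coefficients $a^k_\pm$ satisfying \eqref{eq:ellip} with the common constant $\lambda$), each having frequency at most $N$, yet with $\epsilon_k:=\cL^n(\{u_k<0\}\cap B_1)\to 0$ (interchanging $\pm$ if necessary). Normalizing $\|u_k\|_{L^2(\partial B_1)}=1$, the frequency bound gives $\int_{B_1}|\nabla u_k|^2\le N$, and the trace--Poincar\'e inequality furnishes a uniform $W^{1,2}(B_1)$-bound.

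Passing to a subsequence, one has $u_k \rightharpoonup u_\infty$ weakly in $W^{1,2}(B_1)$, strongly in $L^2(B_1)\cap L^2(\partial B_1)$, and pointwise a.e., while $a^k_\pm\stackrel{\ast}{\rightharpoonup} a^\infty_\pm$ weak-$\ast$ in $L^\infty(B_1)$. Fatou's lemma together with $\epsilon_k\to 0$ forces $u_\infty\ge 0$ a.e.\ in $B_1$, whereas strong trace convergence gives $\|u_\infty\|_{L^2(\partial B_1)}=1$, so $u_\infty\not\equiv 0$. On the open set $\{u_\infty>0\}$ one has $H(u_k)\to 1$ a.e., and passing to the limit in the weak formulation identifies $u_\infty$ as a nonnegative weak solution of a linear uniformly elliptic divergence-form equation on $B_1$. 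De Giorgi--Nash--Moser gives H\"older continuity of $u_\infty$, and the strong maximum principle forces $u_\infty > 0$ strictly throughout $B_1$.

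Since each $u_k$ takes both signs (as $|\{u_k<0\}|>0$), choose a nodal point $z_k\in\Gamma(u_k)\cap B_1$. The uniform Lipschitz estimate of Theorem \ref{theorem:main}~(i) combined with Arzel\`a--Ascoli upgrades the $L^2$-convergence to uniform convergence on compact subsets of $B_1$. If a subsequence of $(z_k)$ accumulates at some $z_\infty\in\overline{B_{1-\delta}}$ for a fixed $\delta>0$, then $u_\infty(z_\infty)=\lim_k u_k(z_k)=0$, contradicting $u_\infty>0$ in $B_1$.

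The main obstacle, where the bulk of the work lies, is to rule out the boundary-escape scenario $z_k\to\partial B_1$. To handle this, I would first establish the weak minimum principle for \eqref{eq:main}, by testing with $(u_k-m)^-$ and absorbing the gradient term via \eqref{eq:ellip}, so that $\inf_{B_1}u_k\ge \inf_{\partial B_1}u_k$. A Sobolev--trace estimate on the small-support function $u_k^-$ then yields
$$
\|u_k^-\|_{L^2(\partial B_1)}^{\,2}\lesssim \epsilon_k^{1/n}\,\|\nabla u_k\|_{L^2(B_1)}^{\,2}\lesssim \epsilon_k^{1/n}N,
$$
so the negative boundary mass vanishes. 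Coupled with the uniform interior convergence $u_k\to u_\infty>0$ on each $\overline{B_{1-\delta}}$ and the minimum principle, this forces $u_k>0$ on $\overline{B_{1-\delta}}$ for large $k$, pushing the nodal points into vanishing boundary layers; a boundary-barrier or Hopf-type argument should then contradict the preserved nontriviality, finishing the proof. This last boundary step is where the argument is most delicate, since the coefficients are merely bounded measurable and the PDE is nonlinear in $u$ through $A(x,u)$.
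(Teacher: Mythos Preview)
Your setup coincides with the paper's: argue by contradiction, normalize $\|u_k\|_{L^2(\partial B_1)}=1$, use the frequency bound to get a uniform $W^{1,2}(B_1)$ estimate, and extract a limit $u_\infty$ with $\|u_\infty\|_{L^2(\partial B_1)}=1$. After that, however, the paper takes a much shorter and cleaner path than you do. The paper never tries to identify a limiting PDE, never invokes a maximum principle, and never chases nodal points. It uses only De~Giorgi's interior H\"older estimate (valid for bounded measurable coefficients, uniformly in $k$) to upgrade to locally uniform convergence, and then argues directly on measures: for any $\epsilon>0$ and $D\Subset B_1$ one has $\{u_0>2\epsilon\}\cap D\subset\{u_{j_k}>\epsilon\}\cap D$ for large $k$, so $|\{u_0>0\}\cap B_1|=0$; the analogous argument on the negative side gives $|\{u_0<0\}\cap B_1|=0$, and continuity then forces $u_0\equiv 0$ in $B_1$, contradicting $\int_{\partial B_1}u_0^2=1$. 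In particular there is no boundary-escape issue at all, because the contradiction comes from the trace and not from any interior point.

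Your route, by contrast, has two genuine gaps. First, the step ``passing to the limit in the weak formulation identifies $u_\infty$ as a weak solution of a linear uniformly elliptic equation'' is not justified: you only have $a_\pm^k\stackrel{*}{\rightharpoonup}a_\pm^\infty$ in $L^\infty$ together with $\nabla u_k\rightharpoonup\nabla u_\infty$ in $L^2$, and the product of a weak-$*$ and a weak limit is \emph{not} the limit of the products. This is precisely the homogenization obstruction; without knowing that $u_\infty$ solves some specific elliptic equation you cannot invoke the strong maximum principle to get $u_\infty>0$. Second, the boundary-escape scenario that you yourself flag as ``the main obstacle'' is not resolved; the Hopf/barrier sketch you propose requires boundary regularity of both the domain and the coefficients that is simply unavailable here (the coefficients are only bounded measurable, and the solutions are only in $W^{1,2}$ up to $\partial B_1$). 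Finally, your appeal to Theorem~\ref{theorem:main}~(i) imports the $C^\alpha$ assumptions on $a_\pm$ and $\vv f$, whereas the paper explicitly remarks after the lemma that \emph{no} regularity of the coefficients is used in its proof; De~Giorgi's estimate already gives all the compactness needed.
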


\begin{proof} Suppose towards a contradiction that the statement is false. By normalization, we may without loss of generality choose a sequence $\{u_j\}_{j=1}^\infty$ of nontrivial weak solutions to \eqref{eq:main} in $B_1$ such that 
\begin{equation}\label{eq:uj-1}
\int_{B_1}|\nabla u_j|^2 = N\quad\text{and}\quad \int_{\p B_1} u_j^2=1,
\end{equation}
whereas
\begin{equation}\label{eq:uj-2}
\max\{\cL^n(\{u_j>0\}\cap B_1),\cL^n(\{u_j<0\}\cap B_1)\}\leq \frac{1}{j}.
\end{equation}

It follows from \eqref{eq:uj-1} that 
\begin{equation*}
\int_{B_1}u_j^2 \leq C_N,
\end{equation*}
where $C_N$ is a positive constant depending only on $n$ and $N$. Hence, the De Giorgi theory yields a locally uniform $\gamma$-H\"{o}lder estimate for $\{u_j\}_{j=1}^\infty$, where $0<\gamma<1$ depends only on $n$ and $\lambda$. Therefore we are able to extract a subsequence $\{u_{j_k}\}_{k=1}^\infty$ of $\{u_j\}_{j=1}^\infty$ and a function $u_0\in C_{loc}^\gamma(B_1)\cap W^{1,2}(B_1)$ such that $u_{j_k}\ra u_0$ in $L^2(\p B_1)$, weakly in $W^{1,2}(B_1)$ and locally uniformly in $B_1$.

From the weak convergence of $\{u_{j_k}\}_{k=1}^\infty$ in $W^{1,2}(B_1)$ we deduce that $u_0$ also solves \eqref{eq:main} in $B_1$. On the other hand, the locally uniform convergence implies that given $\e>0$, $\eta>0$ and $D\Subset B_1$, there exists $k_0= k_0(\e,D)$ such that $\{u_0 > 2\e\} \cap D \subset \{u_{j_k}> \e\}\cap D$ for all $k\geq k_0$. Since $\{u_{j_k}>\e\}\cap D\subset \{u_{j_k}>0\}\cap B_1$, 
\begin{equation*}
\cL^n(\{u_0>2\e\}\cap D)\leq \cL^n(\{u_{j_k}>0\}\cap B_1) \leq \frac{1}{j_k}\quad (k\geq k_0).
\end{equation*}
Letting $k\ra \infty$ and then $\e\ra 0$, we obtain from a covering argument that 
\begin{equation*}
\cL^n(\{u_0>0\}\cap B_1) = 0.
\end{equation*}
Similarly, one may show that 
\begin{equation*}
\cL^n(\{u_0<0\}\cap B_1) = 0.
\end{equation*}
By means of the continuity of $u_0$, we deduce that $u_0 = 0$ in $B_1$. In particular, $u_0 = 0$ on $\p B_1$ in the trace sense. However, from \eqref{eq:uj-1} and the strong convergence of $\{u_{j_k}\}_{k=1}^\infty$ in $L^2(\p B_1)$, we arrive at
\begin{equation*}
\int_{\p B_1}u_0^2 = 1,
\end{equation*}
a contradiction.
\end{proof}

An easy corollary to the above lemma is that we may apply the Poincar\'{e} inequality directly to $u$, without subtracting its average, whenever we have a control on the frequency of $u$. This fact will be used in the proof of Theorem \ref{theorem:main-deg} (ii). As an independent remark, we would like to point out that the proof of Lemma \ref{lemma:leb} does not involve any regularity on the coefficients of \eqref{eq:main}. 

Let us define a function $w$ by 
\begin{equation*}
w(x) = a_+(x) u^+(x) - a_-(x) u^-(x).
\end{equation*}
One should notice that the zero level surface of $w$ coincides with that of $u$. It is also noteworthy that under the assumption that $a_+,a_- \in W^{1,2}(B_1)$, $w$ becomes a weak solution to 
\begin{equation}\label{eq:w}
\Delta w = \ddiv(u^+(x)\nabla a_+ - u^-(x)\nabla a_-)\quad\text{in }B_1.
\end{equation}

\begin{proof}[Proof of Theorem \ref{theorem:main-deg} (ii)] For notational convenience, let us write 
\begin{equation*}
M = \max\left\{\norm{a_+}_{C^{1,1}(B_1)},\norm{a_-}_{C^{1,1}(B_1)}\right\}\quad\text{and}\quad N = \frac{\int_{B_1}|\nabla u|^2}{\int_{\p B_1} u^2}.
\end{equation*}
By the definition of $w$, we have $u^\pm = w^\pm/a_\pm$, from which we may rephrase \eqref{eq:w} as
\begin{equation}\label{eq:w-1}
\Delta w = \vv{b}(x)\nabla w + c(x)w\quad \text{in }B_1,
\end{equation}
where
\begin{align*}
\vv{b}(x) &= \frac{1}{a_+(x)}\nabla a_+(x)H(u(x))  + \frac{1}{a_-(x)}\nabla a_-(x)(1-H(u(x))),\\
c(x) &=  \ddiv\left(\frac{1}{a_+(x)}\nabla a_+(x)\right)H(u(x)) + \ddiv\left(\frac{1}{a_-(x)}\nabla a_-(x)\right)(1-H(u(x))).
\end{align*}
Note that the $C^{1,1}$ regularity of $a_+$ and $a_-$ assures the boundedness of $\vv{b}$ and $c$ in $B_1$ with 
\begin{equation*}
\norm{\vv{b}}_{L^\infty(B_1)}+\norm{c}_{L^\infty(B_1)}\leq C_1M,
\end{equation*}
where $C_1>0$ depends only on $n$ and $\lambda$. 

In addition, observe that the frequency of $w$ is comparable to that of $u$. By the definition of $w$, we have 
\begin{equation*}
\int_{\p B_1}w^2 \geq \lambda^2 \int_{\p B_1} u^2.
\end{equation*}
On the other hand, from Lemma \ref{lemma:leb} we may apply the Poincar\'{e} inequality to $u^+$ and respectively $u^-$ to obtain that 
\begin{equation*}
\int_{B_1}u^2\leq \kappa\int_{B_1} |\nabla u|^2,
\end{equation*}
where $\kappa$ is a positive constant depending only on $n$, $\lambda$ and $N$. Combining this inequality with the fact that  
\begin{equation*}
\nabla w^\pm(x) = \frac{1}{a_\pm(x)}\nabla u^\pm(x) - \frac{1}{(a_\pm(x))^2}u^\pm(x)\nabla a_\pm(x),
\end{equation*}
we arrive at
\begin{equation*}
\int_{B_1} |\nabla w|^2 \leq 2M\int_{B_1}(|\nabla u|^2 + u^2)\leq (1+\kappa)M\int_{B_1}|\nabla u|^2.
\end{equation*}
Thus, the frequency of $w$ can be estimated as follows:
\begin{equation}\label{eq:freq-w}
\dfrac{\int_{B_1}|\nabla w|^2}{\int_{\p B_1} w^2}\leq \frac{(1+\kappa)M}{\lambda^2}N.
\end{equation}

As $w$ being a solution to \eqref{eq:w}, whose coefficients of the lower order terms are bounded, the frequency control in \eqref{eq:freq-w} implies that
\begin{equation*}
\cH^{n-1}(\Gamma(w)\cap B_{1/2}) \leq C_2 \frac{(1+\kappa)M}{\lambda^2}N,
\end{equation*}
where $C_2>0$ is a constant depending only on $n$, $\lambda$ and $M$; for proofs, readers may refer to \cite{HL}. Owing to the fact that $\Gamma(w) = \Gamma(u)$, the proof is finished.
\end{proof}

%
%

\section{Broken PDEs with Order $s>0$}\label{section:higher}


\subsection{Problem Setting and Main Results}\label{subsection:setting-higher}
Set $s>0$ and consider 
\begin{equation}\tag{$H$}\label{eq:higher}
\ddiv(A_s(x,u)\nabla u)= \ddiv\vv{f}(x)\quad\text{in }B_1,
\end{equation}
where
\begin{equation}\label{eq:A_s}
A_s(x,u)= a(x) + b(x)(u^+)^s,
\end{equation}
where $a$ and $b$ satisfy 
\begin{equation}\label{eq:ellip-h}
\lambda\leq a(x), b(x) \leq\frac{1}{\lambda}\quad (x\in\bar{B}_1),
\end{equation}
with a constant $0<\lambda<1$.

Before we state our main results on the case $s>0$, let us make a supplementary remark regarding the case $s<0$.

\begin{remark}\label{remark:s<0}
In the case $s<0$, the authors found it challenging even to prove the existence of solutions, provided that the coefficients $a$ and $b$ are not constants. The regularity of solutions also takes a lot of attention. To simplify the matter, let $a$ and $b$ be positive constants. Then $\phi(u;s)$, defined by 
\begin{equation*}
\phi(u;s) = \begin{dcases}
au + \frac{b(u^+)^{s+1}}{s+1},&\text{if }s<0\text{ and }s\neq -1,\\
au + b\log u^+,&\text{if }s = -1,
\end{dcases}
\end{equation*}
becomes harmonic in $B_1$. When $-1<s<0$, one may deduce from the inverse function theorem that $u\in C^{1+ s}(B_1)$.  However, when $s\leq -1$, the positive part blows up as $u$ approaches $0+$, and a set of singularities for  $\phi(u;s)$ along $\partial \{u>0 \}$
arise.
\end{remark}

Our first result concerns with the optimal regularity of solutions to \eqref{eq:higher}. In what follows, we continue to use the terminologies introduced in Definition \ref{definition:fb}.

\begin{theorem}\label{theorem:reg-h} Let $u$ be a nontrivial weak solution to \eqref{eq:higher} with $|u|\leq 1$ in $\bar{B}_1$. Also set $m$ to be the greatest integer less than or equal to $s$.
\begin{enumerate}[(i)]
\item Let $0<s-m<1$ and suppose that $a,b,\vv{f}\in C^{m,s-m}(B_1)$. Then $u\in C^{m+1,s-m}(\bar{B}_{1/2})$ and 
\begin{equation*}
\norm{u}_{C^{m+1,s-m}(B_{1/2})}\leq C_s,
\end{equation*}
where $C_s$ is a positive constant depending only on $n$, $\lambda$, $s$ and the $C^{m,s-m}$ norms of $a$, $b$ and $\vv{f}$ on $\bar{B}_1$.
\item Let $s=m$ and suppose that $a,\vv{f}\in C^{m,\alpha}(\bar{B}_1)$, for some $0<\alpha<1$, and $b\in C^{m-1,1}(\bar{B}_1)$. Then $u\in C^{m,1}(B_{1/2}\cap\{u\leq 0\})$ and 
\begin{equation*}
\norm{u}_{C^{m,1}(B_{1/2}\cap\{u\leq 0\})}\leq C_{m,\alpha}.
\end{equation*}
Moreover, we have 
\begin{equation*}
|D^m u (x) - D^m u(z)|\leq C_{m,\alpha}|x-z|\quad\text{in }B_{r_{m,\alpha}}(z),
\end{equation*}
whenever $z\in\Gamma(u)\cap B_{1/2}$. Here $C_{m,\alpha}$ and $r_{m,\alpha}$ are positive constants depending only on $n$, $\lambda$, $m$, $\alpha$, the $C^{m,\alpha}$ norms of $a$ and $\vv{f}$ on $\bar{B}_1$, and the $C^{m-1,1}$ norm of $b$ on $\bar{B}_1$.
\end{enumerate}
\end{theorem}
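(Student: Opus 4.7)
The approach is to mimic the pointwise Schauder strategy of Section~\ref{section:zero}, now accommodating the higher-order nonlinearity of $A_s$. At each $z \in \Gamma(u) \cap B_{1/2}$, the plan is to introduce the freezing transformation
\begin{equation*}
\phi_z(u)(x) = a(z)\,u(x) + \frac{b(z)}{s+1}\bigl(u^+(x)\bigr)^{s+1},
\end{equation*}
chosen so that $\nabla \phi_z(u) = A_s(z, u(x))\,\nabla u(x)$. A direct calculation then yields
\begin{equation*}
\Delta\phi_z(u) = \ddiv\bigl(\vv{f}(x) + \sigma_z(x)\,\nabla u(x)\bigr)\quad\text{in }B_1,
\end{equation*}
with $\sigma_z(x) = (a(z) - a(x)) + (b(z) - b(x))(u^+(x))^s$. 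Two structural features make this useful: $\sigma_z$ vanishes at $z$ with a H\"older rate that can be sharpened by Taylor-expanding $a$ and $b$ to higher order once more regularity of $u$ is known, and the scalar map $\Phi_z(t) = a(z)\,t + \frac{b(z)}{s+1}(t^+)^{s+1}$ has derivative $\geq\lambda$, so it is globally bi-Lipschitz with inverse whose pointwise regularity at $0$ is $C^{m+1, s-m}$ in case~(i) and only $C^{m,1}$ in case~(ii).

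Building on this, the plan is a bootstrap. First I would prove $u \in C^{0,1}(B_{1/2})$: since $u$ is De~Giorgi--H\"older continuous, the coefficient $x \mapsto A_s(x, u(x))$ is H\"older on $B_1$, and the chain of arguments in Lemmas~\ref{lemma:Ca}--\ref{lemma:L2-BMO} (H\"older growth via contradiction-compactness, local $W^{1,p}$ estimates, $L^2$-BMO estimate for $\nabla u$) transplants here to give a uniform pointwise $C^{1,\alpha}$ estimate on $\phi_z(u)$ at $z$, whence a Lipschitz bound on $u$. Once $u \in C^{k,\beta}$ is known, the H\"older behavior of $\sigma_z\,\nabla u$ improves accordingly, and applying the appropriate higher-degree pointwise Schauder estimate from the appendix (a higher-order analogue of Lemma~\ref{lemma:C1a}) upgrades $\phi_z(u)$ to $C^{k+1,\beta'}$ at $z$ with constants uniform in $z$; inverting $\Phi_z$ transfers this bound back to $u$. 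In parallel, on the open sets $\{u>0\}$ and $\{u<0\}$ classical interior Schauder applied to the divergence-form equation with smooth-in-$x$ coefficients gives higher interior regularity. After finitely many iterations the regularity saturates at that of the data, proving part~(i).

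For part~(ii) the same bootstrap is run but the terminal transfer step caps at $C^{m,1}$ at free boundary points, because $\Phi_z^{-1}$ is only $C^{m,1}$ at $0$ (the $(m+1)$-th derivative of $t \mapsto (t^+)^{m+1}$ being the discontinuous Heaviside). On the set $\{u \leq 0\}$ the equation reduces to the linear $\ddiv(a\,\nabla u) = \ddiv \vv{f}$ with $a,\vv{f}\in C^{m,\alpha}$; classical interior Schauder gives $u \in C^{m+1,\alpha}_{\mathrm{loc}}(\{u < 0\})$, and combining this with the uniform pointwise $C^{m,1}$ estimate at each free boundary point yields the one-sided $C^{m,1}$ regularity up to $\{u = 0\}$ asserted in the theorem. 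The main technical obstacle I anticipate is keeping the bootstrap constants uniform in $z \in \Gamma(u)\cap B_{1/2}$: each iteration requires that the H\"older norm of $\sigma_z\,\nabla u$ be bounded independently of $z$, which demands careful tracking of how the Taylor remainder for $a$ and $b$ around $z$ interacts with the inductive hypothesis on $u$. A secondary delicacy in part~(ii) is matching the $m$-th derivatives from the two sides of the free boundary to extract the Lipschitz estimate $|D^m u(x) - D^m u(z)| \leq C|x-z|$ across $\{u=0\}$, which relies on the fact that $\Phi_z$ is globally $C^{m,1}$ as a function of $u$, not merely near the origin.
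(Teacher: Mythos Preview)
Your approach is viable but takes a substantially longer route than the paper in both parts. For part~(i) the paper never introduces a freezing transformation: since $s>0$ the composite coefficient $x\mapsto A_s(x,u(x))=a(x)+b(x)(u^+(x))^s$ is itself H\"older as soon as $u$ is (De~Giorgi), so standard interior Schauder applied directly to the original equation bootstraps in two or three steps---for instance when $0<s<1$, $u\in C^\gamma\Rightarrow \tilde A_s\in C^{\gamma s}\Rightarrow u\in C^{1,\gamma s}\Rightarrow u^+\in C^{0,1}\Rightarrow\tilde A_s\in C^s\Rightarrow u\in C^{1,s}$---and similarly up to $C^{m+1,s-m}$ in the higher cases. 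The machinery you import from Section~\ref{section:zero} was forced there precisely by the discontinuity of the $s=0$ coefficient; for $s>0$ it is unnecessary. For part~(ii) the paper first reaches $u\in C^{m,\alpha}$ by the same coefficient bootstrap, then \emph{differentiates the equation}: for each multi-index $|\mu|=m$, the function $w=D^\mu u$ solves $\ddiv(\tilde A\,\nabla w)=\ddiv\vv{F}$ with $\tilde A=a+b(u^+)^m$ Lipschitz and $\vv{F}$ built from lower-order derivatives. The single non-$C^\alpha$ piece of $\vv{F}$ is $(u^+)^m\,D^\mu b\,\nabla u$, and since $u(z)=0$ this term vanishes to order $m$ at any $z\in\Gamma(u)$; hence $|\vv{F}(x)-\vv{F}(z)|\leq C|x-z|^\alpha$ near $z$, and ordinary pointwise Schauder yields $|w(x)-w(z)|\leq C|x-z|$ directly. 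This sidesteps the inversion of $\Phi_z$ and the attendant composition-of-pointwise-H\"older bookkeeping your plan requires (tracking Taylor remainders of $\sigma_z\nabla u$ jointly in $a$, $b$, and $u$ through each iteration). What your route buys is a treatment fully parallel to Section~\ref{section:zero}; what the paper's buys is brevity and the complete avoidance of any nonlinear inversion step.
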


Note that the standard regularity theory yields $u\in C^{m,\gamma}(\bar{B}_{1/2})$ (for any $0<\gamma<1$) in case (ii), since the coefficient on the nonnegative part, $\{u\geq 0\}$, is only $C^{m-1,1}$ regular. On the other hand, we know that $u$ is locally $C^{m+1,\alpha}$ on its negative part, $\{u<0\}$. What Theorem \ref{theorem:reg-h} asserts is, roughly, that $u$ behaves nicely up to the level surface where the leading coefficients experience a ``break''. 

It is worthwhile to mention that we obtain the regularity of the nondegenerate part of the zero level surface for free (via the inverse function theorem) because of the higher regularity of the derivatives of $u$. 

\begin{corollary}\label{corollary:fb-h} Let $u$ be as in Theorem \ref{theorem:reg-h}. Then $\cN(u)$ is a locally $C^{m,s}$ [resp., $C^{m,1}$] graph, whose locality depends only on $n$, $\lambda$, $\omega_0$ and the nondegeneracy constant of $u$ at the base point, provided that $0<s-m<1$ [resp., $s=m$ and $a$ is assumed as in Theorem \ref{theorem:reg-h} (ii)].
\end{corollary}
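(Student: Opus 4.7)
The plan is to combine the higher regularity of $u$ furnished by Theorem~\ref{theorem:reg-h} with a straightforward application of the implicit function theorem, carried out uniformly at each nondegenerate base point.

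First I would fix $z \in \cN(u)$ and set $\delta = \limsup_{x \to z} |u(x)|/|x-z|$, which by definition is strictly positive. Since Theorem~\ref{theorem:reg-h} gives $u \in C^1$ in a fixed neighborhood of $z$ with uniformly bounded $C^{1,\alpha}$ (or better) norm, a Taylor expansion at $z$ forces $|\nabla u(z)| = \delta$. Setting $\nu = \nabla u(z)/\delta$ and using the uniform continuity of $\nabla u$ provided by Theorem~\ref{theorem:reg-h}, I can produce a radius $r > 0$, depending only on $n$, $\lambda$, $\omega_0$, $\delta$ and the data, such that $\p_\nu u \geq \delta/2$ throughout $B_r(z)$. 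Rotating coordinates so that $\nu = e_n$, the classical implicit function theorem exhibits $\Gamma(u) \cap B_r(z)$ as the graph $x_n = g(x')$ of a function $g$ defined on a ball in $\R^{n-1}$, and the regularity of $g$ is inherited from that of $u$ via the usual formula $\p_i g = -\p_i u/\p_n u$ and iteration.

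In case (i), Theorem~\ref{theorem:reg-h}~(i) gives $u \in C^{m+1,s-m}$ uniformly near $z$, so the quotient formula yields $g$ in the corresponding Hölder class, which is the advertised $C^{m,s}$ regularity. In case (ii), I would combine the two pieces of information furnished by Theorem~\ref{theorem:reg-h}~(ii), namely $u \in C^{m,1}(\overline{B_{1/2} \cap \{u \leq 0\}})$ together with the pointwise bound $|D^m u(x) - D^m u(z)| \leq C|x-z|$ valid in a uniform neighborhood of each $z \in \Gamma(u) \cap B_{1/2}$, into a single two-sided $C^{m,1}$ estimate for $u$ on $B_r(z)$; the implicit function theorem then delivers $g \in C^{m,1}$.

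The step I expect to be the main obstacle is precisely this case (ii) gluing. The a priori regularity of $u$ is asymmetric across $\Gamma(u)$: globally $C^{m+1,\alpha}$ on $\{u<0\}$, globally $C^{m,1}$ on $\{u \leq 0\}$, and only pointwise $C^{m,1}$ at each free boundary point on the positive side. What needs to be checked is that these estimates combine to yield a genuine $C^{m,1}$ modulus for $u$ on a full two-sided neighborhood of $z$, with the Lipschitz constant for $D^m u$ controlled uniformly in terms of the nondegeneracy constant. Once this is in place — essentially by matching the one-sided $C^{m,1}$ norm on $\{u \leq 0\}$ with the pointwise bound at $z$ and invoking the continuity of $D^m u$ — the uniformity of $r$ in $\delta$ and the desired graph regularity both follow at once, and a standard covering argument completes the proof.
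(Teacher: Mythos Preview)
Your overall plan matches the paper's, which simply says the corollary follows ``via the inverse function theorem'' and gives no further proof. For case~(i) your argument is complete.

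For case~(ii), the obstacle you flag is real, but your proposed resolution does not work. You aim to glue the one-sided $C^{m,1}$ estimate on $\{u\le 0\}$ with the pointwise bound $|D^mu(x)-D^mu(z)|\le C|x-z|$ into a \emph{two-sided} $C^{m,1}$ estimate on $B_r(z)$. This fails: strictly inside $\{u>0\}$ the coefficient $a+bu^m$ is only $C^{m-1,1}$ (because $b\in C^{m-1,1}$), so Schauder theory yields $u\in C^{m,\gamma}$ for every $\gamma<1$ but not $C^{m,1}$. Concretely, for two points $x,y\in\{u>0\}$ with $|x-y|\ll\dist(x,\Gamma(u))$, neither the pointwise estimate at free boundary points nor the one-sided estimate controls $|D^mu(x)-D^mu(y)|$ linearly, and the interior estimate is too weak.

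The fix is that you never needed the two-sided estimate. Once you know (from the $C^1$ step) that $\Gamma(u)$ near $z$ is a $C^1$ graph $x_n=g(x')$ with $\partial_n u\ne 0$, the formula $\partial_i g=-\partial_i u/\partial_n u$ evaluates $\nabla u$ only at points of $\Gamma(u)\subset\overline{\{u\le 0\}}$. Since Theorem~\ref{theorem:reg-h}~(ii) gives $u\in C^{m,1}(\overline{B_{1/2}\cap\{u\le 0\}})$ with a uniform norm, iterating the chain rule on the negative side alone yields $g\in C^{m,1}$ with the desired dependence. Equivalently, apply the implicit function theorem to $u$ viewed as a $C^{m,1}$ function on the closed set $\overline{\{u\le 0\}}$; no information from the positive side is required.
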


Next we state the result for the Hausdorff measure of zero level surfaces. 

\begin{theorem}\label{theorem:hauss-h} Suppose that $f\equiv 0$ and let $u$ be a nontrivial solution to \eqref{eq:higher} with $|u|\leq 1$ in $\bar{B}_1$. Then 
\begin{equation*}
\cH^{n-1}(\Gamma(u)\cap B_{1/2})\leq CN,
\end{equation*}
where
\begin{equation*}
N = \frac{\frac{3}{4}\int_{B_{3/4}}|\nabla u|^2}{\int_{\p B_{3/4}}u^2},
\end{equation*}
provided that either (i) $0<s<1$, $a,b\in C^{1,1}(\bar{B}_1)$, or (ii) $s\geq 1$ and $a,b\in C^{0,1}(\bar{B}_1)$. Here $C$ is a positive constant depending only on $n$, $\lambda$ and the $C^{1,1}$ norms [resp., $C^{0,1}$ norms] of $a$ and $b$ in case (i) [resp., case (ii)].
\end{theorem}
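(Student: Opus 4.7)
The plan is to reduce both regimes (i) and (ii) to the classical Hardt--Simon/Han--Lin nodal-set estimate, namely $\cH^{n-1}(\{w=0\}\cap B_{1/2})\leq C\,N(w)$ for weak solutions of an elliptic PDE with Lipschitz leading coefficient (possibly carrying bounded lower-order terms), where $N(w)$ denotes the Almgren-type frequency at radius $3/4$.

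Case (ii) is the shorter one and requires no transformation. Standard divergence-form theory (De Giorgi--Nash--Moser followed by a Schauder bootstrap) yields $u\in C^{1,\alpha}_{\mathrm{loc}}(B_1)$, hence Lipschitz on $\overline{B_{3/4}}$. Since $s\geq 1$, the one-variable map $t\mapsto (t^+)^s$ is itself Lipschitz, so the composite coefficient $\tilde A(x):=a(x)+b(x)(u^+(x))^s$ is Lipschitz on $\overline{B_{3/4}}$. Applying the classical estimate directly to $u$, which solves the divergence-form equation $\ddiv(\tilde A\nabla u)=0$, closes this case.

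For case (i) the coefficient $a+b(u^+)^s$ is only $s$-Hölder, so the classical estimate does not apply to $u$ itself. The plan is to pass to the primitive
\[
\Phi(x):=a(x)u(x)+\frac{b(x)}{s+1}(u^+(x))^{s+1},
\]
which satisfies $\Gamma(\Phi)=\Gamma(u)$, $\lambda|u|\leq|\Phi|\leq C|u|$, and the identity $A_s(x,u)\nabla u=\nabla\Phi-u\nabla a-\tfrac{(u^+)^{s+1}}{s+1}\nabla b$. Taking the divergence of this identity, with $\vv{f}\equiv 0$, produces
\[
\Delta\Phi=\ddiv(u\nabla a)+\ddiv\!\left(\tfrac{(u^+)^{s+1}}{s+1}\nabla b\right).
\]
Expanding both divergences (using $a,b\in C^{1,1}$ to put $\Delta a,\Delta b$ in $L^\infty$) and then eliminating the residual $\nabla u$ by substituting $\nabla u=A_s^{-1}\bigl(\nabla\Phi-u\nabla a-\tfrac{(u^+)^{s+1}}{s+1}\nabla b\bigr)$, I would rewrite the equation as
\[
\Delta\Phi=\vv{b}_0(x)\cdot\nabla\Phi+c_0(x)\,\Phi,\qquad\vv{b}_0,c_0\in L^\infty(B_{3/4}).
\]
The clean form depends on the observation that every residual zeroth-order term carries a factor $u$, $(u^+)^{s+1}$, $u(u^+)^s$, or $(u^+)^{2s+1}$, all pointwise bounded by $C|\Phi|$ thanks to $|u|\leq\lambda^{-1}|\Phi|$ and $\|u\|_\infty\leq 1$. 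Hence they absorb neatly into $c_0\Phi$, and the classical estimate gives $\cH^{n-1}(\Gamma(\Phi)\cap B_{1/2})\leq C\,N_\Phi$.

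It remains to compare $N_\Phi$ with $N$. From $|\Phi|\geq\lambda|u|$ I would obtain $\int_{\p B_{3/4}}\Phi^2\geq\lambda^2\int_{\p B_{3/4}}u^2$; from $|\nabla\Phi|\leq C(|\nabla u|+|u|)$, $\int_{B_{3/4}}|\nabla\Phi|^2\leq C\int_{B_{3/4}}(|\nabla u|^2+u^2)$; and a Rellich-type identity obtained by integrating $\ddiv(u^2 x)$ over $B_{3/4}$ provides $\int_{B_{3/4}}u^2\leq C\bigl(\int_{B_{3/4}}|\nabla u|^2+\int_{\p B_{3/4}}u^2\bigr)$. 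Combining these bounds yields $N_\Phi\leq C(N+1)$, and since $\Gamma(\Phi)=\Gamma(u)$, the conclusion follows (the additive constant is harmless, as the nontriviality of $u$ forces $N$ to be bounded below by a positive universal constant via the doubling property of $\Phi$). The main obstacle is the algebraic bookkeeping in case (i): checking that every residual lower-order term factors through $\Phi$ with an $L^\infty$ coefficient, which is precisely where the $C^{1,1}$ (and not merely $C^{1,\alpha}$) regularity of $a$ and $b$ is essentially used, so as to put $\Delta a$ and $\Delta b$ in $L^\infty$.
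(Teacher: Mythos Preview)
Your approach is correct and runs largely parallel to the paper's.  Case (ii) is handled identically: the Lipschitz regularity of $u$ makes the full coefficient $a+b(u^+)^s$ Lipschitz, and \cite[Theorem~1.7]{HS} applies directly.  In case (i) you define the same primitive $\Phi$ (the paper calls it $w$), but then the two arguments diverge slightly.  The paper leaves the transformed equation in the mixed form $\Delta w = B(x)\nabla u + c(x)u$, with $u$ rather than $w$ on the right-hand side, and builds a bespoke two-function Almgren monotonicity formula (Appendix~\ref{section:almgren}) tailored to the relations $\lambda|u|\leq|w|\leq\lambda^{-1}|u|$ and $|\nabla u|\leq\Lambda(|\nabla w|+|w|)$.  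You instead substitute $\nabla u = A_s^{-1}(\nabla\Phi - u\nabla a - \tfrac{(u^+)^{s+1}}{s+1}\nabla b)$ to obtain a self-contained equation $\Delta\Phi=\vv{b}_0\cdot\nabla\Phi + c_0\Phi$ with $\vv{b}_0,c_0\in L^\infty$, and then invoke the classical one-function theory.  This is legitimate---every residual zeroth-order term is indeed $O(|u|)=O(|\Phi|)$, so $c_0$ is bounded and measurable---and has the advantage of bypassing the custom monotonicity lemma entirely.

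One point is under-argued.  Your Rellich identity gives only $\int_{B_{3/4}}u^2\leq C\bigl(\int_{B_{3/4}}|\nabla u|^2+\int_{\partial B_{3/4}}u^2\bigr)$, hence $N_\Phi\leq C(N+1)$, and your absorption of the additive constant via ``nontriviality forces $N$ bounded below'' is not quite complete as stated: $N$ can be arbitrarily small when $u$ is close to a nonzero constant.  What is true is that if $\Gamma(u)\cap B_{1/2}=\emptyset$ the estimate is vacuous, while if $u$ vanishes somewhere in $B_{1/2}$ one can indeed show $N\geq c(n,\lambda)>0$; but this requires an argument.  The paper sidesteps the issue by proving (Lemma~\ref{lemma:leb}, which extends verbatim to \eqref{eq:higher}) that both $\{u>0\}$ and $\{u<0\}$ have Lebesgue measure bounded below in terms of $N$, so the Poincar\'e inequality applies to $u^\pm$ directly and yields $\int_{B_{3/4}}u^2\leq \kappa\int_{B_{3/4}}|\nabla u|^2$ with no boundary term---giving $N_\Phi\leq CN$ without any additive remainder.
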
  


\subsection{Optimal Regularity of Solutions}\label{subsection:reg-h}

Let $u$ be a (nontrivial) weak solution to \eqref{eq:higher} with $|u|\leq 1$ in $B_1$. The optimal regularity of $u$ for the case $0<s-m<1$ is rather easy. 

\begin{proof}[Proof of Theorem \ref{theorem:reg-h} (i)] Let us consider the case $0<s<1$ only, as the other case can be proved in a similar manner. Since $u$ is a bounded solution to \eqref{eq:higher}, we deduce from the De-Giorgi theory that $u\in C^\gamma(\bar{B}_{7/8})$ for a universal $0<\gamma<1$. As we define $\tilde{A}_s(x) = A_s(x,u(x))$, we obtain $\tilde{A}_s\in C^{\gamma s}(\bar{B}_{7/8})$. This in turn yields that $u\in C^{1,\gamma s}(\bar{B}_{3/4})$ by the interior H\"{o}lder estimates for gradients, and in particular, $u^+\in C^{0,1}(\bar{B}_{3/4})$. This yields $\tilde{A}_s\in C^{s}(\bar{B}_{3/4})$. Again by the interior H\"{o}lder estimates for gradients, $u\in C^{1,s}(\bar{B}_{1/2})$, as desired. 
\end{proof}

One may find in this proof that the Lipschitz regularity of $a$, $b$ and $f$ has played no crucial role. Indeed, one may weaken their regularity up to $C^s(B_1)$ class, though we omit the obvious proof.

\begin{remark}\label{remark:C1a-h} The sharpness of Theorem \ref{theorem:reg-h} can be easily deduced by considering the constant coefficient case. If $a$ and $b$ are assumed to be constant, then $\phi(u)=a u + b\frac{(u^+)^{s+1}}{s+1}$ is harmonic and, thus, smooth. However, since $\phi'(u) \geq a$ and $\phi \in C^{m+1,s-m}(\R)$ (with $0<s-m<1$ and $m\in\Z$), the inverse function theorem implies that $u$ is $C^{m+1,s-m}$ at best.
\end{remark}

Let us move on to the case $s=m$, that is, $s$ is a positive integer. 

\begin{proof}[Proof of Theorem \ref{theorem:reg-h} (ii)] Throughout this proof, we shall write $C_{m,\alpha}$ a positive constant depending only on $n$, $\lambda$, $m$, $\alpha$, the $C^{m,\alpha}$ norms of $a$ and $\vv{f}$, and the $C^{m-1,1}$ norm of $b$. We will also let it change from one line to another. 

Due to the assumption that  $a,\vv{f}\in C^{m,\alpha}(\bar{B}_1) \subset C^{m-1,1}(\bar{B}_1)$, we may follow the bootstrap argument in the proof of Theorem \ref{theorem:reg-h} (i) and derive that $u\in C^{m,\alpha}(\bar{B}_{7/8})$ and 
\begin{equation}\label{eq:Cma-u}
\norm{u}_{C^{m,\alpha}(\bar{B}_{7/8})} \leq C_{m,\alpha},
\end{equation}

Choose any $n$-dimensional multi-index $\mu$ with $|\mu|=m$, and write $w = D^\mu u$. Then $w$ satisfies 
\begin{equation}\label{eq:higher-uk}
\ddiv(\tilde{A}(x)\nabla w) = \ddiv\vv{F}(x) \quad\text{in }B_1,
\end{equation}
in the weak sense, where 
\begin{equation}\label{eq:tA}
\tilde{A}(x) = a(x) + b(x) (u^+(x))^m,
\end{equation}
and
\begin{equation}\label{eq:F}
\vv{F}(x) = -\sum_{0\leq \mu'< \mu} (D^{\mu-\mu'}\tilde{A}(x))\nabla (D^{\mu'} u)(x) + D^\mu\vv{f}(x).
\end{equation}

First we observe that $\tilde{A}\in C^{m-1,1}(\bar{B}_{7/8})\subset C^{0,1}(\bar{B}_{7/8})$ and that $\vv{F} \in L^\infty(B_{7/8})$, since all $a$, $b$, $\vv{f}$ and $u$ belong to the class $C^{m-1,1}(\bar{B}_{7/8})$. However, we shall see that $\vv{F}(x) - \vv{F}(z)$ behaves as $|x-z|^\alpha$ around any point $z\in\Gamma(u)\cap \bar{B}_{1/2}$. 

Let us rephrase $\vv{F}$ as
\begin{equation*}
\vv{F}(x) = (u^+(x))^m D^\mu b(x) \nabla u(x) + \vv{R}(x),
\end{equation*}
where the remainder term $\vv{R}$ is composed of the derivatives of $a$, $\vv{f}$ and $u$ with order at most $m$ as well as the derivatives of $b$ with order at most $m-1$. Due to the assumption that $a,\vv{f}\in C^{m,\alpha}(\bar{B}_1)$, $b\in C^{m-1,1}(\bar{B}_1)$ and the observation above that $u\in C^{m,\alpha}(\bar{B}_{7/8})$, we have 
\begin{equation}\label{eq:remainder}
\left| \vv{R}(x) - \vv{R}(z) \right| \leq C_{m,\alpha}|x-z|^\alpha,
\end{equation}
for any $x,z\in \bar{B}_{7/8}$. 

Now we fix $z\in\Gamma(u)\cap \bar{B}_{1/2}$. Then it follows from \eqref{eq:Cma-u} that
\begin{equation}\label{eq:other}
\left| (u^+(x))^m D^\mu b(x) \nabla u(x) \right| \leq C_{m,\alpha}|x-z|^m,
\end{equation}
for any $x\in B_{3/8}(z)$. Thus, combining \eqref{eq:remainder} with \eqref{eq:other}, we arrive at
\begin{equation}\label{eq:F-Ca}
\left| \vv{F}(x) - \vv{F}(z) \right| \leq C_{m,\alpha} |x-z|^\alpha,
\end{equation}
for any $x\in B_{3/8}(z)$. 
 
To this end, we may apply the classical Schauder theory (for divergence type equations) (for instance, see \cite[Theorem 8.32]{GT} and the comments below) to \eqref{eq:higher-uk} and obtain an affine polynomial $P = P_z$ such that 
\begin{equation*}
| w(x) - P(x) | \leq M_{m,\alpha} |x-z|^{1+\alpha}\quad\text{and}\quad |\nabla P|\leq N_{m,\alpha}
\end{equation*}
for any $x\in B_{r_{m,\alpha}}(z)$, where $M_{m,\alpha}$ and $N_{m,\alpha}$ are some constants depending only on universal quantities and $m$ and $\alpha$. Since $P(z) = w(z)$, we have 
\begin{equation*}
| w(x) - w(z) | \leq C_{m,\alpha} |x-z|,
\end{equation*}
for any $x\in B_{r_{m,\alpha}}(z)$, which proves the second inequality of Theorem \ref{theorem:reg-h} (ii). Arguing as in the proof of Theorem \ref{theorem:main} (i), we can verify the first assertion of Theorem \ref{theorem:reg-h} (i) as well, whence the proof is finished. 
\end{proof}


\subsection{Hausdorff Measure of Nodal Sets}\label{subsection:hausdorff-h}

Unlike the regularity theory for solutions to \eqref{eq:main}, it is easier to prove the finiteness of Hausdorff measure of zero level surfaces with the case $s\geq 1$. 

\begin{proof}[Proof of Theorem \ref{theorem:hauss-h} (ii)] Suppose that $s\geq 1$. Then by Theorem \ref{theorem:reg-h}, $u^+\in C^{0,1}(\bar{B}_{3/4})$ which implies the Lipschitz continuity of the coefficient of \eqref{eq:higher}; i.e., $A_s(x,u(x)) = a(x) + b(x)(u^+(x))^s$ is Lipschitz in $x$ in $\bar{B}_{3/4}$, whose norm depends only on $n$, $\lambda$ and the Lipschitz constants of $a$ and $b$. Thus, the conclusion follows directly from \cite[Theorem 1.7]{HS}. 
\end{proof}

Now we move onto the case $0<s<1$ and $a,b\in C^{1,1}(\bar{B}_1)$, that is, the assumptions of Theorem \ref{theorem:hauss-h} (i). We shall proceed as in the proof of Theorem \ref{theorem:main-deg}. Set
\begin{equation*}
w(x) = a(x) u(x) + b(x)\frac{(u^+(x))^{s+1}}{s+1}\quad\text{in }\bar{B}_1.
\end{equation*}
Then $w$ becomes a weak solution to  
\begin{equation}\label{eq:w-h}
\Delta w = B(x)\nabla u + c(x)u\quad\text{in }B_1,
\end{equation}
where
\begin{equation*}
B(x)= \nabla a(x) + (u^+(x))^s\nabla b(x)\quad\text{and}\quad c(x)=  \Delta a(x) +\frac{(u^+(x))^s}{s+1}\Delta b(x).
\end{equation*}
It is noteworthy that under the assumption of Theorem \ref{theorem:hauss-h} (i), we have $B, c\in L^\infty(B_1)$ with 
\begin{equation}\label{eq:Bc}
\norm{B}_{L^\infty(B_1)} + \norm{c}_{L^\infty(B_1)}\leq \kappa,
\end{equation}
where $\kappa>0$ is a constant depending only on $n$, $\lambda$ and the $C^{1,1}$ norms of $a$ and $b$. 

\begin{proof}[Proof of Theorem \ref{theorem:hauss-h} (i)] 
By the structure condition \eqref{eq:ellip-h}, we have 
\begin{equation*}
\lambda |u|\leq |w|\leq \frac{2}{\lambda}|u|.
\end{equation*}
Moreover, since $\nabla w = u(\nabla a + \frac{(u^+)^s}{s+1}\nabla b) + (a+ b(u^+)^s)\nabla u$, we know from \eqref{eq:Bc} that 
\begin{equation*}
|\nabla u|\leq \Lambda(|\nabla w| + |w|),
\end{equation*}
for some $\Lambda>1$ depending only on $n$, $\lambda$ and the $C^{1,1}$ norms of $a$ and $b$.

Owing to these inequalities altogether with \eqref{eq:Bc}, we observe that the equation \eqref{eq:w-h} belongs to the class of \eqref{eq:w-alm}, so we can use all the arguments in Appendix \ref{section:almgren}. As a result, we have the almost monotonicity of Almgren's frequency formula for $w$ as in Theorem \ref{theorem:almgren}. An easy application to this is that $w$ possesses doubling property; namely, there are positive universal constants $c_0$, $c_1$, $c_2$ and $r_0$ such that for any $z\in B_{r_0/4}$ and $0<r\leq r_0/4$, 
\begin{equation*}
\int_{\p B_{2r}(z)}w^2 \leq c_0 2^{c_1N(r_0)+c_2}\int_{\p B_r(z)}w^2.
\end{equation*}

On the other hand, from Theorem \ref{theorem:reg-h} (i) we know that $u\in C^{1,s}(B_{3/4})$, whence $w\in C^{1,s}(B_{3/4})$ as well. With this regularity and the doubling property at hand, we may follow the proof of \cite[Theorem 1.7]{HS} and deduce that 
\begin{equation*}
\cH^{n-1}(\Gamma(w)\cap B_{1/2})\leq C\frac{\frac{3}{4}\int_{B_{3/4}}|\nabla w|^2}{\int_{\p B_{3/4}}w^2}.
\end{equation*}
Finally, we may recover the desired estimate on $\cH^{n-1}(\Gamma(u)\cap B_{1/2})$, by arguing similarly as in the proof of Theorem \ref{theorem:main-deg} (ii). 
\end{proof}

%
%

\appendix

%
%

\section{Pointwise Regularity of Elliptic Equations}\label{appendix:ptwise}

First, we state a result for pointwise $L^2$-BMO estimates. 

\begin{lemma}\label{lemma:bmo} There are positive numbers $\eta$ and $C$ depending only on $n$ such that if $\vv{f}\in L^2(B_1)$ satisfies
\begin{equation}\label{eq:assump-bmo}
\fint_{B_r}\left|\vv{f}\right|^2\leq \eta^2,
\end{equation}
for any $0<r\leq 1$, then a weak solution $v$ of $\Delta v = \ddiv\vv{f}$ in $B_1$ satisfies $L^2$-BMO estimates at the origin; that is, 
\begin{equation}\label{eq:bmo}
\fint_{B_r}|\nabla v - (\nabla v)_r|^2\leq C\fint_{B_1}v^2,
\end{equation}
for any $0<r\leq \frac{1}{2}$, where $(\nabla v)_r=\fint_{B_r}\nabla v$.
\end{lemma}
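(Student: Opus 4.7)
The plan is to run a classical Morrey--Campanato iteration. On each ball $B_r \subset B_1$ I would split $v = h + w$, where $h$ is the harmonic replacement of $v$ in $B_r$ (that is, $\Delta h = 0$ in $B_r$ with $h = v$ on $\p B_r$) and $w := v - h \in W_0^{1,2}(B_r)$ solves $\Delta w = \ddiv \vv{f}$ weakly. Testing this equation against $w$ itself and applying Cauchy--Schwarz yields the energy bound
$$
\int_{B_r}|\nabla w|^2 \;\leq\; \int_{B_r}|\vv{f}|^2 \;\leq\; \omega_n \eta^2 r^n,
$$
where the last inequality is exactly the smallness assumption \eqref{eq:assump-bmo}.

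For the harmonic piece, each component of $\nabla h$ is itself harmonic, so the standard interior mean-oscillation decay gives
$$
\int_{B_{\tau r}}|\nabla h - (\nabla h)_{\tau r}|^2 \;\leq\; C_*\, \tau^{n+2} \int_{B_r}|\nabla h - (\nabla h)_r|^2, \qquad \tau \in (0,1),
$$
with $C_*$ depending only on $n$. Setting $\phi(\rho) := \int_{B_\rho}|\nabla v - (\nabla v)_\rho|^2$ and combining the decomposition $\nabla v = \nabla h + \nabla w$ with the variance-minimizing property of the average and the triangle inequality, I would extract a one-step iteration bound of the shape
$$
\phi(\tau r) \;\leq\; C_0\, \tau^{n+2}\, \phi(r) + C_1\, \eta^2\, r^n, \qquad 0 < r \leq \tfrac{1}{2}.
$$

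I would then fix $\tau \in (0, 1/2)$ small enough (depending only on $n$) so that $C_0 \tau^{n+2} \leq \tfrac{1}{2} \tau^n$ and iterate dyadically on $r_k = \tau^k / 2$. The normalized functional $\psi(r) := r^{-n}\phi(r)$ satisfies $\psi(\tau r) \leq \tfrac{1}{2}\psi(r) + C_2 \eta^2$, which telescopes to $\psi(r) \leq C\bigl(\psi(1/2) + \eta^2\bigr)$ uniformly on $(0, 1/2]$. The anchor $\psi(1/2)$ is then controlled by the Caccioppoli inequality with a standard cutoff between $B_{1/2}$ and $B_{3/4}$, giving
$$
\int_{B_{1/2}}|\nabla v|^2 \;\leq\; C\Bigl(\int_{B_1} v^2 + \int_{B_1}|\vv{f}|^2\Bigr) \;\leq\; C\Bigl(\int_{B_1} v^2 + \eta^2\Bigr),
$$
and \eqref{eq:bmo} follows after dividing by $|B_r|$, with $\eta$ chosen universally small so that the additive $\eta^2$ term is absorbed into the constant (or, more cleanly, lumped with $\fint_{B_1} v^2$).

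The proof is essentially routine bookkeeping; the only structural subtlety is that the smallness assumption \eqref{eq:assump-bmo} must hold at \emph{every} scale $0 < r \leq 1$, not just on $B_1$. This scale-invariance is exactly what keeps the forcing contribution $C_1 \eta^2 r^n$ in step with the leading harmonic decay term $C_0 \tau^{n+2} \phi(r)$ across the iteration, preventing degeneration as $r \to 0$. In the applications (e.g., Lemma \ref{lemma:L2-BMO}) this is arranged by centering at a free boundary point and exploiting the Hölder continuity of the coefficients encoded in $\sigma_z$, so that $\vv{f} + \sigma_z \nabla u$ inherits the required scale-invariant smallness.
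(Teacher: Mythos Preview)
Your argument is correct and follows the same harmonic-replacement-plus-iteration philosophy as the paper, but the packaging is different. The paper iterates at the level of $v$: it builds a sequence of affine polynomials $P_l$ with $\fint_{B_{\rho^l}}|v-P_l|^2\leq\rho^{2l}$, controlling $v_k-h$ via the Gagliardo--Nirenberg--Sobolev inequality at each step, and only at the very end converts to a gradient statement through a local energy estimate. You instead iterate directly on the gradient oscillation $\phi(r)=\int_{B_r}|\nabla v-(\nabla v)_r|^2$, using the energy identity for $w\in W_0^{1,2}(B_r)$ in place of Sobolev embedding. Your route is slightly more economical for this particular lemma (no embedding, no final conversion step); the paper's formulation, on the other hand, is set up so that the same machinery immediately upgrades to the higher-order $C^{d,\alpha}$ statement of Lemma~\ref{lemma:C1a} by replacing affine $P_l$ with degree-$d$ harmonic polynomials.

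One common wrinkle: both your argument and the paper's end with a bound of the form $C\bigl(\fint_{B_1}v^2+\eta^2\bigr)$ rather than literally $C\fint_{B_1}v^2$. The paper handles this by ``normalization,'' and you propose absorbing or lumping the $\eta^2$; neither is entirely clean when $\fint_{B_1}v^2$ is small, but in the application (Lemma~\ref{lemma:L2-BMO}) only a uniform bound is needed, so the point is harmless.
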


\begin{proof} By a normalization argument, one may assume without loss of generality that $\fint_{B_r}v^2\leq 1$. 

\begin{claim*} There exist a positive number $\rho$, depending only on $n$, and a sequence $\{P_l\}_{l=1}^\infty$ of polynomials with degree $1$ such that 
\begin{equation}\label{eq:claim-bmo}
\fint_{B_{\rho^l}}|v - P_l|^2 \leq \rho^{2l},\quad l=0,1,2,\cdots.
\end{equation}
\end{claim*}

Notice that the initial case is simply established by choosing $P_0\equiv 0$. Now suppose that there is an affine function $P_k$ for some $k\geq 1$ such that \eqref{eq:claim-bmo} is satisfied for $l=k$. 

Define 
\begin{equation*}
v_k(x)=\frac{1}{\rho^k}(v-P_k)(\rho^kx)\quad\text{in }B_1.
\end{equation*}
Observe that $v_k$ is a weak solution of 
\begin{equation}\label{eq:vk-bmo}
\Delta v_k=\ddiv\vv{f_k}\quad\text{in }B_1,
\end{equation}
where $\vv{f_k}(x)=\vv{f}(\rho^k x)$. From \eqref{eq:assump-bmo} we know that 
\begin{equation}\label{eq:fk-bmo} 
\fint_{B_1}\left|\vv{f_k}\right|^2 = \fint_{B_{\rho^k}}\left|\vv{f}\right|^2\leq \eta^2.
\end{equation}

Let $h$ be a harmonic replacement of $v_k$ in $B_1$. That is, $h$ is a harmonic function in $B_1$ such that $h=v_k$ on $\p B_1$. Then since $\Delta(v_k-h)=\ddiv\vv{f_k}$ in $B_1$ with $v_k-h\in W_0^{1,2}(B_1)$, we obtain 
\begin{equation*}
\norm{\nabla (v_k-h)}_{L^2(B_1)}\leq \norm{\vv{f_k}}_{L^2(B_1)}\leq c_1\eta,
\end{equation*}
where $c_1$ depends only on $n$. The Gagliardo-Nirenberg-Sobolev inequality tells us that
\begin{equation*}
\norm{v_k-h}_{L^{\frac{2n}{n-2}}(B_1)}\leq c_2\eta,
\end{equation*}
where $c_2$ depends only on $n$. Notice that $\frac{2n}{n-2}$ is the Sobolev conjugate of $2$. The last inequality combined with the Schwartz inequality in turn implies that
\begin{equation}\label{eq:vk-h-bmo}
\fint_{B_\rho}|v_k-h|^2 \leq \left(\int_{B_1}|v_k-h|^{\frac{2n}{n-2}}\right)^{1-\frac{2}{n}}|B_\rho|^{\frac{2}{n}-1}\leq c_3\eta^2\rho^{2-n},
\end{equation}
where $c_3$ depends only on $n$. 

Define $\tilde{P}_k(x)= h(0) + \nabla h(0)\cdot x$. By the interior gradient estimate for harmonic functions, it is not hard to see that 
\begin{equation}\label{eq:h-tlk-bmo}
\norm{h-\tilde{P}_k}_{L^\infty(B_\rho)}\leq c_4\rho^2,
\end{equation}
for any $0<\rho\leq \frac{1}{2}$, where $c_4$ depends only on $n$. Using \eqref{eq:vk-h-bmo} and \eqref{eq:h-tlk-bmo}, we deduce that
\begin{equation}\label{eq:vk-tlk-bmo}
\fint_{B_\rho}|v_k-\tilde{P}_k|^2 \leq \fint_{B_\rho}|v_k-h|^2 + \fint_{B_\rho}|h-\tilde{P}_k|^2\leq c_3\eta^2\rho^{2-n} + c_4^2\rho^4\leq \rho^2,
\end{equation}
provided that we have chosen $\rho$ such that $c_4^2\rho^2\leq\frac{1}{2}$ and accordingly $\eta$ so that $c_3\eta^2\leq \frac{1}{2}\rho^n$.

Finally we define a first order polynomial $P_{k+1}$ by 
\begin{equation*}
P_{k+1}(x) = P_k(x) + \rho^k\tilde{P}_k(\rho^{-k}x),
\end{equation*}
by which one may easily verify that \eqref{eq:claim-bmo} holds with $l=k+1$. The proof of the claim is now finished by the induction principle. 

With the presence of sequence $\{P_l\}_{l=1}^\infty$ satisfying \eqref{eq:claim-bmo}, it is now easy to prove \eqref{eq:bmo}. Indeed, as $v_k$ satisfies $\fint_{B_1}v_k^2\leq 1$ and \eqref{eq:vk-bmo} in $B_1$, we obtain from the local energy estimate that 
\begin{equation*}
\fint_{B_{1/2}}|\nabla v_k|^2 \leq C,
\end{equation*}
where $C$ depends only on $n$. Rescaling and using the fact that $\nabla P_k$ is a constant vector, we arrive at
\begin{equation*}
\fint_{B_{\rho^k}{2}}|\nabla v-(\nabla v)_{\rho^k}|^2 \leq 2\fint_{B_{\rho^k}}|\nabla v - \nabla P_k|^2 \leq 2C,
\end{equation*} 
finishing the proof. 
\end{proof}

Next we give a pointwise $C^{d,\alpha}$ regularity.

\begin{lemma}\label{lemma:C1a} Given $0<\alpha<1$ and $d\geq 1$ an integer, there is a positive number $\eta$ depending only on $n$, $d$ and $\alpha$ such that if $\vv{f}\in L^p(B_1)$ for some $p>n$ satisfying
\begin{equation}\label{eq:assump-f}
\left(\fint_{B_r}\left|\vv{f}\right|^p\right)^{\frac{1}{p}}\leq \eta r^{d-1+\alpha},
\end{equation}
for any $0<r\leq 1$, then a weak solution $v$ of $\Delta v = \ddiv\vv{f}$ in $B_1$ with $\norm{v}_{L^\infty(B_1)}\leq 1$ is $C^{d,\alpha}$ at the origin. That is, there is a homogeneous harmonic polynomial $P$ of degree $d$ such that 
\begin{equation}\label{eq:v-P}
|v(x)-P(x)|\leq M_d|x|^{d+\alpha},
\end{equation}
for any $x\in B_1$, and 
\begin{equation}\label{eq:v-P-Db}
\sum_{|\beta|\leq d}|D^\beta P(0)|\leq N_d.
\end{equation}
Moreover, we have 
\begin{equation}\label{eq:grad(v-P)}
\left(\fint_{B_r}|\nabla (v-P)|^p\right)^{\frac{1}{p}}\leq K_dr^{d-1+\alpha},
\end{equation}
for any $0<r\leq \frac{1}{2}$. Here $r_d$, $M_d$, $N_d$ and $K_d$ depend only on $n$, $d$, $\alpha$ and $p$. 
\end{lemma}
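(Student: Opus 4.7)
The plan is to carry out the standard Campanato-type iteration, in the same spirit as the proof of Lemma~\ref{lemma:bmo} but pushed up to polynomial approximations of degree $d$, and to exploit the sharper $L^p$-hypothesis \eqref{eq:assump-f} on $\vv{f}$. I would fix parameters $\rho\in(0,1)$ and $\eta>0$, to be tuned at the end in terms of $n$, $d$, $\alpha$ only, and prove by induction the existence of harmonic polynomials $\{P_k\}_{k\ge 0}$ with $\deg P_k\le d$, $P_0\equiv 0$, controlled coefficient-increments
\[
|D^\beta(P_{k+1}-P_k)(0)|\le C_0\,\rho^{k(d+\alpha-|\beta|)}\qquad(|\beta|\le d),
\]
and the Campanato-type bound
\[
\fint_{B_{\rho^k}}|v-P_k|^2\le \rho^{2k(d+\alpha)}.
\]
The case $k=0$ is immediate from $\|v\|_{L^\infty(B_1)}\le 1$.

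For the inductive step, I would set $v_k(x)=\rho^{-k(d+\alpha)}(v-P_k)(\rho^k x)$ on $B_1$. Since $P_k$ is harmonic, $v_k$ weakly solves $\Delta v_k=\ddiv\vv{f}_k$ with $\vv{f}_k(x)=\rho^{-k(d-1+\alpha)}\vv{f}(\rho^k x)$; the exponent $d-1+\alpha$ in \eqref{eq:assump-f} is designed precisely so that $\bigl(\fint_{B_r}|\vv{f}_k|^p\bigr)^{1/p}\le\eta\,r^{d-1+\alpha}$ for every $0<r\le 1$. Let $h_k$ be the harmonic replacement of $v_k$ in $B_1$. Testing $\Delta(v_k-h_k)=\ddiv\vv{f}_k$ against $v_k-h_k\in W^{1,2}_0(B_1)$ together with Sobolev embedding yields $\fint_{B_\rho}|v_k-h_k|^2\le C(n)\eta^2\rho^{-n}$. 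Writing $\tilde P_k$ for the degree-$d$ Taylor polynomial of $h_k$ at the origin---which is itself harmonic, because $\Delta\tilde P_k$ coincides with the degree-$(d-2)$ Taylor polynomial of $\Delta h_k\equiv 0$---interior Cauchy estimates for harmonic functions give $\|h_k-\tilde P_k\|_{L^\infty(B_\rho)}\le C(n,d)\rho^{d+1}$. Choosing $\rho$ small enough that $C(n,d)\rho^{2(d+1)}\le \tfrac12\rho^{2(d+\alpha)}$ (possible since $d+1>d+\alpha$), and then $\eta$ small enough that $C(n)\eta^2\rho^{-n}\le\tfrac12\rho^{2(d+\alpha)}$, the updated polynomial
\[
P_{k+1}(x):=P_k(x)+\rho^{k(d+\alpha)}\tilde P_k(\rho^{-k}x)
\]
is still harmonic of degree $\le d$, and undoing the scaling closes the inductive hypothesis at level $k+1$.

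Passing to the limit, the coefficient-increment bound makes $\{P_k\}$ Cauchy in the finite-dimensional space of polynomials of degree $\le d$, with limit $P$ automatically satisfying \eqref{eq:v-P-Db}. For \eqref{eq:v-P}, at a point with $|x|\in[\rho^{k+1},\rho^k]$ I would upgrade the averaged bound $\fint_{B_{\rho^k}}|v-P_k|^2\le\rho^{2k(d+\alpha)}$ to a sup-norm bound on $B_{\rho^k/2}$ via the Moser/De Giorgi $L^2\to L^\infty$ estimate applied to the rescaled equation (whose right-hand side is already small in $L^p$ with $p>n$), and then combine with the geometric tail $\|P-P_k\|_{L^\infty(B_{\rho^k})}\le C\rho^{k(d+\alpha)}$ from the coefficient bound. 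For \eqref{eq:grad(v-P)}, the interior $W^{1,p}$ estimate applied to $v_k-\tilde P_k$ on $B_{1/2}$---valid precisely because $p>n$---and rescaled back gives the decay at rate $r^{d-1+\alpha}$.

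The construction of the sequence is essentially routine; the real technical point is the passage from the $L^2$-averaged Campanato-type smallness at each scale to the pointwise bound \eqref{eq:v-P} and the $L^p$-gradient bound \eqref{eq:grad(v-P)}. This is why the hypothesis on $\vv{f}$ is placed in $L^p$ with $p>n$: Morrey embedding and $W^{1,p}$-theory for $\Delta w=\ddiv\vv{g}$ are what convert averaged smallness at each scale into sup-norm and gradient-$L^p$ smallness. One notational wrinkle worth flagging is that the construction naturally produces a harmonic polynomial of degree $\le d$ rather than literally a homogeneous one; in the applications inside the paper (Lemmas~\ref{lemma:u-d} and~\ref{lemma:v-Cda-deg}) $v$ is a priori assumed to vanish to order $d-1$ at the base point, which forces the lower-order Taylor coefficients of $P$ to vanish and makes $P$ homogeneous of degree exactly $d$, as the statement asserts.
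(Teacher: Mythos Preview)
Your argument is correct and follows the same Campanato-iteration skeleton as the paper's proof: rescale, replace by a harmonic function, peel off a degree-$d$ Taylor polynomial, iterate. The one substantive difference is where you exploit the hypothesis $p>n$. You run the iteration in $L^2$ (energy estimate plus Sobolev for the error $v_k-h_k$), and only at the end invoke $p>n$ to upgrade the averaged Campanato bound to the pointwise estimate \eqref{eq:v-P} via De Giorgi--Moser and to the gradient estimate \eqref{eq:grad(v-P)} via interior $W^{1,p}$ theory. The paper instead uses $p>n$ from the outset, through an ``enhanced approximation lemma'' asserting that $\|v-h\|_{L^\infty(B_1)}\le C\varepsilon$ whenever $(\fint_{B_1}|\vv f|^p)^{1/p}\le\varepsilon$; the iteration is then carried out directly in $L^\infty$, so \eqref{eq:v-P} falls out without a separate upgrading step. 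Your route is slightly longer but self-contained; the paper's is shorter but relies on stating (without proof) the sup-norm approximation, which is itself a consequence of $W^{1,p}$ estimates and Morrey embedding. Your closing remark about homogeneity versus degree~$\le d$ is well taken and matches how the lemma is actually used in Lemmas~\ref{lemma:u-d} and~\ref{lemma:v-Cda-deg}.
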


\begin{proof} As $\vv{f}$ is integrable with a higher exponent, we may use an enhanced approximation lemma, which is stated as follows: Given $\e>0$, if $\vv{f}\in L^p(B_1)$ for $p>n$ satisfying
\begin{equation*}
\left(\fint_{B_1}|\vv{f}|^p\right)^{1/p}\leq \e,
\end{equation*}
then for any weak solution $v$ to $\Delta v = \ddiv\vv{f}$ in $B_1$ with $\norm{v}_{L^\infty(B_1)}\leq 1$,
there exists a harmonic function $h$ in $B_1$ such that 
\begin{equation*}
\norm{v-h}_{L^\infty(B_1)}\leq C\e,
\end{equation*}
where $C$ depends only on $n$. Hence, we may use this lemma instead of local energy estimates to proceed with the induction principle. We leave out the details to the reader, since they mainly follow the proof of Lemma \ref{lemma:bmo}.
\end{proof}

%
%

\section{Almgren's Monotonicity Formula}\label{section:almgren}

Let $u\in W^{1,2}(B_1)$ and suppose that $w\in W^{1,2}(B_1)$ is a function satisfying  
\begin{equation}\label{eq:u:w}
\lambda |u| \leq |w| \leq \frac{1}{\lambda} |u|\quad\text{and}\quad |\nabla u|\leq \Lambda(|\nabla w| + |w|)\quad\text{a.e. in }B_1,
\end{equation}
for some constants $0<\lambda<1$ and $\Lambda>1$. 

We are interested in the case when $w$ satisfies 
\begin{equation}\label{eq:w-alm}
- \Delta w + \vv{b}\nabla u + c u=0\quad\text{in }B_1,
\end{equation}
in the weak sense, where $\vv{b},c\in L^\infty(B_1)$ with 
\begin{equation*}
\norm{\vv{b}}_{L^\infty(B_1)} + \norm{c}_{L^\infty(B_1)}\leq \kappa.
\end{equation*}

Throughout this section, any constant $C$ is assumed to be positive and rely only on parameters $n$, $\lambda$, $\Lambda$ and $\kappa$, unless otherwise stated. Also by $C_f$ we will denote a universal constant depending further but at most on the parameter $f$.

Set  
\begin{align*}
H(r) &= \int_{\p B_r}w^2,\quad I(r) = \int_{B_r}(|\nabla w|^2 + w\vv{b} \nabla u + cwu),
\end{align*}
and 
\begin{equation*}
N(r) = \frac{rI(r)}{H(r)}.
\end{equation*}
We call $N(r)$ Almgren's frequency formula and shall prove that it is almost monotone in $r$, as stated below.

\begin{theorem}\label{theorem:almgren} Let $w$ be a weak solution of \eqref{eq:w-alm} in $B_1$. Then we have
\begin{equation}\label{eq:almgren}
N(r) \leq c_1 + c_2 N(r_0),
\end{equation}
for any $0<r\leq r_0$.
\end{theorem}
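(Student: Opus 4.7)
The plan is to follow the classical derivation of Almgren's monotonicity formula, where exact monotonicity of $N(r)$ is now degraded to an \emph{almost}-monotonicity because of the perturbation $\vv{b}\nabla u+cu$. The four main ingredients are a clean boundary representation of $I(r)$, a formula for $H'(r)$, a Rellich/Pohozaev identity for $I'(r)$, and a Cauchy--Schwarz inequality controlling the dominant part of $(\log N)'$; the final bound \eqref{eq:almgren} is then obtained via a Gronwall argument.

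First, integrating $|\nabla w|^2$ by parts on $B_r$ and substituting $\Delta w=\vv{b}\nabla u+cu$ from \eqref{eq:w-alm} makes the bulk ``lower-order'' term in the definition of $I(r)$ cancel exactly, yielding
\[
I(r)=\int_{\partial B_r}w\,w_\nu,\qquad H'(r)=\tfrac{n-1}{r}H(r)+2I(r).
\]
Next, I would apply the Rellich identity $\ddiv\bigl((x\cdot\nabla w)\nabla w-\tfrac12 x|\nabla w|^2\bigr)=-\tfrac{n-2}{2}|\nabla w|^2+(x\cdot\nabla w)\Delta w$ on $B_r$, substitute the equation, and combine with the direct formula $I'(r)=\int_{\partial B_r}(|\nabla w|^2+w\vv{b}\nabla u+cwu)$ to arrive at
\[
rI'(r)=2r\int_{\partial B_r}w_\nu^2+(n-2)I(r)+E(r),
\]
where $E(r)$ collects two bulk integrals, $\int_{B_r}(w\vv{b}\nabla u+cwu)$ and $\int_{B_r}(x\cdot\nabla w)(\vv{b}\nabla u+cu)$, and a boundary integral $r\int_{\partial B_r}(w\vv{b}\nabla u+cwu)$. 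Logarithmic differentiation then produces
\[
\frac{N'(r)}{N(r)}=\frac{2\int_{\partial B_r}w_\nu^2}{I(r)}-\frac{2I(r)}{H(r)}+\frac{E(r)}{rI(r)},
\]
and the first difference is $\geq 0$ by Cauchy--Schwarz applied to $I(r)=\int_{\partial B_r}w w_\nu$.

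Everything then reduces to a quantitative bound on $E(r)/(rI(r))$. Using \eqref{eq:u:w} to replace $|u|,|\nabla u|$ by $|w|,|\nabla w|$, together with $\|\vv{b}\|_{L^\infty}+\|c\|_{L^\infty}\leq\kappa$ and Young's inequality with small $\varepsilon$, each piece of $E$ is bounded by $C(I(r)+H(r)/r)+\varepsilon r\int_{\partial B_r}(|\nabla w|^2+w_\nu^2)$. The $\varepsilon$-term is absorbed back on the left after re-expressing $\int_{\partial B_r}|\nabla w|^2$, via one more Rellich application, in terms of $\int_{\partial B_r}w_\nu^2$ plus lower-order bulk pieces. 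This yields a differential inequality of the form $N'(r)\geq -C_1-C_2 N(r)$ on $(0,r_0]$, whose integration from $r$ to $r_0$ gives \eqref{eq:almgren}.

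The main obstacle I expect is twofold. First, $N(r)$ is \emph{a priori} not well-defined unless $I(r),H(r)>0$: positivity of $H$ follows from standard unique continuation for \eqref{eq:w-alm}, but positivity of $I$ requires absorbing the bulk ``lower-order'' correction into $\int_{B_r}|\nabla w|^2$ via \eqref{eq:u:w} and a Poincar\'e-type estimate, which works only after restricting to small $r$. Second, the boundary piece of $E(r)$ involves $\nabla u$ on the sphere and is genuinely delicate; the Young-absorption procedure must be tight enough to preserve the Cauchy--Schwarz sign of the main part of $(\log N)'$, which forces careful bookkeeping of the constants and ultimately dictates the specific values of $c_1,c_2$ in \eqref{eq:almgren}.
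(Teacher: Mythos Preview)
Your plan is correct and follows the same Garofalo--Lin template as the paper: represent $I(r)=\int_{\partial B_r}w\,w_\nu$, compute $H'(r)$, differentiate $I$ via a Rellich identity, and use Cauchy--Schwarz on the leading part of $(\log N)'$. The only genuine difference is organizational, and it is precisely where you anticipate trouble.

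You propose to prove $N'(r)\geq -C_1-C_2N(r)$ on all of $(0,r_0]$ and then Gronwall. The paper instead restricts from the outset to the open set $\Omega_{r_0}=\{r\in(0,r_0]:N(r)>\max\{1,N(r_0)\}\}$; on its complement the bound $N(r)\leq\max\{1,N(r_0)\}$ is free, so only $r\in\Omega_{r_0}$ matters. This single move simultaneously resolves both obstacles you flag. First, on $\Omega_{r_0}$ one has $H(r)<rI(r)$, so $I(r)>0$ and $\int_{B_r}|\nabla w|^2\leq CI(r)$, $\int_{B_r}w^2\leq Cr^2I(r)$ follow immediately from the basic lower-order estimate (your ``Poincar\'e-type'' step). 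Second, the delicate boundary term $\int_{\partial B_r}w(\vv{b}\nabla u+cu)$ is controlled via the chain $I(r)^2\leq H(r)\int_{\partial B_r}w_\nu^2\leq rI(r)\int_{\partial B_r}w_\nu^2$, hence $I(r)/r\leq\int_{\partial B_r}w_\nu^2$, which fed back into the $I'$ identity yields $\int_{\partial B_r}|\nabla w|^2\leq C\int_{\partial B_r}w_\nu^2$ directly, with no Young-absorption or second Rellich pass. Your route works too, but the bookkeeping you worry about is real; the $\Omega_{r_0}$ trick buys a much cleaner closure of the estimates at the cost of no extra ideas.
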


The proof of this theorem follows closely to that of \cite[Theorem 4.1]{GL}, and hence, we are only going to mention the modification and omit the redundant details. 

It is important to notice that 
\begin{equation}\label{eq:basic-W12}
\int_{B_r}w^2 \leq \frac{2r}{n}\int_{\p B_r}w^2 + \frac{4r^2}{n^2}\int_{B_r}|\nabla w|^2,
\end{equation}
which is indeed true for any element in $W^{1,2}(B_1)$.

\begin{lemma}\label{lemma:D:IH} Under the assumption of Theorem \ref{theorem:almgren}, we have 
\begin{equation}\label{eq:D:IH}
\left|\int_{B_r}w(\vv{b}\nabla u + cu)\right|\leq \frac{1}{2}\int_{B_r}|\nabla w|^2 + CrH(r),
\end{equation}
for any $0<r\leq r_0$.
\end{lemma}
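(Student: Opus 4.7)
The plan is to estimate the integrand pointwise using the structural relations \eqref{eq:u:w} between $u$, $w$ and their gradients, and then convert all resulting $w^2$ mass to a boundary quantity via the Poincaré-type inequality \eqref{eq:basic-W12}, absorbing leftover gradient mass into the $\tfrac12\int_{B_r}|\nabla w|^2$ term.

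More concretely, first I would combine the $L^\infty$ bounds on $\vv{b},c$ with \eqref{eq:u:w} to obtain the pointwise estimate
\begin{equation*}
|w(\vv{b}\nabla u+cu)|\le \kappa|w|\bigl(|\nabla u|+|u|\bigr)\le \kappa\Lambda\,|w||\nabla w|+\kappa\bigl(\Lambda+\lambda^{-1}\bigr)w^2
\end{equation*}
a.e. in $B_r$. Then I would apply Young's inequality with a small parameter $\varepsilon>0$ to the mixed term, giving
\begin{equation*}
\kappa\Lambda\,|w||\nabla w|\le \varepsilon|\nabla w|^2+\frac{\kappa^2\Lambda^2}{4\varepsilon}w^2,
\end{equation*}
so that after integrating over $B_r$ I arrive at
\begin{equation*}
\Bigl|\int_{B_r}w(\vv{b}\nabla u+cu)\Bigr|\le \varepsilon\int_{B_r}|\nabla w|^2+C_\varepsilon\int_{B_r}w^2
\end{equation*}
for a constant $C_\varepsilon$ depending only on $n,\lambda,\Lambda,\kappa$ and $\varepsilon$.

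Next I would bring in the ``cheap'' Poincaré inequality \eqref{eq:basic-W12}, valid for any $W^{1,2}$ function, to rewrite the bulk $w^2$ integral as
\begin{equation*}
\int_{B_r}w^2\le \frac{2r}{n}\,H(r)+\frac{4r^2}{n^2}\int_{B_r}|\nabla w|^2.
\end{equation*}
Substituting this into the previous bound yields
\begin{equation*}
\Bigl|\int_{B_r}w(\vv{b}\nabla u+cu)\Bigr|\le \Bigl(\varepsilon+\frac{4C_\varepsilon r^2}{n^2}\Bigr)\int_{B_r}|\nabla w|^2+\frac{2C_\varepsilon r}{n}H(r).
\end{equation*}
Taking $\varepsilon=\tfrac14$ and restricting to $r\le r_0$ with $r_0$ chosen so that $4C_{1/4}r_0^2/n^2\le \tfrac14$ makes the gradient coefficient no larger than $\tfrac12$, and the remaining $H(r)$ coefficient has the required form $Cr$, completing the proof.

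The argument is essentially routine; the only thing requiring any care is the order in which the bounds are applied — I must use Young before invoking \eqref{eq:basic-W12}, because that auxiliary inequality itself reintroduces a gradient term, and I need $r_0$ small enough (depending only on $n,\lambda,\Lambda,\kappa$) for the eventual absorption into $\tfrac12\int_{B_r}|\nabla w|^2$ to go through. No genuinely hard step is anticipated.
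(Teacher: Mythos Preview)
Your proposal is correct and follows essentially the same route as the paper: both combine the structural bounds \eqref{eq:u:w} with Young's inequality to reduce to $\varepsilon\int_{B_r}|\nabla w|^2+C_\varepsilon\int_{B_r}w^2$, then invoke \eqref{eq:basic-W12} and choose $\varepsilon$ and $r_0$ to absorb the gradient terms. Your write-up is in fact slightly cleaner and more explicit than the paper's sketch.
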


\begin{proof} By a generalized Young's inequality, \eqref{eq:u:w} and \eqref{eq:basic-W12}, one may prove that 
\begin{equation*}
\left|\int_{B_r}w(\vv{b}\nabla u + cu)\right| \leq \left(\frac{\e}{\lambda^2} + C_\e r^2\right)\int_{B_r}|\nabla w|^2 + C_\e rH(r),
\end{equation*}
where $C_\e$ is a positive constant depending only on $n$, $\lambda$, $\Lambda$, $\kappa$ and $\e$. Set $\e = \lambda^2/4$, and correspondingly choose $r_0\leq 1$ such that $4C_{\lambda^2/4}r_0\leq 1$, which will make the proof complete. 
\end{proof}

An immediate corollary is that for a nontrivial solution $w$ to \eqref{eq:w-alm}, we have 
\begin{equation*}
H(r) \neq 0\quad\text{for any }0<r\leq r_0,
\end{equation*}
where $r_0$ is chosen as in Lemma \ref{lemma:D:IH}, which also implies that $r\mapsto N(r)$ is absolutely continuous in $(0,r_0]$. Thus, the set $\Omega_{r_0}$, defined by,
\begin{equation*}
\Omega_{r_0} =  \{r\in r_0 : N(r) > \max\{1,N(r_0)\}\},
\end{equation*} 
is an open subset of $\R$, which can be represented in a countable disjoint intervals.

\begin{corollary}\label{corollary:D:IH} For any $r\in\Omega_{r_0}$, 
\begin{equation}\label{eq:D:I}
\int_{B_r}|\nabla w|^2 \leq CI(r)\quad\text{and}\quad \int_{B_r}w^2 \leq Cr^2I(r).
\end{equation}
\end{corollary}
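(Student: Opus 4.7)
The plan is to combine Lemma \ref{lemma:D:IH} with the defining property of $\Omega_{r_0}$ (namely $N(r) > 1$) to absorb the cross terms in $I(r)$, and then to apply the elementary bound \eqref{eq:basic-W12} to pass from the gradient estimate to the $L^2$ estimate.

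First, I would rewrite the definition of $I(r)$ as
\begin{equation*}
\int_{B_r}|\nabla w|^2 = I(r) - \int_{B_r} w(\vv{b}\nabla u + cu),
\end{equation*}
and invoke Lemma \ref{lemma:D:IH} to estimate the second term. This yields
\begin{equation*}
\int_{B_r}|\nabla w|^2 \leq I(r) + \frac{1}{2}\int_{B_r}|\nabla w|^2 + C r H(r),
\end{equation*}
so that, after absorbing the gradient term on the left,
\begin{equation*}
\int_{B_r}|\nabla w|^2 \leq 2 I(r) + 2 C r H(r).
\end{equation*}

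Next, I would use that $r \in \Omega_{r_0}$ forces $N(r) > 1$, i.e.\ $H(r) < r I(r)$. Substituting this and using $r \leq r_0 \leq 1$ (with $r_0$ as in Lemma \ref{lemma:D:IH}), we obtain
\begin{equation*}
\int_{B_r}|\nabla w|^2 \leq 2 I(r) + 2 C r^2 I(r) \leq C' I(r),
\end{equation*}
which is the first estimate in \eqref{eq:D:I}. Here it is important to note that on $\Omega_{r_0}$ we also have $I(r) > 0$, so there is no sign issue.

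For the second inequality, I would simply apply \eqref{eq:basic-W12} together with the bound $H(r) < r I(r)$ and the gradient estimate just derived:
\begin{equation*}
\int_{B_r} w^2 \leq \frac{2r}{n} H(r) + \frac{4 r^2}{n^2}\int_{B_r}|\nabla w|^2 \leq \frac{2 r^2}{n} I(r) + \frac{4 C' r^2}{n^2} I(r) \leq C r^2 I(r).
\end{equation*}
There is no real obstacle in this argument; the only subtlety is to make sure that one always works on $\Omega_{r_0}$ where $N(r) > 1$ is available (so that $H(r)$ can be controlled by $r I(r)$), and that $r_0$ has been chosen small enough (as in Lemma \ref{lemma:D:IH}) so that the absorption of the gradient term on the left-hand side is legitimate.
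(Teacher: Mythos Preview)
Your argument is correct and is exactly the approach the paper intends: you use $N(r)>1$ on $\Omega_{r_0}$ to replace $H(r)$ by $rI(r)$, combine this with Lemma~\ref{lemma:D:IH} to absorb the cross term and get the gradient bound, and then feed both into \eqref{eq:basic-W12} for the $L^2$ bound. The paper's proof is the same, only more terse.
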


\begin{proof} Notice that if $r\in \Omega_{r_0}$, then $N(r)>1$, which is equivalent to that $H(r) < rI(r)$. The rest of the proof follows easily from \eqref{eq:D:IH}, \eqref{eq:basic-W12}.
\end{proof}

In order to use the Rellich formula, we need $W^{2,2}$ regularity of $w$. 
\begin{lemma}\label{lemma:W22} One has $w\in W_{loc}^{2,2}(B_1)$ and 
\begin{equation*}
\norm{w}_{W^{2,2}(V)}\leq C_{V}\norm{w}_{W^{1,2}(B_1)},
\end{equation*}
for any compact subset $V$ of $B_1$.
\end{lemma}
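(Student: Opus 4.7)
The plan is to recast \eqref{eq:w-alm} as a Poisson equation with $L^2_{loc}$ right-hand side and then invoke the classical interior $W^{2,2}$ estimate for $-\Delta$. Specifically, I would rewrite \eqref{eq:w-alm} as
\begin{equation*}
-\Delta w = g \quad \text{in } B_1, \qquad g := -\vv{b}\cdot \nabla u - c\, u,
\end{equation*}
in the weak sense. The comparison assumption \eqref{eq:u:w}, together with $\vv{b},c \in L^\infty(B_1)$ with norm bounded by $\kappa$, gives the pointwise estimate
\begin{equation*}
|g| \leq \kappa\bigl(|\nabla u| + |u|\bigr) \leq \kappa \Lambda \bigl(|\nabla w| + |w|\bigr) + \frac{\kappa}{\lambda}|w|,
\end{equation*}
a.e.\ in $B_1$. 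Hence $g \in L^2(B_1)$ with $\|g\|_{L^2(B_1)} \leq C \|w\|_{W^{1,2}(B_1)}$ for some constant $C = C(\lambda,\Lambda,\kappa)$.

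Next I would apply the standard interior $W^{2,2}$ regularity for the Laplacian: for every $V \Subset B_1$, any weak solution of $-\Delta w = g$ in $B_1$ with $g \in L^2(B_1)$ satisfies $w \in W^{2,2}(V)$ together with the Caccioppoli-type bound
\begin{equation*}
\|w\|_{W^{2,2}(V)} \leq \tilde C_V \bigl(\|g\|_{L^2(B_1)} + \|w\|_{L^2(B_1)}\bigr),
\end{equation*}
where $\tilde C_V$ depends only on $n$ and $\dist(V, \partial B_1)$. This is proved in the standard way via the method of difference quotients applied to the weak formulation, testing with $\eta^2 \partial_k^{-h}(\partial_k^h w)$ for a cutoff $\eta$ supported between $V$ and $B_1$.

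Combining the two bounds yields
\begin{equation*}
\|w\|_{W^{2,2}(V)} \leq \tilde C_V \bigl(C \|w\|_{W^{1,2}(B_1)} + \|w\|_{L^2(B_1)}\bigr) \leq C_V \|w\|_{W^{1,2}(B_1)},
\end{equation*}
with $C_V$ depending only on $n$, $\lambda$, $\Lambda$, $\kappa$ and $\dist(V,\partial B_1)$, which is the asserted estimate. There is no real obstacle here; the only mildly delicate point is ensuring that the ``lower-order'' term $\vv{b}\nabla u$ is controlled in $L^2$, which is precisely what the second half of \eqref{eq:u:w} is designed to guarantee.
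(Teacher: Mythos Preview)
Your argument is correct. The paper's own proof is a one-line deferral to ``the standard Bernstein technique'', so there is essentially nothing to compare against in detail; your route --- rewriting \eqref{eq:w-alm} as $-\Delta w=g$ with $g\in L^2(B_1)$ controlled via \eqref{eq:u:w}, and then invoking the interior $W^{2,2}$ estimate for the Laplacian through difference quotients --- is a standard and complete implementation of exactly the kind of argument the authors have in mind. The only terminological difference is that ``Bernstein technique'' usually connotes differentiating the equation and working with the PDE satisfied by $|\nabla w|^2$ (or by $\partial_k w$), whereas you phrase it via Nirenberg's difference-quotient method; for linear equations with $L^\infty$ lower-order terms these two approaches yield the same $W^{2,2}_{loc}$ estimate, and your version has the mild advantage of requiring no a priori smoothness to justify the differentiation.
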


\begin{proof} The proof follows by the standard Bernstein technique, which we leave out to the reader.
\end{proof}

We are now ready to proof the almost monotonicity of Almgren's frequency formula.

\begin{proof}[Proof of Theorem \ref{theorem:almgren}] By the definition of $\Omega_{r_0}$, we may only prove the statement for $r\in\Omega_{r_0}$.

Let us proceed with the computaton of $I'(r)$. By Lemma \ref{lemma:W22}, we may use the Rellich formula on the computation of $\int_{\p B_r}|\nabla w|^2$, and owing to \eqref{eq:w-alm}, we arrive at
\begin{equation}\label{eq:I'}
\begin{split}
I'(r) &= \frac{n-2}{r}I(r) + 2\int_{\p B_r}w^2 \\
&\quad + \int_{\p B_r}w(\vv{b}\nabla u + cu) - \frac{n}{r}\int_{B_r}w(\vv{b}\nabla u + cu).
\end{split}
\end{equation}
It follows from \eqref{eq:D:IH} and \eqref{eq:D:I} that
\begin{equation}\label{eq:I'-1}
I'(r) = \left(\frac{n-2}{r}+O(1)\right)I(r) + 2\int_{\p B_r}w_r^2 + \int_{\p B_r}w(\vv{b}\nabla u + cu),
\end{equation}
where by $O(1)$ we denote a quantity which is bounded by a constant depending only on $n$, $\lambda$, $\Lambda$ and $\kappa$.

In the sequel, one may follow the proof of \cite[Theorem 4.1]{GL}, as long as we have 
\begin{equation}\label{eq:wb}
\begin{split}
\left|\int_{\p B_r}w(\vv{b}\nabla u + cu)\right|\leq  CI(r) + C\left(\int_{\p B_r}w^2\int_{\p B_r}w_r^2\right)^{1/2}.
\end{split}
\end{equation}
However, it is an easy consequence of the choice of $r\in \Omega_{r_0}$. Indeed, from the weak formulation of \eqref{eq:w-alm}, we have 
\begin{equation*}
I(r)^2 = \left(\int_{\p B_r} ww_r\right)^2 \leq \int_{\p B_r}w^2\int_{\p B_r}w_r^2 \leq rI(r) \int_{\p B_r}w_r^2,
\end{equation*}
where in deriving the last inequality we have used that $H(r) \leq rI(r)$ for $r\in\Omega_{r_0}$. Inserting this inequality into \eqref{eq:I'-1}, we arrive at
\begin{equation}\label{eq:Dw:wr}
\int_{\p B_r}|\nabla w|^2 \leq C\int_{\p B_r}w_r^2\quad (0< r\in\Omega_{r_0}),
\end{equation}
from which \eqref{eq:wb} follows immediately. 

In order to reduce the redundancy to the existing literature, we skip the details of what is left in the proof.
\end{proof}


\begin{thebibliography}{}

\bibitem[A]{A} Aramaki, J. {\it Estimate of the Hausdorff measure of the singular set of a solution for a semi-linear elliptic equation associated with superconductivity}. Arch. Math. (Brno) {\bf 46}(3) (2010), 185-201. 
\bibitem[B]{B} Ber, L. {\it Local behavior of solutions of general linear elliptic equations}. Commun. Pure Appl. Math. {\bf 8}(4) (1955), 473-496.  
\bibitem[CF]{CF} Caffarelli, L.A.; Friedman, A. {\it Partial regularity of the zero-set of solutions of linear and superlinear elliptic equations}. J. Differ. Equ. {\bf 60} (1985), 420-433.
\bibitem[CNV]{CNV} Cheeger, J.; Naber, A.; Valtorta, D. {\it Critical sets of elliptic equations}. Commun. Pure Appl. Math. {\bf 68}(2) (2015), 173-209.
\bibitem[GL]{GL} Garofalo, N.; Lin, F.-H. {\it Unique continuation for elliptic operators: a geometric-variational approach}. Commun. Pure Appl. Math. {\bf 40} (1987), 347-366.
\bibitem[GT]{GT} Gilbarg, D.; Trudinger, N. S. {\it Elliptic partial differential equations of second order}. Springer-Verlag, Berlin-Heidelberg (1983)
\bibitem[H]{H} Han, Q. {\it Singular sets of solutions to elliptic equations}. Indiana Univ. Math. J. {\bf 43}(3) (1994), 983-1002.
\bibitem[HL]{HL} Han, Q.; Lin, F. H. {\it Nodal sets of solutions of elliptic partial differential equations}. Books available on Han?s homepage (2013).
\bibitem[HHHN]{HHHN} Hardt, R.; Hoffman-Ostenhof, M.; Hoffman-Ostenhof, T.; Nadirashvili, N. {\it Critical sets of solutions of elliptic equations}. J. Differ. Geom. {\bf 51} (1999), 359-373. 
\bibitem[HS]{HS} Hardt, S.; Simon, L. {\it Nodal sets for solutions of elliptic equations}. J. Differ. Geom. {\bf 30} (1989), 505-522.
\bibitem[HHN]{HHN} Hoffman-Ostenhof, M.; Hoffman-Ostenhof, T.; Nadirashvili, N. {\it Critical sets of smooth solutions of elliptic equations in dimension 3}. Indiana Univ. Math. J. {\bf 45} (1996), 15-37. 
\bibitem[KLS]{KLS} Kim, S.; Lee, K.; Shahgholian, H. {\it An elliptic free boundary arising from the jump of conductivity}. arXiv:1605.06558v2 (2016)
\bibitem[L]{L} Lin, F.-H. {\it Nodal sets of solutions of elliptic and parabolic equations}. Commun. Pure Appl. Math. {\bf 44} (1991), 287-308.
\bibitem[M]{M} Miller, K. {\it Nonunique continuation for uniformly parabolic and elliptic equations in self-adjoint divergence form with Holder continuous coefficients}. Arch. Ration. Mech. Anal. {\bf 54}(2) (1974),105-117.
\bibitem[P]{P} Pli\'{s}, E. {\it On non-uniqueness in Cauchy problem for an elliptic second order differential equation}. Bull. Acad. Polonaise Sci. {\bf 21} (1960), 1-92. 
  
  
\end{thebibliography}
\end{document}